\numberwithin{equation}{section}
\newtheorem{theorem}{Theorem}[section]
\newtheorem{lemma}{Lemma}[section]
\newtheorem{remark}{Remark}[section]
\newtheorem{definition}{Definition}[section]
\newtheorem{example}{Example}[section]
\newtheorem{proposition}{Proposition}[section]
\journal{arXiv}
\begin{document}

\newcommand\tbbint{{-\mkern -16mu\int}}
\newcommand\tbint{{\mathchar '26\mkern -14mu\int}}
\newcommand\dbbint{{-\mkern -19mu\int}}
\newcommand\dbint{{\mathchar '26\mkern -18mu\int}}
\newcommand\bint{
{\mathchoice{\dbint}{\tbint}{\tbint}{\tbint}}
}
\newcommand\bbint{
{\mathchoice{\dbbint}{\tbbint}{\tbbint}{\tbbint}}
}

\begin{frontmatter}



\title{{\bf Global existence and nonexistence analyses for a magnetic fractional pseudo-parabolic equation}}

\author[1]{Jiazhuo Cheng}
\ead{chengjzh5@mail2.sysu.edu.cn}
\author[1,2]{Qiru Wang\corref{cor1}
}
\ead{mcswqr@mail.sysu.edu.cn}

\cortext[cor1]{Corresponding author}
\address[1]{School of Mathematics, Sun Yat-sen University, Guangzhou 510275, Guangdong, PR China}
\address[2]{School of Mathematics(Zhuhai), Sun Yat-sen University, Zhuhai 519082, Guangdong, PR China}




\begin{abstract}
In this paper, we study the initial-boundary value problem for a pseudo-parabolic equation in magnetic fractional Orlicz-Sobolev spaces. First, by employing the imbedding theorems, the theory of potential wells and the Galerkin method, we prove the existence  and uniqueness of global solutions with subcritical initial energy, critical initial energy and supercritical initial energy, respectively. Furthermore, we prove the decay estimate of global solutions with sub-sharp-critical initial energy, sharp-critical initial energy and supercritical initial energy, respectively. Specifically, we need to analyze the properties of $\omega$-limits of solutions for supercritical initial energy. Next, we establish the finite time blowup of solutions with sub-sharp-critical initial energy and sharp-critical initial energy, respectively.  Finally, we discuss the convergence relationship between the global solutions of the evolution problem and the ground state solutions of the corresponding stationary problem.
\end{abstract}

\begin{keyword}
A magnetic fractional pseudo-parabolic equation; global existence, uniqueness and asymptotic behavior; finite time blowup; ground state solutions; the imbedding theorems, the theory of potential wells and the Galerkin method



\MSC[2020] 35R11, 26A33, 35K58, 35B44, 35D30

\end{keyword}

\end{frontmatter}

\newpage

\tableofcontents

\newpage


\section{Introduction}
\label{section1}

In this paper, we consider the following initial boundary value problem of fractional  non-linear  pseudo-parabolic equation
\begin{equation}\label{1.1}
\left\{\begin{split}
&u_t+\left(-\Delta\right)^s u_t+\left(-\Delta_g^A\right)^su=\left|u\right|^{p-1}u,\;\;\;\;\;\;\;\;\;\;\;\;\;\;\;\;\;0<t<T,x\in\Omega,\\
&u\left(x,t\right)=0,\;\;\;\;\;\;\;\;\;\;\;\;\;\;\;\;\;\;\;\;\;\;\;\;\;\;\;\;\;\;\;\;\;\;\;\;\;\;\;\;\;\;\;\;\;\;\;\;\;0<t<T,\;x\in\mathbb{R}^N\backslash\Omega,\\
&u\left(x,0\right)=u_0(x),\;\;\;\;\;\;\;\;\;\;\;\;\;\;\;\;\;\;\;\;\;\;\;\;\;\;\;\;\;\;\;\;\;\;\;\;\;\;\;\;\;\;\;\;\;\;\;\;\;\;\;\;\;\;\;\;\;\;\;\;\;\;\;\;\;\;\;x\in\Omega,
\end{split}\right.
\end{equation}
where $\Omega\subset\mathbb{R}^N\left(N\geq1\right)$ is a bounded domain with smooth boundary, $T\in\left(0,\infty\right]$. The fractional operators $\left(-\Delta\right)^s$ and $\left(-\Delta_g^A\right)^s$ are defined as
\begin{equation*}
\left(-\Delta\right)^su\left(x\right):=2\mathrm P.\mathrm V.\int_{\mathbb{R}^N}\frac{u\left(x\right)-u\left(y\right)}{\left|x-y\right|^{N+2s}}\mathrm dy,\;\;\;x\in\mathbb{R}^N,
\end{equation*}
and
\begin{equation}\label{1.2}
\left(-\Delta_g^A\right)^su\left(x\right):=2\mathrm P.\mathrm V.\int_{\mathbb{R}^N}\frac{g\left(\left|D_s^{A}u\left(x, y\right)\right|\right)}{\left|D_{s}^{A}u\left(x,y\right)\right|}D_{s}^{A}u\left(x,y\right)\frac{\mathrm{d}y}{\left|x-y\right|^{ N+s}},\;\;\;x\in\mathbb{R}^{N},
\end{equation}
where $A:\mathbb{R}^N\rightarrow\mathbb{R}^N$ is a given vector field, $s\in\left(0,1\right)$, $u:\mathbb{R}^N\rightarrow\mathbb{C}$ is a smooth function, $\mathrm P.\mathrm V.$ stands for ``in principal value", $g=G'$ is the derivative of an $N$-function (see Section 2 for details regarding $N$-functions), and
\begin{equation*}
  D_s^Au\left(x,y\right)=\frac{u\left(x\right)-e^{i\left(x-y\right)\cdot A\left({\frac{x+y}2}\right)}u\left(y\right)}{\left|x-y\right|^s}.
\end{equation*}

The operator $\left(-\Delta_g^A\right)^s$ is called the magnetic fraction $g$-Laplacian operator, which appears as a gradient of the non-local magnetic energy function
\begin{equation*}
  \rho_{s,G}^A\left(u\right):=\int_{\mathbb{R}^{2N}}G\left(\left|D_s^Au\left(x,y\right)\right|\right)\mathrm{d}\mu,
\end{equation*}
i.e.,
\begin{equation*}
\begin{split}
&\lim_{t\rightarrow0}\frac{\rho_{s,G}^A\left(u+tv\right)-\rho_{s,G}^A\left(u\right)}t=\Re\left(\int_{\mathbb{R}^{2N}}\frac{g\left(\left|D_s^Au\left(x,y\right)\right|\right)}{\left|D_s^Au\left(x,y\right)\right|}D_s^Au\left(x,y\right)\overline{D_s^Av\left(x,y\right)}\mathrm{d}\mu\right)\\
&\;\;\;\;\;\;\;\;\;\;\;\;\;\;\;\;\;\;\;\;\;\;\;\;\;\;\;\;\;\;\;\;\;\;\;\;\;\;\;\;\;=\left\langle\left(-\Delta_g^A\right)^su,v\right\rangle,
\end{split}
\end{equation*}
where $\Re\left(z\right)$ denotes its real part for $z\in\mathbb{C}$, $\mathrm d\mu=\mathrm{d}x\mathrm{d}y/\left|x-y\right|^N$. On the other hand, the operator $\left(-\Delta_g^A\right)^s$ can be seen as an approximate of local operator $-\Delta _{\overline g}^A$ appearing as a gradient of the energy functional
\begin{equation*}
I_{\overline G}^A\left(u\right)=\int_{\mathbb{R}^N}\overline G\left(\left|\nabla u-iAu\right|\right)\mathrm{d}x,
\end{equation*}
where $\overline g={\overline G}'$ and $\overline G$ is the spherical limit of $G$ (see \cite{Bonder2}). For example, the spherical limit of $G$ is $\overline G\left(t\right)=K_{N,p}\left|t\right|^p$ when $G\left(t\right)=\left|t\right|^p$ and $p>1$, where $K_{N,p}:=\int_{\mathbb{S}^{N-1}}\left|x_n\right|^p\mathrm dp$.
For more examples, please refer to \cite{Bonder}. The approximation
is in the following sense (see \cite[Theorem 6.17]{Bonder}): let $u_s$ be the unique solution of
\begin{equation*}
  \left(-\Delta_g^A\right)^su=f\;\;\;\mathrm{in}\;\Omega\subset\mathbb{R}^N,\;u=0\;\;\;\mathrm{on}\;\mathbb{R}^N\backslash\Omega
\end{equation*}
for $s\in\left(0,1\right)$ and  appropriate $f$, then $u_s\rightarrow u$ in $L^G\left(\Omega\right)$ for $s\rightarrow 1$, and $u$ is the weak solution to
\begin{equation*}
-\Delta_{\overline g}^Au=f\;\;\;\mathrm{in}\;\Omega\subset\mathbb{R}^N,\;u=0\;\;\;\mathrm{on}\;\partial\mathrm\Omega.
\end{equation*}
When $G\left(t\right)=\left|t\right|^2$, the derivative of the corresponding energy functional $I_{\overline G}^A$ generates the (real part of the) magnetic Schr$\mathrm{\ddot{o}}$dinger operator, defined as
\begin{equation*}
-\left(\nabla-iA\right)^2u\left(x\right)=-\Delta u\left(x\right)+2iA\left(x\right)\cdot\nabla u+\left|A\left(x\right)\right|^2u+iu\mathrm{div}A\left(x\right).
\end{equation*}
The magnetic Schr$\mathrm{\ddot{o}}$dinger operator has been extensively studied in the past few decades, see \cite{Bartsch,Cingolani,Schindler,Arioli,Pablo}.

In the case the magnetic field $A=0$ in (\ref{1.2}), $\left(-\Delta_g^{A}\right)^su\left(x\right)$ is abbreviated as $\left(-\Delta_g\right)^su\left(x\right)$ (see \cite{Bonder}), which is defined as
\begin{equation*}
\left(-\Delta_g\right)^su\left(x\right):=C_{n,s}\mathrm P.\mathrm V.\int_{\mathbb{R}^N}g\left(\frac{\left|u\left(x\right)-u\left(y\right)\right|}{\left|x-y\right|^s}\right)\frac{u\left(x\right)-u\left(y\right)}{\left|u\left(x\right)-u\left(y\right)\right|}\frac{\mathrm dy}{\left|x-y\right|^{N+s}},\;\;\;x\in\mathbb{R}^N,
\end{equation*}
where $u$ is a real function and sufficiently smooth. The  fractional $g$-Laplacian $\left(-\Delta_g\right)^su\left(x\right)$ generalizes a large class of familiar non-local operators. For instance, when $g\left(t\right)=t^{p-1}$, then $\left(-\Delta_g\right)^su\left(x\right)$ becomes the  \emph{fractional $p$-Laplace operator} $\left(-\Delta_p\right)^su\left(x\right)$, see \cite{Nezza,Puhst,Pana,Liao,Cheng1}. Furthermore, if $g\left(t\right)=t$, then $\left(-\Delta_g\right)^su\left(x\right)$  reduces to the well known \emph{fractional Laplace operator}  $\left(-\Delta\right)^su\left(x\right)$,  we refer the reader to the following list \cite{Stein,Landkof,Laskin,Ambrosio,Daoud} and to the references  \cite{Bertoin,Laskin2,Metzler,Gilboa,Caffarelli} for the physical background. Non-local operators can appear in modeling different physical situations such as anomalous diffusion and quasi-geostrophic flow, turbulence and water waves, molecular dynamics and relativistic quantum mechanics of stars (see \cite{Bouchaud,Constantin}), as well as appear in mathematical finance \cite{Applebaum}, elasticity problems \cite{Signorini}, phase transition problems \cite{Alberti}, and crystal dislocation structures \cite{Toland}. Usually, non-local generalizations of Laplacian (linear and nonlinear) are tested using smooth kernels, and fractional Laplacian is represented as integral operator on the entire $\mathbb{R}^N$. In this work, we focus on non-local operators acting on bounded domains. These bounded domains correspond to regional fractional Laplacian,  which can be interpreted  as a non-local version of the Laplacian equipped with Dirichlet boundary conditions, see \cite{Puhst}. Here, the operator (\ref{1.2})  constitutes a fractional relativistic generalization of magnetic Laplacian, see \cite{Ichinose,Ichinose2,Avenia}.

Problem (\ref{1.1}) describes a variety of important physical processes, such as the unidirectional propagation of nonlinear, dispersive, long waves \cite{Benjamin,Ting} and the aggregation of population \cite{Padron}. The classic pseudo-parabolic equation is as follows
\begin{equation}\label{pseudo-parabolic equation}
u_t-\Delta u_t=F\left(x,t,\nabla u,\Delta u\right),
\end{equation}
which has been investigated by Showalter and Ting \cite{Showalter} for $F\left(x,t,\nabla u,\Delta u\right)=\Delta u$,  and they proved the existence and uniqueness of solutions. From 2013 to 2018, the authors in \cite{Xu2,Xu1,Liu3,Xu3} considered the initial boundary value problem  (\ref{pseudo-parabolic equation}) with $F\left(x,t,\nabla u,\Delta u\right)=\Delta u+u^p$, and proved the global existence and asymptotic behavior of solutions with subcritical and critical initial energy $J(u_0)\leq d$, and also gave the global nonexistence of solutions under supercritical initial energy $J(u_0)>d$ through comparison principle; further, the upper bound of the blowup time of supercritical initial energy was estimated. Due to the fact that $u(x,y)$ is not required to be non-negative in mathematics, some authors \cite{Wang,Cheng,Cheng1,Cheng2}  replace $u^q$ with $\left|u\right|^{q-1}u$ in the problem (\ref{pseudo-parabolic equation}) (see \cite{Hu}). In 2023, Cheng and Wang \cite{Cheng1} studied the following fractional  pseudo-parabolic equation
\begin{equation*}
u_t+\left(-\Delta\right)_p^su+\left(-\Delta\right)^su_t=\left|u\right|^{q-1}u-\bbint_\Omega\left|u\right|^{q-1}u\mathrm dx
\end{equation*}
and proved the existence, uniqueness and decay estimate of global solutions and the blowup phenomena of solutions with subcritical initial energy and critical initial energy by constructing a family of potential wells (see \cite{Sattinger,Liu1,Liu2}), and established the existence, uniqueness and asymptotic behavior of global solutions with supercritical initial energy by further analyzing the properties of $\omega$-limits of solutions.

In addition to the above special conditions that the global solutions converging to $0$ as $t\rightarrow\infty$ when $u_0$ satisfies some special conditions, recently, the  researchers in \cite{Zhou,Cao2} further studied the asymptotic behavior of general global solutions and found that it is related to the ground state solution of stationary problems. In 2021, Cao and Zhao  \cite{Cao2}  studied the following pseudo-parabolic equation
\begin{equation*}
  u_t-\Delta u_t-M(\left\|\nabla u\right\|_p^p)\mathrm{div}(\vert\nabla u\vert^{p-2}\nabla u)=\vert u\vert^{q-1}u-\frac1{\vert\Omega\vert}\;\int_\Omega\;\vert u\vert^{q-1}u \mathrm{d}x,
\end{equation*}
proved the convergent relationship between the global solutions of the evolution problem and the ground state solutions of the corresponding stationary problem when $p\geq2,\;2p-1<q<p^\ast-1,\;M(s)=a+bs\;\mathrm{with}\;a>0\;\mathrm{and}\;b>0.$

Contrary to the classical fractional Laplacian Sch$\ddot{\mathrm o}$dinger equation that is widely investigated, the situation seems to be in a developing state when the new fractional $g$-Laplacian is presented. In this context, the natural setting for studying problem (\ref{1.1}) is fractional Orlicz-Sobolev spaces. Currently, as far as we know, some results for fractional Orlicz-Sobolev spaces and fractional $g$-Laplacian operator are obtained in \cite{Bonder,Bahrouni,Napoli,Bonder2,Bonder3,Salort}.  Therefore, we combine the potential wells theory with the Galerkin method to establish the global existence, asymptotic behavior, and finite time blowup of the solutions in fractional Orlicz-Sobolev spaces. Furthermore, we prove some convergence relations between the global solutions of (\ref{1.1}) and the ground state solutions of the following stationary problem
\begin{equation}\label{1.7}
\left\{\begin{split}
&\left(-\Delta_g^A\right)^su=\left|u\right|^{p-1}u,\;\;\;\;\;\;\;\;\;\;\;\;\;\;\;\;\;\;\;\;\;\;\;x\in\Omega,\\
&u=0,\;\;\;\;\;\;\;\;\;\;\;\;\;\;\;\;\;\;\;\;\;\;\;\;\;\;\;\;\;\;\;\;\;\;\;\;\;\;\;\;x\in\mathbb{R}^N\backslash\Omega.\\
\end{split}\right.
\end{equation}

This paper is organized as follows:

(i) In Section \ref{Preliminaries and main results}, we give some notations, necessary definitions and lemmas about the basic properties of the related functionals, sets and spaces.  We also  give the main assumptions and main results of this paper.

(ii) In Section \ref{Local solutions}, we prove the existence and uniqueness of local solutions by the Galerkin method and Contraction Mapping Principle.

(iii) In Section \ref{Global existence and uniqueness}, we prove the existence and uniqueness of global solutions with subcritical initial energy, critical initial energy and supercritical initial energy, respectively.

(iv) In Section \ref{Asymptotic behavior}, we discuss the decay estimate of global solutions with sub-sharp-critical initial energy, sharp-critical initial energy and supercritical initial energy, respectively.

(v) In Section \ref{Blowup}, we establish the blowup of weak solutions with sub-sharp-critical initial energy and sharp-critical initial energy, respectively.

(vi) In Section \ref{some convergence relations}, we establish convergence relations between the global solutions of (\ref{1.1}) and the ground state solutions of (\ref{1.7}).

(vii) In Section \ref{Conclusions}, we make some conclusions.

\section{Preliminaries and main results}\label{Preliminaries and main results}

\subsection{$N$-functions and basic properties}
Let's recall the definition of the $N$-function.
\begin{definition}[$N$-function]\label{$N$-function}
Function $G:\left[0,\infty\right)\rightarrow\mathbb{R}$ is called an $N$-function, if it can be written as
\begin{equation}\label{2.1.1}
  G\left(t\right)=\int_0^tg\left(\tau\right)\mathrm{d}\tau,
\end{equation}
where the function $g$ is positive for $t>0$, right-continuous for $t\geq0$, non-decreasing and $g\left(0\right)=0$ and $g\left(t\right)\rightarrow\infty$ as $t\rightarrow\infty$.
\end{definition}

From \cite{Krasnoselskii}, an $N$-function satisfies the following important properties (equivalent to the definition of the $N$-function):\\
(i) $G$ is even, continuous, convex, increasing and $G\left(0\right)=0$.\\
(ii) $G$ is super-linear at zero and at infinite, i.e., $\underset{x\rightarrow0}{\lim}\frac{G\left(x\right)}x=0$ and $\underset{x\rightarrow\infty}{\lim}\frac{G\left(x\right)}x=\infty$.

Another important property of the $N$-function $G$ satisfies the $\Delta_2$  condition as follows:
\begin{definition}\label{2 condition}
The $N$-function $G$ satisfies the $\Delta_2$  condition, if there exists $C>2$ such that
\begin{equation*}
G\left(2x\right)\leq CG\left(x\right)\;\;\;\mathrm{for}\;\mathrm{all}\;x\in{\mathbb{R}}_+,
\end{equation*}
where ${\mathbb{R}}_+$ represents the set of positive real numbers.
\end{definition}

\begin{example}[\cite{Krasnoselskii}]\label{example 1}
Some examples of functions satisfying Definition \ref{2 condition} are as follows:
\begin{equation*}
\begin{split}
&\left(i\right)\;G\left(t\right)=t^q,\;t\geq0,\;q>1;\\
&\left(ii\right)\;G\left(t\right)=t^q\left(\left|\log\left(t\right)\right|+1\right),\;t\geq0,\;q>1;\\
&\left(iii\right)\;G\left(t\right)=\left(1+\left|t\right|\right)\log\left(\left|t\right|+1\right)-\left|t\right|;\\
&\left(iv\right)\;G\left(t\right)=t^q\chi_{\left(0,1\right]}\left(t\right)+t^p\chi_{\left(1,\infty\right)}\left(t\right),\;t\geq0,\;q,p>1.
\end{split}
\end{equation*}
\end{example}

\begin{remark}\label{remark 1}
Without losing generality, $G$ can be normalized such that $G\left(1\right)=1$.
\end{remark}

As shown in \cite[Theorem 4.1, Chapter 1]{Krasnoselskii},  $G$ satisfies the $\Delta_2$ condition is equivalent to
\begin{equation}\label{2.1.1(1)}
\frac{tg\left(t\right)}{G\left(t\right)}\leq q^+,\;\;\;\forall t>0,
\end{equation}
for some $q^+>1$.

The $\Delta_2$ condition of the \emph{complementary function} of $G$ is defined as follows
\begin{equation}\label{2.1.2}
\widetilde G\left(s\right):=\sup_{t>0}\left\{st-G\left(t\right)\right\}.
\end{equation}
The following expression formula for $\widetilde G$ holds:
\begin{equation*}
\widetilde G\left(s\right)=\int_0^sg^{-1}\left(\tau\right)\mathrm d\tau.
\end{equation*}
Here, $g^{-1}$ represents the right-continuous inverse of the function $g$ that appears in (\ref{2.1.1}). In particular, if $G\left(t\right)=t^p\left(p>1\right)$, $\widetilde G$ plays the same role as the conjugate exponential function.

From (\ref{2.1.2}), it is directly concluded that the following Young-type inequality is established
\begin{equation*}
at\leq G\left(t\right)+\widetilde G\left(a\right)\;\;\;\mathrm{for}\;\mathrm{every}\;a,t\geq0.
\end{equation*}

From \cite[Theorem 4.3, Chapter 1]{Krasnoselskii}, a necessary and sufficient condition for the $N$-function $\widetilde G$ complementary to $G$ to satisfy the $\Delta_2$ condition  is as follows
\begin{equation*}
q^-\leq\frac{tg\left(t\right)}{G\left(t\right)},\;\;\;\forall t>0,
\end{equation*}
for $q^->1$.

Based on the above analysis and our calculation needs, we make the following strong assumptions about the $N$-function
\begin{equation}\label{2.1.4}
1<q^-\leq\frac{tg\left(t\right)}{G\left(t\right)}\leq q^+,\;\;\;\forall t>0.
\end{equation}
Then, $\widetilde G$ and $G$ both satisfy the $\Delta_2$ condition. By using $g\left(t\right)=-g\left(-t\right)$ for $t<0$, we can extend (\ref{2.1.4}) holding for all $t\in\mathbb{R},\;t\neq0$. Another assumption that used in this  manuscript is as follows
\begin{equation}\label{2.1.5}
q^--1\leq\frac{tg'\left(t\right)}{g\left(t\right)}\leq q^+-1,\;\;\;\forall t>0.
\end{equation}
Similar to (\ref{2.1.4}), we have (\ref{2.1.5}) established for all $t\in\mathbb{R},\;t\neq0$. It is easy to know that (\ref{2.1.5})  implies (\ref{2.1.4}). For example, the $N$-function $g\left(t\right)=t^{q-1}$ satisfies (\ref{2.1.5}) for $q^-\leq q\leq q^+$.

According to (\ref{2.1.4}) and (\ref{2.1.5}), we have the following comparison relation for power functions and $N$-functions
\begin{equation}\label{2.1.6}
\min\left\{a^{q^--1},a^{q^+-1}\right\}g\left(b\right)\leq g\left(ab\right)\leq\max\left\{a^{q^--1},a^{q^+-1}\right\}g\left(b\right),
\end{equation}
and
\begin{equation}\label{2.1.7}
\min\left\{a^{q^-},a^{q^+}\right\}G\left(b\right)\leq G\left(ab\right)\leq\max\left\{a^{q^-},a^{q^+}\right\}G\left(b\right),
\end{equation}
where $a>0,\;b>0$. The proof process of (\ref{2.1.6}) and (\ref{2.1.7}) can refer to \cite[Lemma 2.5]{Bonder}.

To end this subsection, we introduce one of the important symbols involved in embedding lemmas in Subsection 2.3.
\begin{definition}\label{$A$and$B$}
Let $A$ and $B$ be two $N$-functions, we say that $B$ is essentially stronger than $A$,  denoted
by $A\ll B$, if there exists $x_a\geq0$ for any $a>0$ such that
\begin{equation*}
A\left(x\right)\leq B\left(ax\right),\;x\geq x_a.
\end{equation*}
\end{definition}
Obviously Definition \ref{$A$and$B$} is equivalent to $\underset{t\rightarrow+\infty}{\lim}\frac{A\left(kt\right)}{B\left(t\right)}=0$ for any $k>0$ (see \cite[Theorem 2]{Rao}).

\subsection{Main assumptions}
Throughout this paper, let $C$ represent generic positive constant, which may change from line to line. We assume that  the real numbers $s$, $p$, $q$ and the $N$-function $G$ satisfy the following conditions:
\begin{equation*}
\begin{split}
&\left(H_1\right)\;0<s<1<q^-\leq q^+<p+1<q^{-\ast},\;\max\left\{\frac{tg\left(t\right)}{G\left(t\right)}-1,1\right\}<p,\;\forall t>0;\\
&\left(H_2\right)\;\int_0^1\frac{G^{-1}\left(t\right)}{t^{\frac{N+s}N}}\mathrm dt<\infty\;\mathrm{and}\;\int_1^{+\infty}\frac{G^{-1}\left(t\right)}{t^{\frac{N+s}N}}\mathrm dt=\infty,\;\mathrm{where}\;0<s<1;\\
&\left(H_3\right)\;\mathrm{The}\;\mathrm{function}\;t\rightarrow G\left(\sqrt t\right)\;\mathrm{is}\;\mathrm{convex};\\
\end{split}
\end{equation*}
where $q^-:=\underset{t>0}{\inf}\frac{tg\left(t\right)}{G\left(t\right)}$, $q^+:=\underset{t>0}{\sup}\frac{tg\left(t\right)}{G\left(t\right)}$, $q^{-\ast}:=\left\{\begin{array}{l}\frac{Nq^-}{N-sq^-},\;\;\;\mathrm{if}\;sq^-<N,\\\infty,\;\;\;\mathrm{if}\;sq^-\geq N.\end{array}\right.$

If $\left(H_2\right)$  is satisfied, the Sobolev conjugate $N$-function $G^\ast$ of $G$ can be defined as
\begin{equation}\label{Sobolev conjugate}
  \left(G^\ast\right)^{-1}\left(t\right)=\int_0^t\frac{G^{-1}\left(w\right)}{w^{\left(N+s\right)/N}}\mathrm dw.
\end{equation}
The above conjugate function will be applied to the embedding result in the  Orlicz-Sobolev spaces.
\begin{example}\label{example 2}
Some examples of functions that satisfy the above assumptions are as follows (see \cite[section 1, Remark 1.1]{Bahrouni} and \cite[subsection 2.2, Remark 1]{Pablo}):
\begin{equation*}
\begin{split}
&\left(\mathrm i\right)\;\mathrm{if}\;G\left(t\right)=t^p,\;p>1,\;\mathrm{then}\;(H_2)\;\mathrm{holds}\;\mathrm{for}\;sp<N\;\mathrm{and}\;(H_3)\;\mathrm{holds}\;\mathrm{for}\;p\geq2;\\
&\left(\mathrm{ii}\right)\;\mathrm{if}\;G\left(t\right)=t^p+t^q,\;p\geq q>1,\;\mathrm{then}\;(H_2)\;\mathrm{holds}\;\mathrm{for}\;sp<N\;\mathrm{and}\;(H_3)\;\mathrm{holds}\;\mathrm{for}\;q\geq2;\\
&\left(\mathrm{iii}\right)\;\mathrm{if}\;G\left(t\right)=\int_0^t\left(p\left|s\right|^{p-2}s\left(\left|log\left(s\right)\right|+1\right)+\frac{\left|s\right|^{p-2}s}{1+\left|s\right|}\right)\mathrm ds,\;\mathrm{then}\;(H_2)\;\mathrm{holds}\;\mathrm{for}\\
&s\left(p+1\right)<N\;\mathrm{and}\;(H_3)\;\mathrm{holds}\;\mathrm{for}\;q>2.\\
\end{split}
\end{equation*}
\end{example}

\subsection{Lebesgue and Orlicz-Sobolev spaces}
In this subsection, we shall introduce some definitions of Lebesgue and Orlicz-Sobolev spaces.

Let $\Omega$ be an open set in $\mathbb{R}^N$ and $Q=\left(\mathbb{R}^N\times\mathbb{R}^N\right)\backslash\left(\Omega^c\times\Omega^c\right),\;\Omega^c=\mathbb{R}^N\backslash\Omega$, then $\Omega\times\Omega$ is strictly contained in $Q$. For $1\leq p\leq\infty$, denote the $L^p\left(\Omega,\mathbb{C}\right)$ norm by ${\left\|\cdot\right\|}_p$.  Define the set
 \begin{equation*}
W_0^{s,2}\left(\Omega,\mathbb{C}\right):=\left\{\left.u\right|u\in L^2\left(\Omega,\mathbb{C}\right),\;u=0\;\mathrm{in}\;\Omega^c,\;\frac{\left|u(x)-u(y)\right|}{\left|x-y\right|^{\frac N2+s}}\in L^2(Q)\right\},
 \end{equation*}
endowed with the norm
\begin{equation*}
{\left\|u\right\|}_{s,2,0}=\left(\left\|u\right\|_2^2+\left[u\right]_{s,2}^2\right)^\frac12,
\end{equation*}
where ${\left[u\right]}_{s,2}=\left(\int_Q\frac{\left|u\left(x\right)-u\left(y\right)\right|^2}{\left|x-y\right|^{2s}}\mathrm d\mu\right)^\frac12$. It is easily known that $W_0^{s,2}$ is a uniformly convex reflexive Banach space and its norm is equivalent to $\left[u\right]_{s,2}$, and $H_0^s\left(\Omega,\mathbb{C}\right)=W_0^{s,2}\left(\Omega,\mathbb{C}\right)\left(k=0,1,\cdots\right)$. Further, let $G$ be an $N$-function, we define the following Lebesgue-Orlicz space
\begin{equation*}
  L^G\left(\Omega,\mathbb{C}\right):=\left\{u:\Omega\rightarrow\mathbb{C},u\;\mathrm{is}\;\mathrm{measurable}\;\mathrm{and}\;\rho_ G\left(u\right)<\infty\right\},
\end{equation*}
where $\rho_G\left(u\right):=\int_{\Omega}G\left(\left|u\left(x\right)\right|\right)\mathrm dx$.

The above Lebesgue-Orlicz spaces are equipped with the following Luxemburg norm
\begin{equation*}
{\left\|u\right\|}_G:=\inf\left\{\lambda>0\vert\rho_G\left(\frac u\lambda\right)\leq1\right\}.
\end{equation*}
We introduce
\begin{equation*}
  D_su\left(x,y\right)=\frac{u\left(x\right)-u\left(y\right)}{\left|x-y\right|^s}.
\end{equation*}
Correspondingly, define the following fractional Orlicz-Sobolev space
\begin{equation*}
  W_0^{s,G}\left(\Omega,\mathbb{C}\right):=\left\{u\in L^G\left(\Omega,\mathbb{C}\right),u=0\;\mathrm{in}\;\Omega^c,\rho_{s,G}\left(u\right)<\infty\right\},
\end{equation*}
where $s\in\left(0,1\right)$, $\rho_{s,G}\left(u\right):=\int_{Q}G\left(\left|D_su\left(x,y\right)\right|\right)\mathrm d\mu$.

The space $W_0^{s,G}\left(\Omega,\mathbb{C}\right)$ can be equipped with the the following norm
\begin{equation*}
  {\left\|u\right\|}_{s,G,0}:={\left\|u\right\|}_G+{\left[u\right]}_{s,G},
\end{equation*}
where ${\left[u\right]}_{s,G}:=\inf\left\{\left.\lambda>0\right|\rho_{s,G}\left(\frac u\lambda\right)\leq1\right\}.$

\subsection{Magnetic Fractional Orlicz-Sobolev spaces}
Some necessary definitions of Magnetic spaces in the following  are described below.

Let $A:\mathbb{R}^N\rightarrow\mathbb{R}^N$ be a smooth vector field, we consider the  magnetic fractional Orlicz-Sobolev space defined as
\begin{equation*}
W_{A,0}^{s,G}\left(\Omega,\mathbb{C}\right):=\left\{\left.u\right|u\in L^G\left(\Omega,\mathbb{C}\right),u=0\;\mathrm{in}\;\Omega^c,\rho_{s,G}^{A}\left(u\right)<\infty\right\},
\end{equation*}
where $\rho_{s,G}^A\left(u\right):=\int_{Q}G\left(\left|D_s^Au\left(x,y\right)\right|\right)\mathrm d\mu$.

The space $W_{A,0}^{s,G}\left(\Omega,\mathbb{C}\right)$ is Banach space when equipped with the following  Luxemburg norm
\begin{equation*}
\left\|u\right\|_{s,G,0}^A:={\left\|u\right\|}_G+\left[u\right]_{s,G}^A,
\end{equation*}
where ${\left[u\right]}_{s,G}^A:=\inf\left\{\left.\lambda>0\right|\rho_{s,G}^A\left(\frac u\lambda\right)\leq1\right\}$.

\begin{remark}\label{dual space}
Obviously, if we define the (topological) dual space of $W_{A,0}^{s,G}\left(\Omega,\mathbb{C}\right)$ by $W_{A,0}^{-s,G^\ast}\left(\Omega,\mathbb{C}\right)$, then $L^{G^\ast}\left(\Omega,\mathbb{C}\right)\subset W_{A,0}^{-s,G^\ast}\left(\Omega,\mathbb{C}\right)$.
\end{remark}

\subsection{Embeddings and inequalities}
There are some embeddings and inequalities that can be referenced in \cite{Alberico,Bahrouni,Bahrouni2,Bonder,Bonder2,Nezza,Pablo}.

\begin{theorem}[\cite{Nezza}]\label{embedding0}
The following embedding is continuous:
\begin{equation*}
W_{0}^{s,2}\left(\Omega\right)\hookrightarrow L^{q}\left(\Omega\right)
\end{equation*}
for any $q\in\left[1,\frac{2n}{n-2s}\right]$.
\end{theorem}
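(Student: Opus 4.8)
\textbf{Proof plan for Theorem \ref{embedding0}.} The statement to be established is the continuous embedding $W_0^{s,2}(\Omega)\hookrightarrow L^q(\Omega)$ for every $q\in[1,\tfrac{2n}{n-2s}]$. Since this is quoted from \cite{Nezza}, I will only sketch the standard route. The plan is to first prove the critical case $q = 2^\ast_s := \tfrac{2n}{n-2s}$ on the whole space, namely the fractional Sobolev inequality $\|u\|_{L^{2^\ast_s}(\mathbb R^n)}\le C[u]_{s,2}$, and then deduce all subcritical exponents $q\in[1,2^\ast_s)$ on the bounded domain $\Omega$ by interpolation together with the finiteness of $|\Omega|$.

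For the critical estimate on $\mathbb R^n$, I would proceed as follows. Given $u\in C_c^\infty(\mathbb R^n)$, decompose it dyadically (or equivalently use the fact that $[u]_{s,2}^2$ is, up to a constant, $\int_{\mathbb R^n}|\xi|^{2s}|\widehat u(\xi)|^2\,d\xi$, i.e.\ $\||\,\cdot\,|^s\widehat u\|_{L^2}^2$) and invoke the Hardy–Littlewood–Sobolev inequality: the Riesz potential $I_s = (-\Delta)^{-s/2}$ maps $L^{p}(\mathbb R^n)$ into $L^{p^\ast}(\mathbb R^n)$ with $\tfrac1{p^\ast}=\tfrac1p-\tfrac{s}{n}$. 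Writing $u = I_s((-\Delta)^{s/2}u)$ with $(-\Delta)^{s/2}u\in L^2$ gives $u\in L^{2^\ast_s}$ with the desired bound. (An alternative, more elementary argument avoids the Fourier transform by establishing first a weak-type estimate $|\{|u|>\lambda\}|\le C\lambda^{-2^\ast_s}[u]_{s,2}^{2^\ast_s}$ via splitting the difference quotient integral and a careful level-set analysis, then upgrading to the strong bound through real interpolation; this is the route taken in \cite{Nezza} and is the approach I would write out in full.) Then, for $u\in W_0^{s,2}(\Omega)$, extending by zero outside $\Omega$ keeps $[u]_{s,2}$ finite (this uses exactly that the Gagliardo seminorm is taken over $Q$, not over $\Omega\times\Omega$), so the whole-space inequality applies and yields $\|u\|_{L^{2^\ast_s}(\Omega)}\le C[u]_{s,2}\le C\|u\|_{s,2,0}$.

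To pass from the critical exponent to an arbitrary $q\in[1,2^\ast_s)$ on $\Omega$, I would use H\"older's inequality on the bounded set $\Omega$: writing $q = \theta\cdot 1 + (1-\theta)\cdot 2^\ast_s$ for a suitable $\theta\in(0,1]$ — or more directly $\|u\|_{L^q(\Omega)}\le |\Omega|^{1/q - 1/2^\ast_s}\|u\|_{L^{2^\ast_s}(\Omega)}$ when $q\le 2^\ast_s$ — one controls the $L^q$ norm by the $L^{2^\ast_s}$ norm times a finite constant depending only on $n$, $s$, $q$ and $|\Omega|$. Combining with the previous step completes the proof for all $q$ in the stated range. (When $2s\ge n$ the exponent $2^\ast_s$ should be read as $+\infty$ and the argument is replaced by the corresponding Morrey/Orlicz-type embedding, but under the standing hypotheses $2s<n$ is the relevant case.)

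\textbf{Main obstacle.} The only genuinely nontrivial point is the critical-exponent inequality $\|u\|_{L^{2^\ast_s}(\mathbb R^n)}\le C[u]_{s,2}$: the subcritical cases and the restriction to $\Omega$ are then essentially bookkeeping via H\"older and zero-extension. If one wants a self-contained argument avoiding the Fourier transform and Hardy–Littlewood–Sobolev, the delicate step is the dyadic decomposition of the domain of integration of the Gagliardo seminorm and the resulting weak-type $(2,2^\ast_s)$ bound, followed by the interpolation upgrade; this is precisely the content of the proof in \cite{Nezza}, which we cite rather than reproduce.
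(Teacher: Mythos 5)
Your proposal is correct and follows essentially the same route as the source the paper cites: the paper itself gives no proof of this theorem, quoting it directly from \cite{Nezza}, and your sketch (critical fractional Sobolev inequality on $\mathbb{R}^n$ via the level-set/dyadic argument or Hardy--Littlewood--Sobolev, zero extension of functions in $W_0^{s,2}(\Omega)$, then H\"older on the bounded domain for subcritical $q$) is precisely the standard argument found there. No gaps.
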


\begin{theorem}[\cite{Bahrouni}]\label{embedding1}
Let $G$ be an $N$-function. If $\left(H_1\right)$ and $\left(H_2\right)$ hold, then the embedding
\begin{equation*}
  W^{s,G}_0\left(\Omega \right)\hookrightarrow L^{G^\ast}\left(\Omega \right)
\end{equation*}
is continuous. Moreover, for any $N$-function $B$ such that $B\ll G^\ast$(see Definition \ref{$A$and$B$}), the embedding
\begin{equation*}
  W^{s,G}_0\left(\Omega \right)\hookrightarrow L^B\left(\Omega \right)
\end{equation*}
is compact.
\end{theorem}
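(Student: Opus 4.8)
The plan is to treat the two assertions in turn, the continuous embedding carrying essentially all of the analytic difficulty. For the continuous embedding $W^{s,G}_0(\Omega)\hookrightarrow L^{G^\ast}(\Omega)$ I would first reduce to $\Omega=\mathbb{R}^N$: if $u\in W^{s,G}_0(\Omega)$ is extended by $0$ outside $\Omega$, then $D_su(x,y)=0$ whenever $x,y\in\Omega^c$, so the Gagliardo--Orlicz modular $\rho_{s,G}$ computed over $Q$ coincides with the one computed over $\mathbb{R}^N\times\mathbb{R}^N$, and it suffices to establish the fractional Orlicz--Sobolev inequality $\|u\|_{L^{G^\ast}(\mathbb{R}^N)}\le C[u]_{s,G}$. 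Here assumption $(H_2)$ is precisely what makes $(G^\ast)^{-1}(t)=\int_0^t G^{-1}(w)\,w^{-(N+s)/N}\,\mathrm{d}w$ from (\ref{Sobolev conjugate}) finite, strictly increasing and divergent at infinity, so that $G^\ast$ is a genuine $N$-function, while $(H_1)$ supplies, through the two-sided bounds (\ref{2.1.6})--(\ref{2.1.7}), the $\Delta_2$-control of $G$ and of its complementary function needed at every step. The inequality itself I would prove by the dyadic truncation method: write $|u|=\sum_{k\in\mathbb{Z}}u_k$, where $u_k$ is the truncation of $|u|$ to the level set $\{2^k\le|u|<2^{k+1}\}$; estimate each $u_k$ in $L^{N/(N-s)}(\mathbb{R}^N)$ — equivalently, its contribution to the $G^\ast$-modular of $u$ — against its Gagliardo seminorm via the elementary fractional embedding $W^{s,1}\hookrightarrow L^{N/(N-s)}$; control the Gagliardo modular of $u_k$ by the portion of that of $u$ supported on the neighbouring level sets; and finally recombine the pieces using the definition of $G^\ast$ together with the homogeneity bound (\ref{2.1.7}). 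This is the scheme carried out in \cite{Bahrouni}, within the fractional Orlicz--Sobolev framework developed in \cite{Napoli,Salort}.

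For the compactness, fix an $N$-function $B$ with $B\ll G^\ast$ and a sequence $(u_n)$ bounded in $W^{s,G}_0(\Omega)$. I would combine two ingredients: a \emph{base} compactness, namely that on the bounded domain $\Omega$ the space $W^{s,G}_0(\Omega)$ embeds compactly into $L^G(\Omega)$ (hence into $L^1(\Omega)$) — the fractional Orlicz analogue of the Rellich--Kondrachov theorem, proved by a Riesz--Fréchet--Kolmogorov argument in which the Gagliardo modular $\rho_{s,G}$ controls the $L^G$-modulus of continuity of translations — and the continuous embedding into $L^{G^\ast}(\Omega)$ just obtained. Passing to a subsequence with $u_n\to u$ in $L^1(\Omega)$ and a.e., and using the uniform bound $\rho_{G^\ast}(u_n/M)\le1$ for some $M>0$, one upgrades to convergence in $L^B(\Omega)$ by splitting the modular $\rho_B\big((u_n-u)/\lambda\big)$ at a level $T_\varepsilon$ furnished by Definition \ref{$A$and$B$}: on $\{|u_n-u|\le\lambda T_\varepsilon\}$ the convexity of $B$ and $B(0)=0$ make the integrand dominated by a fixed multiple of $|u_n-u|$, whose $L^1$-integral vanishes; on the complement, $B\ll G^\ast$ together with the convexity of $G^\ast$ and $G^\ast(\varepsilon t)\le\varepsilon G^\ast(t)$ bound the integrand by $\tfrac{\varepsilon}{2}\big(G^\ast(u_n/M)+G^\ast(u/M)\big)$, contributing at most $\varepsilon$ uniformly in $n$. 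Letting $n\to\infty$ and then $\varepsilon\to0$ gives $\rho_B\big((u_n-u)/\lambda\big)\to0$ for every $\lambda>0$, i.e. $\|u_n-u\|_B\to0$; a routine subsequence argument promotes this to the whole sequence, so the embedding is compact.

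The main obstacle is the fractional Orlicz--Sobolev inequality on $\mathbb{R}^N$, and within it the recombination of the dyadic truncations: the non-locality of $\rho_{s,G}$ forces one to account for genuine interactions between distinct dyadic levels, which must be absorbed using the convexity of $G$ and the $\Delta_2$ estimates (\ref{2.1.6})--(\ref{2.1.7}), whereas everything else — including the compactness step — is comparatively soft. Since, moreover, the statement is quoted from \cite{Bahrouni}, one may alternatively appeal directly to that reference.
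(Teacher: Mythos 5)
This theorem is not proved in the paper at all: it is imported verbatim from \cite{Bahrouni} (as the citation in the theorem header indicates), so there is no in-paper argument to compare your proposal against. Judged on its own terms, your outline is consistent with how this result is established in the literature. The compactness half is essentially a complete and correct argument: the base compactness $W^{s,G}_0(\Omega)\hookrightarrow\hookrightarrow L^G(\Omega)$ via a Riesz--Fr\'echet--Kolmogorov argument, followed by the modular splitting at the level $T_\varepsilon$ supplied by $B\ll G^\ast$ (linear bound from convexity and $B(0)=0$ below $T_\varepsilon$, domination by $\varepsilon\,G^\ast$ above it, with the uniform bound $\rho_{G^\ast}(u_n/M)\le 1$ coming from the continuous embedding), is exactly the standard de~la~Vall\'ee~Poussin--type upgrade and is sound. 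Your reduction of the continuous embedding to $\mathbb{R}^N$ is also correct, since $D_s u$ vanishes on $\Omega^c\times\Omega^c$ for $u$ extended by zero.

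The one place where your proposal falls short of a proof is the fractional Orlicz--Sobolev inequality $\|u\|_{L^{G^\ast}(\mathbb{R}^N)}\le C[u]_{s,G}$ itself: you correctly identify the dyadic truncation scheme (which is the method of \cite{Alberico}; \cite{Bahrouni} argues somewhat more directly along Donaldson--Trudinger lines, so be careful about attributing the truncation scheme to that reference) and you correctly name the recombination of dyadic levels under the non-local modular as the crux, but you do not carry out that estimate. As a self-contained proof this is a genuine gap — everything difficult is concentrated precisely there. Since, however, the theorem is stated in the paper as a quoted result and you explicitly offer the alternative of citing \cite{Bahrouni}, that deferral is legitimate and matches what the paper itself does.
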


\begin{theorem}[\cite{Pablo}]\label{embedding2}
The following embedding
\begin{equation*}
W_{A,0}^{s,G}\left(\Omega,\mathbb{C}\right)\hookrightarrow W_{0}^{s,G}\left(\Omega\right)
\end{equation*}
is continuous, i.e., for $u\in W_{A,0}^{s,G}\left(\Omega,\mathbb{C}\right)$, we have $\left|u\right|\in W_{0}^{s,G}\left(\Omega\right)$ and ${\left\|\left|u\right|\right\|}_{s,G,0}\leq\left\|u\right\|_{s,G,0}^A$.
\end{theorem}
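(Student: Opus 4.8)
**Proof proposal for Theorem 2.7 (the diamagnetic-type embedding $W_{A,0}^{s,G}(\Omega,\mathbb{C})\hookrightarrow W_0^{s,G}(\Omega)$).**

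The plan is to reduce everything to a pointwise comparison between $|D_s^A u(x,y)|$ and $|D_s|u|(x,y)|$, which is the fractional magnetic analogue of the classical diamagnetic inequality $|\nabla |u|| \le |\nabla u - iAu|$. First I would observe that for fixed $x,y$ and the unit-modulus factor $e^{i(x-y)\cdot A((x+y)/2)}$, we have the elementary inequality
\begin{equation*}
\big|\,|u(x)| - |u(y)|\,\big| \le \big|\,u(x) - e^{i(x-y)\cdot A((x+y)/2)} u(y)\,\big|,
\end{equation*}
because $||a|-|b||=||a|-|e^{i\theta}b||\le |a-e^{i\theta}b|$ for any $a,b\in\mathbb{C}$ and $\theta\in\mathbb{R}$ (triangle inequality, together with $|e^{i\theta}b|=|b|$). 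Dividing both sides by $|x-y|^s$ gives the pointwise bound $|D_s|u|(x,y)| \le |D_s^A u(x,y)|$ for a.e.\ $(x,y)\in Q$.

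Next I would feed this into the monotonicity of the $N$-function $G$: since $G$ is non-decreasing on $[0,\infty)$ (property (i) of $N$-functions recalled after Definition 2.1), the pointwise inequality yields $G(|D_s|u|(x,y)|) \le G(|D_s^A u(x,y)|)$, and integrating against the measure $\mathrm d\mu = \mathrm dx\,\mathrm dy/|x-y|^N$ over $Q$ gives $\rho_{s,G}(|u|) \le \rho_{s,G}^A(u)$. The same scaling argument applied to $u/\lambda$ shows $\rho_{s,G}(|u|/\lambda)\le \rho_{s,G}^A(u/\lambda)$ for every $\lambda>0$, so any $\lambda$ admissible in the infimum defining $[u]_{s,G}^A$ is also admissible in the infimum defining $[\,|u|\,]_{s,G}$; hence $[\,|u|\,]_{s,G}\le [u]_{s,G}^A$. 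Since $\||u|\|_G = \|u\|_G$ trivially (the Luxemburg norm only sees $|u|$), adding the two pieces gives $\|\,|u|\,\|_{s,G,0} = \|u\|_G + [\,|u|\,]_{s,G} \le \|u\|_G + [u]_{s,G}^A = \|u\|_{s,G,0}^A$. In particular $|u|\in W_0^{s,G}(\Omega)$ whenever $u\in W_{A,0}^{s,G}(\Omega,\mathbb{C})$, and the map $u\mapsto |u|$ is bounded with constant $1$; combined with $|u|=0$ in $\Omega^c$ (inherited from $u=0$ in $\Omega^c$), this is exactly the claimed continuous embedding.

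The only genuinely delicate point is the measure-theoretic justification that $D_s^A u(x,y)$ is a well-defined measurable function on $Q$ so that the pointwise inequality can be integrated — this is handled by noting that $(x,y)\mapsto e^{i(x-y)\cdot A((x+x)/2)}$ is continuous (as $A$ is smooth) and $u$ is measurable, so the integrand is measurable, and all integrals above are of nonnegative functions so Tonelli applies without integrability concerns. I expect no real obstacle here beyond bookkeeping; the heart of the matter is the one-line pointwise diamagnetic inequality, and everything else is monotonicity of $G$ plus the definition of the Luxemburg norm.
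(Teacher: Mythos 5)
Your proof is correct and follows essentially the standard diamagnetic-inequality argument used in the cited source \cite{Pablo}; the paper itself states this embedding without proof, importing it from that reference, and the pointwise bound $\bigl||u(x)|-|u(y)|\bigr|\leq\bigl|u(x)-e^{i(x-y)\cdot A\left(\frac{x+y}{2}\right)}u(y)\bigr|$ followed by monotonicity of $G$ and the Luxemburg-norm scaling is exactly the intended route. The only blemish is the harmless typo $A\left(\frac{x+x}{2}\right)$ for $A\left(\frac{x+y}{2}\right)$ in your measurability remark.
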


\begin{remark}\label{remark $A$and$B$}
Let $B\left(\left|u\right|\right)=\left|u\right|^{p+1}$, by Definition \ref{$A$and$B$} and (\ref{Sobolev conjugate}), it is easy to know that $\underset{\left|u\right|\rightarrow+\infty}\lim\frac{B\left(k\left|u\right|\right)}{G^\ast\left(\left|u\right|\right)}=0$, i.e., $B\ll G^\ast$. Combining Theorems \ref{embedding1} and \ref{embedding2}, we have that the embedding  $W^{s,G}_{A,0}\left(\Omega\right)\hookrightarrow L^{p+1}\left(\Omega\right)$ is compact.

In addition, there is important Poincar$\acute{e}$ inequality for the  magnetic fractional Orlicz-Sobolev space (see \cite[Corollary 5.7]{Bonder2}), that is, for every $u\in W_{A,0}^{s,G}\left(\Omega,\mathbb{C}\right)$, there exists a constant $C$ such
that
\begin{equation*}
  {\left\|u\right\|}_G\leq C\left[u\right]_{s,G}^A,
\end{equation*}
which implies $\left\|u\right\|_{s,G,0}^A$ is equivalent to $\left[u\right]_{s,G}^{A}$.
\end{remark}

\begin{lemma}[\cite{Bahrouni,Bahrouni2,Pablo}]\label{inequality1}
Let $G$ be an $N$-function satisfying (\ref{2.1.4}) and $\zeta^\pm:\left[0,\infty\right)\rightarrow\mathbb{R}$ be defined as follows
\begin{equation*}
\zeta^+\left(t\right):=\max\left\{t^{q^-},t^{q^+}\right\},\;\zeta^-\left(t\right):=\min\left\{t^{q^-},t^{q^+}\right\}.
\end{equation*}
Then,

$\left(i\right)\zeta^-\left({\left\|u\right\|}_G\right)\leq\rho_G\left(u\right)\leq\zeta^+\left({\left\|u\right\|}_G\right);$

$\left(ii\right)\zeta^-\left({\left\|u\right\|}_{\widetilde G}\right)\leq\rho_{\widetilde G}\left(u\right)\leq\zeta^+\left({\left\|u\right\|}_{\widetilde G}\right);$

$\left(iii\right)\zeta^-\left({\left[u\right]}_{s,G}\right)\leq\rho_{s,G}\left(u\right)\leq\zeta^+\left({\left[u\right]}_{s,G}\right);$

$\left(iv\right)\zeta^-\left(\left[u\right]_{s,G}^A\right)\leq\rho_{s,G}^A\left(u\right)\leq\zeta^+\left(\left[u\right]_{s,G}^A\right).$
\end{lemma}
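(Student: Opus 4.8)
The plan is to derive all four inequalities from a single source — the power versus $N$-function comparison (\ref{2.1.7}) — together with the elementary behaviour of the Luxemburg norm under the $\Delta_2$ condition. First dispose of the trivial case $u=0$: then the relevant norm (or seminorm) and the corresponding modular both vanish and, since $\zeta^{\pm}(0)=0$, all four estimates hold. So assume $u$ is a nonzero element of the space in question, so that the quantity $\lambda$ playing the role of the norm — one of $\|u\|_G$, $\|u\|_{\widetilde G}$, $[u]_{s,G}$, $[u]_{s,G}^A$ — satisfies $0<\lambda<\infty$.

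For (i), put $\lambda:=\|u\|_G$. Because (\ref{2.1.4}) forces $G$ to satisfy the $\Delta_2$ condition, the modular $\rho_G$ is finite and continuous on $L^G(\Omega,\mathbb{C})$, so the infimum defining the Luxemburg norm is attained, that is $\rho_G(u/\lambda)=1$. Apply (\ref{2.1.7}) pointwise with $a=\lambda$ and $b=|u(x)|/\lambda$, so that $ab=|u(x)|$; since $\min\{\lambda^{q^-},\lambda^{q^+}\}=\zeta^-(\lambda)$ and $\max\{\lambda^{q^-},\lambda^{q^+}\}=\zeta^+(\lambda)$, this gives
\[
\zeta^-(\lambda)\,G\!\left(\frac{|u(x)|}{\lambda}\right)\le G(|u(x)|)\le \zeta^+(\lambda)\,G\!\left(\frac{|u(x)|}{\lambda}\right)\qquad\text{for a.e. }x\in\Omega.
\]
Integrating over $\Omega$ and using $\rho_G(u/\lambda)=1$ yields $\zeta^-(\|u\|_G)\le\rho_G(u)\le\zeta^+(\|u\|_G)$, which is (i).

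Part (ii) follows by repeating this argument with the complementary function $\widetilde G$ (and the exponents attached to it) in place of $G$: by (\ref{2.1.4}), $\widetilde G$ also satisfies the $\Delta_2$ condition and hence the analogue of (\ref{2.1.7}), so nothing changes. Parts (iii) and (iv) are nothing but (i) read on the measure space $(Q,d\mu)$ rather than $(\Omega,dx)$: by definition $[u]_{s,G}$ (respectively $[u]_{s,G}^A$) is the Luxemburg norm of the function $(x,y)\mapsto D_su(x,y)$ (respectively $(x,y)\mapsto D_s^Au(x,y)$) in $L^G(Q,d\mu)$, while $\rho_{s,G}(u)=\int_Q G(|D_su(x,y)|)\,d\mu$ (respectively $\rho_{s,G}^A(u)=\int_Q G(|D_s^Au(x,y)|)\,d\mu$) is the matching modular. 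Re-running the pointwise estimate above and integrating against $d\mu$ with $\lambda=[u]_{s,G}$ (respectively $\lambda=[u]_{s,G}^A$) gives (iii) and (iv).

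The one point requiring care is the identity $\rho_G(u/\|u\|_G)=1$ (and its three analogues) invoked above. The inequality $\rho_G(u/\|u\|_G)\le 1$ is immediate from the definition of the Luxemburg norm together with monotone convergence (letting $\mu\downarrow\|u\|_G$ in $\rho_G(u/\mu)\le 1$), and already yields the upper bounds in (i)--(iv). For the matching lower bounds one may instead apply (\ref{2.1.7}) with $a=\mu$ and $b=|u(x)|/\mu$ for $\mu<\|u\|_G$, where $\rho_G(u/\mu)>1$, obtaining $\rho_G(u)\ge\zeta^-(\mu)\rho_G(u/\mu)>\zeta^-(\mu)$, and then letting $\mu\uparrow\|u\|_G$. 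Under (\ref{2.1.4}) the $\Delta_2$ condition makes the modular continuous and the clean identity $\rho_G(u/\|u\|_G)=1$ available, so this detour is unnecessary; establishing/using that identity is essentially the only ingredient beyond the pointwise comparison (\ref{2.1.7}), and it is where the hypothesis (\ref{2.1.4}) really enters.
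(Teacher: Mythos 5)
Your proof is correct, and it is essentially the standard argument from the references the paper cites for this lemma (the paper itself offers no proof, only the attribution to \cite{Bahrouni,Bahrouni2,Pablo}): reduce everything to the pointwise comparison (\ref{2.1.7}) with $a=\lambda$ and $b$ the normalized integrand, integrate, and invoke the unit-modular identity $\rho_G(u/\|u\|_G)=1$, which is available because (\ref{2.1.4}) forces the $\Delta_2$ condition; your fallback via $\mu\uparrow\|u\|_G$ for the lower bound (and the monotone-convergence argument for the upper bound) is a clean way to avoid even that identity. The identification of $[u]_{s,G}$ and $[u]_{s,G}^A$ as Luxemburg norms of $D_su$ and $D_s^Au$ on the measure space $(Q,\mathrm d\mu)$ is also right, since both difference quotients are linear in $u$, so (iii) and (iv) genuinely are instances of (i).

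One small caveat concerns (ii). As literally printed, the lemma reuses the same $\zeta^\pm$ (built from $q^-,q^+$) for $\widetilde G$, but the correct indices for the complementary function are the conjugate exponents $(q^+)'=q^+/(q^+-1)$ and $(q^-)'=q^-/(q^--1)$; for instance, when $G(t)=t^p$ one has $\rho_{\widetilde G}(u)=\|u\|_{\widetilde G}^{p'}$ rather than $\|u\|_{\widetilde G}^{p}$, so the statement with the unmodified $\zeta^\pm$ fails unless $p=2$. Your phrase ``the exponents attached to it'' implicitly makes this correction, and with the conjugate indices your argument for (ii) goes through verbatim; just be aware that what you prove is the (correct) version with $\widetilde\zeta^\pm$ formed from $(q^+)'$ and $(q^-)'$, which is how the cited references state it.
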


\begin{lemma}[\cite{Bonder}]\label{inequality2}
Assume that $G$ is an $N$-function and satisfies (\ref{2.1.1(1)}), then
\begin{equation}\label{N-function inequality}
\widetilde G\left(g\left(t\right)\right)\leq\left(q^+-1\right)G\left(t\right).
\end{equation}
\end{lemma}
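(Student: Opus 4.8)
The plan is to reduce the inequality to the \emph{Young equality} for a pair of complementary $N$-functions, and then simply insert the hypothesis (\ref{2.1.1(1)}). Recall that by definition $\widetilde G\left(s\right)=\sup_{\tau>0}\left\{s\tau-G\left(\tau\right)\right\}$. The key observation is that for the special choice $s=g\left(t\right)$ this supremum is attained exactly at $\tau=t$, so that
\begin{equation*}
\widetilde G\left(g\left(t\right)\right)=tg\left(t\right)-G\left(t\right),\qquad\forall t>0 .
\end{equation*}
Granting this identity, the conclusion is immediate: by (\ref{2.1.1(1)}) we have $tg\left(t\right)\leq q^{+}G\left(t\right)$, hence
\begin{equation*}
\widetilde G\left(g\left(t\right)\right)=tg\left(t\right)-G\left(t\right)\leq q^{+}G\left(t\right)-G\left(t\right)=\left(q^{+}-1\right)G\left(t\right),
\end{equation*}
which is precisely (\ref{N-function inequality}).

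The first step I would carry out is therefore to establish the Young equality. Fix $t>0$ and consider the function $\varphi\left(\tau\right):=g\left(t\right)\tau-G\left(\tau\right)$ for $\tau\geq0$; since $G$ is convex, $\varphi$ is concave. For $0\leq\tau<t$ one has $\varphi\left(t\right)-\varphi\left(\tau\right)=g\left(t\right)\left(t-\tau\right)-\int_{\tau}^{t}g\left(r\right)\mathrm dr\geq g\left(t\right)\left(t-\tau\right)-g\left(t\right)\left(t-\tau\right)=0$, because $g$ is non-decreasing; and for $\tau>t$ one similarly gets $\varphi\left(t\right)-\varphi\left(\tau\right)=g\left(t\right)\left(t-\tau\right)+\int_{t}^{\tau}g\left(r\right)\mathrm dr\geq0$. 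Thus $\varphi$ attains its maximum at $\tau=t$, so $\widetilde G\left(g\left(t\right)\right)=\varphi\left(t\right)=tg\left(t\right)-G\left(t\right)$, as claimed. (Equivalently, one may invoke the representation $\widetilde G\left(s\right)=\int_0^{s}g^{-1}\left(\tau\right)\mathrm d\tau$ and the integration-by-parts/area interpretation of the Legendre transform.)

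The only point requiring a little care — and the place where the argument is least automatic — is precisely this justification of the Young \emph{equality} (as opposed to the Young \emph{inequality} $at\leq G\left(t\right)+\widetilde G\left(a\right)$ already recorded in the excerpt) under the weak regularity assumed for $g$, namely that $g$ is merely positive, non-decreasing and right-continuous with $g\left(0\right)=0$. The monotonicity estimates above for the two cases $\tau<t$ and $\tau>t$ cover this situation without needing differentiability or strict monotonicity of $g$; right-continuity is not even used. Once the equality is in hand, no further estimates are needed and the lemma follows in one line from (\ref{2.1.1(1)}).
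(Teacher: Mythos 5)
Your proof is correct: the Young equality $\widetilde G\left(g\left(t\right)\right)=tg\left(t\right)-G\left(t\right)$ is properly justified from the monotonicity of $g$ alone, and combining it with (\ref{2.1.1(1)}) gives the claim in one line. The paper itself offers no proof (it only cites \cite{Bonder}), and your argument is exactly the standard one used in that reference, so there is nothing to add.
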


\begin{lemma}[Holder's inequality\cite{Pablo}]\label{inequality3}
Let $M=M\left(x,y\right)\in L_\mu^G\left(Q\right)$ and $F=F\left(x,y\right)\in L_\mu^{\widetilde G}\left(Q\right)$, then
\begin{equation*}
\int_QM\left(x,y\right)F\left(x,y\right)\mathrm d\mu\leq2{\left\|M\right\|}_{G}{\left\|F\right\|}_{\widetilde G},
\end{equation*}
\end{lemma}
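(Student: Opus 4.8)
The plan is to derive the inequality from the pointwise Young-type inequality $at\le G(t)+\widetilde G(a)$ recorded above, combined with the normalization property of the Luxemburg norms on $L_\mu^G(Q)$ and $L_\mu^{\widetilde G}(Q)$. First I would dispose of the trivial case: if $\|M\|_G=0$ or $\|F\|_{\widetilde G}=0$, then $M=0$ or $F=0$ $\mu$-almost everywhere on $Q$, both sides vanish, and there is nothing to prove. Hence from now on set $a:=\|M\|_G>0$ and $b:=\|F\|_{\widetilde G}>0$.

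The key preliminary step is to establish the modular bounds $\int_Q G\!\left(|M(x,y)|/a\right)\mathrm d\mu\le 1$ and $\int_Q\widetilde G\!\left(|F(x,y)|/b\right)\mathrm d\mu\le 1$. For any $\varepsilon>0$ the definition of the infimum in the Luxemburg norm provides some $\lambda<a+\varepsilon$ with $\int_Q G(|M|/\lambda)\,\mathrm d\mu\le 1$; since $G$ is increasing, $\int_Q G(|M|/(a+\varepsilon))\,\mathrm d\mu\le 1$ as well. Letting $\varepsilon\downarrow0$, the integrand $G(|M(x,y)|/(a+\varepsilon))$ increases pointwise to $G(|M(x,y)|/a)$ by continuity and monotonicity of $G$, so the monotone convergence theorem yields $\int_Q G(|M|/a)\,\mathrm d\mu\le 1$; the same argument with $\widetilde G$ and $F$ gives the companion bound. (The standing $\Delta_2$ assumptions on $G$ and $\widetilde G$ are what guarantee these modulars are finite, so the monotone limit is meaningful; alternatively one can simply keep $a+\varepsilon$, $b+\varepsilon$ throughout and let $\varepsilon\downarrow0$ only at the end.)

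Next I would apply the Young-type inequality pointwise on $Q$, taking $t=|M(x,y)|/a$ and taking the free parameter in the inequality to be $|F(x,y)|/b$, which gives, for $\mu$-a.e. $(x,y)\in Q$,
\[
\frac{|M(x,y)|}{a}\cdot\frac{|F(x,y)|}{b}\le G\!\left(\frac{|M(x,y)|}{a}\right)+\widetilde G\!\left(\frac{|F(x,y)|}{b}\right).
\]
Integrating this over $Q$ against $\mathrm d\mu$ and invoking the two modular bounds just obtained,
\[
\frac1{ab}\int_Q|M(x,y)F(x,y)|\,\mathrm d\mu\le\int_Q G\!\left(\frac{|M|}{a}\right)\mathrm d\mu+\int_Q\widetilde G\!\left(\frac{|F|}{b}\right)\mathrm d\mu\le 1+1=2.
\]
Since $\int_Q M F\,\mathrm d\mu\le\int_Q|MF|\,\mathrm d\mu$, multiplying through by $ab=\|M\|_G\|F\|_{\widetilde G}$ yields the asserted inequality.

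I do not anticipate a genuine obstacle: the whole argument is Young's inequality followed by an integration. The one point deserving care is the normalization step $\int_Q G(|M|/\|M\|_G)\,\mathrm d\mu\le 1$, i.e.\ the attainment (in the limiting sense) of the defining infimum of the Luxemburg norm, which is precisely where the $\Delta_2$ hypotheses on $G$ and $\widetilde G$ enter so as to keep the relevant modulars finite; everything else is routine.
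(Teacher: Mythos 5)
Your proof is correct and is the classical Luxemburg-norm argument (Young's inequality $st\le G(t)+\widetilde G(s)$ applied pointwise to the normalized functions, plus the modular bound $\int_Q G(|M|/\|M\|_G)\,\mathrm d\mu\le 1$), which is exactly how this inequality is proved in the cited literature; the paper itself states the lemma without proof, attributing it to \cite{Pablo}. One trivial remark: the monotone convergence step in your normalization argument needs no $\Delta_2$ hypothesis at all, since MCT applies to any increasing sequence of nonnegative functions whose integrals are uniformly bounded by $1$ — your fallback of carrying $a+\varepsilon$, $b+\varepsilon$ to the end is equally fine but the finiteness worry is unnecessary.
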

where $L_\mu^G\left(Q\right):=\left\{M\left(x,y\right):\Omega\rightarrow\mathbb{R},\;M\;\mathrm{is}\;\mathrm{measurable}\;\mathrm{and}\;\int_QG\left(\left|M\left(x,y\right)\right|\right)\mathrm d\mu<\infty\right\}$,\\$L_\mu^{\widetilde G}\left(Q\right):=\left\{F\left(x,y\right):\Omega\rightarrow\mathbb{R},\;F\;\mathrm{is}\;\mathrm{measurable}\;\mathrm{and}\;\int_Q{\widetilde G}\left(\left|F\left(x,y\right)\right|\right)\mathrm d\mu<\infty\right\}$.

\subsection{The theory of potential wells}

Now, we denote the energy functional $J\left(u\right)$  of problem (\ref{1.1}) by
\begin{equation*}
J\left(u\right):=\rho_{s,G}^A\left(u\right)-\frac1{p+1}\int_\Omega\left|u\right|^{p+1}\mathrm dx,
\end{equation*}
then the corresponding Nehari functional is
\begin{equation*}
I\left(u\right):=\int_Qg\left(\left|D_s^Au\left(x,y\right)\right|\right)\left|D_s^Au\left(x,y\right)\right|\mathrm d\mu-\int_\Omega\left|u\right|^{p+1}\mathrm dx,
\end{equation*}
and the Nehari manifold is
\begin{equation*}
{\cal N}: = \left\{ {u \in W_{A,0}^{s,G}\left( { \Omega,\mathbb{C}} \right)\backslash \left\{ 0 \right\}:I\left( u \right) = 0} \right\}.
\end{equation*}
It follows from the definitions of $J\left(u\right)$ and $I\left(u\right)$, we introduce sets
\begin{equation*}
\begin{split}
&W:=\left\{\left.u\in W_{A,0}^{s,G}\left(\Omega,\mathbb{C}\right)\right|J\left(u\right)<d,I\left(u\right)>0\right\}\cup\left\{0\right\},\\
&V:=\left\{\left.u\in W_{A,0}^{s,G}\left(\Omega,\mathbb{C}\right)\right|J\left(u\right)<d,I\left(u\right)<0\right\},
\end{split}
\end{equation*}
where $d$ is the depth of the potential well, which can be defined as
\begin{equation*}
d:=\underset{u\in\mathcal N}{\inf}J\left(u\right).
\end{equation*}

For $\delta>0$, we further define
\begin{equation}\label{C define}
\begin{split}
&I_\delta\left(u\right):=\delta\int_Qg\left(\left|D_s^Au\left(x,y\right)\right|\right)\left|D_s^Au\left(x,y\right)\right|\mathrm d\mu-\int_\Omega\left|u\right|^{p+1}\mathrm dx,\\
&{\cal N}_\delta:=\left\{ {u \in W_{A,0}^{s,G}\left( {\Omega ,\mathbb{C}} \right)\backslash \left\{ 0 \right\}:{I_\delta }\left( u \right) = 0} \right\},\\
&W_\delta:=\left\{\left.u\in W_{A,0}^{s,G}\left( {\Omega ,\mathbb{C}} \right)\right|J\left(u\right)<d\left(\delta\right),I_\delta\left(u\right)>0\right\}\cup\left\{0\right\},\\
&V_\delta:=\left\{\left.u\in W_{A,0}^{s,G}\left( {\Omega ,\mathbb{C}} \right)\right|J\left(u\right)<d\left(\delta\right),I_\delta\left(u\right)<0\right\},\\
&d\left( \delta  \right): = \mathop {\inf }\limits_{u \in {{\cal N}_\delta }} J\left( u \right).
\end{split}
\end{equation}

Next, we define some sets and functionals for the weak solution with high energy level  as follows
\begin{equation*}
\begin{split}
&{\mathcal N}_+:=\left\{u\in W_{A,0}^{s,G}\left(\Omega,\mathbb{C}\right)\left|I\left(u\right)>0\right.\right\},\\
&{\mathcal N}_-:=\left\{u\in W_{A,0}^{s,G}\left(\Omega,\mathbb{C}\right)\left|I\left(u\right)<0\right.\right\},\\
&J^\alpha:=\left\{u\in W_{A,0}^{s,G}\left(\Omega,\mathbb{C}\right)\left|J\left(u\right)\leq\alpha\right.\right\},\;\;\;\forall\alpha>d.
\end{split}
\end{equation*}
From the definitions of $\mathcal N,\;J^\alpha,\; J\left(u\right)$ and $d$, we get
\begin{equation*}
  \mathcal N^\alpha:=\mathcal N\cap J^\alpha=\left\{u\in\mathcal N\left|J\left(u\right)\leq\alpha\right.\right\},\;\;\;\forall\alpha>d.
\end{equation*}
Furthermore, define
\begin{equation*}
 \lambda_\alpha=\inf\left\{\left\|u\right\|_{H^s}^2\left|u\in\right.\mathcal N^\alpha\right\}
\end{equation*}
for $\alpha>d$. Obviously, $\lambda_\alpha$  is non-increasing.

 Let $\left\langle\cdot,\cdot\right\rangle$ the dual product between $W_{A,0}^{s,G}\left(\Omega,\mathbb{C}\right)$ and its dual space $W_{A,0}^{-s,G^\ast}\left(\Omega,\mathbb{C}\right)$, then
\begin{equation}\label{2.22}
\left\langle J'\left(\psi\right),\psi\right\rangle=I\left(\psi\right).
\end{equation}

By (\ref{1.2}), similar to \cite{Cao2}, we have
\begin{equation*}
\begin{split}
&\Phi=\left\{\mathrm{the}\;\mathrm{solutions}\;\mathrm{of}\;(\ref{1.7})\right\}\\
&\;\;\;=\left\{\psi\in W_{A,0}^{s,G}\left(\Omega,\mathbb{C}\right):\;J'\left(\psi\right)=0\;\mathrm{in}\;W_{A,0}^{-s,G^\ast}\left(\Omega,\mathbb{C}\right)\right\}\\
&\;\;\;=\left\{\psi\in W_{A,0}^{s,G}\left(\Omega,\mathbb{C}\right):\;\left\langle J'\left(\psi\right),\varphi\right\rangle=0,\;\forall\varphi\in W_{A,0}^{s,G}\left(\Omega,\mathbb{C}\right)\right\}.
\end{split}
\end{equation*}

It is easy to known the following equalities by the symmetry:
\begin{equation*}
\begin{split}
&\int_{\mathbb{R}^N}\left(-\Delta\right)^su\left(x\right)\overline{\varphi\left(x\right)}\mathrm dx=2\int_{\mathbb{R}^N}\int_{\mathbb{R}^N}\frac{u\left(x\right)-u\left(y\right)}{\left|x-y\right|^{N+2s}}\mathrm dy\overline{\varphi\left(x\right)}\mathrm dx\\
&\;\;\;\;\;\;\;\;\;\;\;\;\;\;\;\;\;\;\;\;\;\;\;\;\;\;\;\;\;\;\;\;\;\;\;\;\;=\int_{\mathbb{R}^N}\int_{\mathbb{R}^N}D_su\left(x,y\right)\overline{D_s\varphi\left(x,y\right)}\mathrm d\mu,
\end{split}
\end{equation*}
\begin{equation*}
\begin{split}
&\int_{\mathbb{R}^N}\left(-\Delta_g^A\right)^su\left(x\right)\overline{\varphi\left(x\right)}\mathrm dx=2\int_{\mathbb{R}^N}\int_{\mathbb{R}^N}\frac{g\left(\left|D_s^Au\left(x,y\right)\right|\right)}{\left|D_s^Au\left(x,y\right)\right|}D_s^Au\left(x,y\right)\frac{\mathrm dy}{\left|x-y\right|^{N+s}}\overline{\varphi\left(x\right)}\mathrm dx\\
&\;\;\;\;\;\;\;\;\;\;\;\;\;\;\;\;\;\;\;\;\;\;\;\;\;\;\;\;\;\;\;\;\;\;\;\;\;\;\;\;\;\;=\int_{\mathbb{R}^N}\int_{\mathbb{R}^N}\frac{g\left(\left|D_s^Au\left(x,y\right)\right|\right)}{\left|D_s^Au\left(x,y\right)\right|}D_s^Au\left(x,y\right)\overline{D_s^A\varphi\left(x,y\right)}\mathrm d\mu
\end{split}
\end{equation*}
for $\varphi\in W_{A,0}^{s,G}\left(\mathbb{R}^N,\mathbb{C}\right)$.

Now, we give the definition of weak solutions.

\begin{definition}[Weak solutions]\label{Weak solution}
We say that $u=u\left(x,t\right)\in L^\infty\left(0,T;W_{A,0}^{s,G}\left(\Omega,\mathbb{C}\right)\right.$ $\left.\cap H_{0}^s\left(\Omega,\mathbb{C}\right)\right)$ with $u_t\in L^2\left(0,T; H_{0}^s\left(\Omega,\mathbb{C}\right)\right)$ is a weak solution of problem (\ref{1.1}), if $u\left(x,0\right)=u_0\in W_{A,0}^{s,G}\left(\Omega,\mathbb{C}\right)\cap H_{0}^s\left(\Omega,\mathbb{C}\right)$ and
\begin{equation}\label{Weak solution equation}
\begin{split}
&\Re \left[\int_\Omega u_{\mathrm t}\overline\varphi\mathrm dx+\int_QD_s^{}u_t\left(x,y\right)\overline{D_s^{}\varphi\left(x,y\right)}\mathrm d\mu+\int_Q\frac{g\left(\left|D_s^Au\left(x,y\right)\right|\right)}{\left|D_s^Au\left(x,y\right)\right|}D_s^Au\left(x,y\right)\overline{D_s^A\varphi\left(x,y\right)}\mathrm d\mu\right]\\
&= \Re \left[ {\int_\Omega  {{\left| u \right|}^{p - 1}}u\overline \varphi \mathrm  dx  } \right],
\end{split}
\end{equation}
for any $\varphi\in W_{A,0}^{s,G}\left(\Omega,\mathbb{C}\right)\cap H_{0}^s\left(\Omega,\mathbb{C}\right)$.

Moreover, the following equality
\begin{equation}\label{Weak solution equation2}
  \int_0^t\left\|u_\tau\right\|_{s,2,0}^2\mathrm d\tau+J\left(u\right)=J\left(u_0\right)
\end{equation}
holds for $t\in\lbrack0,T)$.
\end{definition}

\begin{remark}\label{remark Weak}
Here are some derivation calculations of $J\left(u\right)$,
\begin{equation*}
\begin{split}
&\frac d{dt}\rho_{s,G}^A\left(u\right)=\Re\left[\int_Q\frac{g\left(\left|D_s^Au\left(x,y\right)\right|\right)}{\left|D_s^Au\left(x,y\right)\right|}D_s^Au\left(x,y\right)\overline{D_s^Au_t\left(x,y\right)}\mathrm d\mu\right],\\
&\frac d{dt}\left(\frac1{p+1}\int_\Omega\left|u\right|^{p+1}\mathrm dx\right)=\Re\left[\int_\Omega\left|u\right|^{p-1}u\overline{u_t}\mathrm dx\right].
\end{split}
\end{equation*}
\end{remark}

\begin{definition}[Maximal existence time]\label{Maximal existence time}
Let $u\left(t\right)$ be a weak solution of problem (\ref{1.1}). Then, for maximal existence time $T$ of $u\left(t\right)$, we have the following inclusion

\emph{(i)} If $u\left(t\right)$ exists for $t\in\left[0,\infty\right)$, then $T=+\infty$;

\emph{(ii)} If there exists a $t_0\in\left(0,\infty\right)$ such that $u\left(t\right)$ exists for $t\in\left[0,t_0\right)$,  but doesn't exist at $t=t_0$, then $T=t_0$.
\end{definition}

\begin{lemma}[\cite{Bonder2,HLieb}]\label{dense}
Let the Orlicz function $G$ satisfy the $\Delta_2$ condition, then $C_0^\infty\left(\Omega\right)$ of smooth functions with compact support is dense in $W_{A,0}^{s,G}\left(\Omega\right)$.
\end{lemma}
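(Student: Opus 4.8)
The plan is to recover $C_0^\infty(\Omega)$ from an arbitrary $u\in W_{A,0}^{s,G}(\Omega)$ by the classical three-step approximation scheme: first truncate $u$ so that it becomes bounded, then deform its support to a compact subset of $\Omega$, and finally mollify. The recurring engine will be the $\Delta_2$ condition on $G$ (and, through the standing assumptions, on $\widetilde G$): dominated convergence and Jensen's inequality only ever deliver \emph{modular} convergence, $\rho_{s,G}^A(u_k-u)\to0$ and $\rho_G(u_k-u)\to0$, and the $\Delta_2$ condition is exactly what upgrades this to \emph{norm} convergence $\|u_k-u\|_{s,G,0}^A\to0$, via Lemma~\ref{inequality1}, since each $\zeta^-(t)=\min\{t^{q^-},t^{q^+}\}$ is continuous, strictly increasing and vanishes only at $t=0$. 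It is also $\Delta_2$ that makes $C_c(\Omega)$ dense in $L^G$ and dilations and translations continuous on $L^G$, both of which are needed in the middle step. By the Poincar\'e inequality for $W_{A,0}^{s,G}(\Omega)$ the norm $\|\cdot\|_{s,G,0}^A$ is equivalent to the seminorm $[\cdot]_{s,G}^A$, so it suffices throughout to make $\rho_{s,G}^A$ of the relevant differences tend to zero.

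First I would make $u$ bounded. Let $\phi_k:\mathbb{C}\to\mathbb{C}$ be the radial cut-off, $\phi_k(z)=z$ for $|z|\le k$ and $\phi_k(z)=kz/|z|$ for $|z|>k$, and set $u_k=\phi_k\circ u$; then $u_k$ still vanishes on $\Omega^c$ and $u_k\in L^\infty(\Omega)$. Since $\phi_k$ is $1$-Lipschitz and commutes with multiplication by unit complex numbers, $|\phi_k(u(x))-e^{i\vartheta}\phi_k(u(y))|\le|u(x)-e^{i\vartheta}u(y)|$ for every real $\vartheta$, so $|D_s^Au_k|\le|D_s^Au|$ pointwise on $Q$ and hence $|D_s^Au_k-D_s^Au|\le 2|D_s^Au|$. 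By $\Delta_2$, $G(|D_s^Au_k-D_s^Au|)\le C\,G(|D_s^Au|)\in L^1(Q,\mathrm d\mu)$, while $u_k\to u$ a.e. forces $D_s^Au_k\to D_s^Au$ a.e.; dominated convergence then gives $\rho_{s,G}^A(u_k-u)\to0$, and likewise $\rho_G(u_k-u)\to0$. So one may assume $u\in L^\infty(\Omega)\cap W_{A,0}^{s,G}(\Omega)$ from now on.

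Next I would push the support into $\Omega$ and mollify. Using a finite partition of unity adapted to the smooth boundary, it suffices to move each localized piece inward; after straightening the boundary this is done by a small dilation $u_\lambda(x)=u(\lambda x)$, $\lambda\downarrow1$ (or equivalently a translation along the inward normal), whose support lies in $\lambda^{-1}\overline\Omega\Subset\Omega$. A change of variables identifies $\rho_{s,G}^A(u_\lambda)$, up to the factors $\lambda^{-N}$ and $\lambda^{s}$, with the magnetic modular of $u$ built from the rescaled field $A_\lambda(\cdot)=\lambda^{-1}A(\cdot/\lambda)$, which converges locally uniformly to $A$ as $\lambda\downarrow1$; together with continuity of dilations in $L^G$ and in $\rho_{s,G}$ (both consequences of $\Delta_2$) and dominated convergence to absorb the phase discrepancy, this yields $u_\lambda\to u$ in $W_{A,0}^{s,G}(\Omega)$. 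Once $\operatorname{supp}u=:K\Subset\Omega$, I would set $u_\varepsilon=u*\varrho_\varepsilon$ with a standard mollifier $\varrho_\varepsilon$ and $\varepsilon<\operatorname{dist}(K,\partial\Omega)$, so $u_\varepsilon\in C_0^\infty(\Omega)$. The one feature absent from the scalar fractional Orlicz--Sobolev theory of \cite{Bahrouni} is that the phase $e^{i(x-y)\cdot A((x+y)/2)}$ does not commute with convolution; I would neutralize it locally by a gauge change, using that on a small ball $B(x_0,r)$ one has $e^{i(x-y)\cdot A((x+y)/2)}=e^{ix\cdot A(x_0)}e^{-iy\cdot A(x_0)}\big(1+O(r)\big)$ with the error governed by $\|\nabla A\|_{L^\infty}$ near $K$, so that after multiplying by the unimodular $e^{-i\,\cdot\,A(x_0)}$ the short-range part of the estimate reduces to the non-magnetic one (the long-range part, where $|x-y|$ is bounded below, being controlled directly by the $L^G$ bound), and the $O(r)$ errors are absorbed by convexity of $G$ and $\Delta_2$. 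Summing the local estimates gives $\rho_{s,G}^A(u_\varepsilon-u)\to0$ and $\rho_G(u_\varepsilon-u)\to0$, hence $u_\varepsilon\to u$ in norm; this is exactly the assertion of \cite{Bonder2,HLieb}.

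The hard part will be the middle step. Because $\rho_{s,G}^A$ is nonlocal — the defining integral runs over all of $Q=(\mathbb{R}^N\times\mathbb{R}^N)\setminus(\Omega^c\times\Omega^c)$ — one cannot localize cavalierly, and deforming the support toward $\partial\Omega$ while keeping the seminorm under control, simultaneously tracking the magnetic phase factor (which has no counterpart in the scalar fractional Orlicz setting), is the only genuinely technical point. Truncation and mollification are routine once $\Delta_2$ is available, and the whole argument collapses without $\Delta_2$ — which is precisely why it is assumed in the statement.
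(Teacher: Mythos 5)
Your three-step truncation--dilation--mollification scheme, with the $\Delta_2$ condition used (via Lemma \ref{inequality1}) to upgrade modular convergence to norm convergence, is exactly the argument the paper intends: its entire proof consists of the remark that one proceeds as in \cite[Theorem 7.22]{HLieb} ``with the obvious modifications'' and that the $\Delta_2$ condition is needed. Your proposal is therefore the same approach, only made explicit, and the delicate point you single out --- deforming the support inward while controlling the nonlocal magnetic seminorm and the phase factor --- is precisely the ``obvious modification'' the paper leaves unwritten.
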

\begin{proof} The proof is similar to the proof process of \cite[Theorem 7.22]{HLieb} with the obvious modifications and the $\Delta_2$ condition needs to be used.
\end{proof}

From Remark \ref{remark $A$and$B$}, we can define
\begin{equation}\label{inequality2}
\frac1{C_\ast}:=\inf_{u\in W_{A,0}^{s,G}\left(\Omega,\mathbb{C}\right),u\neq0}\frac{\left[u\right]_{s,G}^A}{{\left\|u\right\|}_{p+1}}.
\end{equation}
According to the above definitions, we will give the following lemmas and proposition.

\begin{lemma}[Relations between $I_\delta\left(u\right)$ and $\int_Q g \left( {\left| {D_s^Au\left( {x,y} \right)} \right|} \right)\left| {D_s^Au\left( {x,y} \right)} \right|\mathrm d\mu $]\label{Relations between I and U}
For any $u\in W_{A,0}^{s,G}\left(\Omega,\mathbb{C}\right)$, we have

\emph{(i)} if ${0 < \int_Q g \left( {\left| {D_s^Au\left( {x,y} \right)} \right|} \right)\left| {D_s^Au\left( {x,y} \right)} \right|\mathrm d\mu  < h\left( \delta  \right)}$, then $I_\delta\left(u\right)>0$;

\emph{(ii)} if $I_\delta\left(u\right)<0$, then $\int_Q g \left( {\left| {D_s^Au\left( {x,y} \right)} \right|} \right)\left| {D_s^Au\left( {x,y} \right)} \right|\mathrm d\mu>h\left(\delta\right)$;

\emph{(iii)} if $I_\delta\left(u\right)=0$ and $\int_Q g \left( {\left| {D_s^Au\left( {x,y} \right)} \right|} \right)\left| {D_s^Au\left( {x,y} \right)} \right|\mathrm d\mu\neq0$, then $\int_Q g \left( {\left| {D_s^Au\left( {x,y} \right)} \right|} \right)$ $\left| {D_s^Au\left( {x,y} \right)} \right|\mathrm d\mu\geq h\left(\delta\right),$
where $h\left(\delta\right)$ is a function of $\delta$ and satisfies
\begin{equation*}
h\left(\delta\right)=\min\left\{\left(\frac{\left(q^-\right)^\frac{p+1}{q^-}\delta}{C_\ast^{p+1}}\right)^\frac{q^-}{p+1-q^-},\left(\frac{\left(q^-\right)^\frac{p+1}{q^-}\delta}{C_\ast^{p+1}}\right)^\frac{q^+}{p+1-q^+}\right\}.
\end{equation*}
\end{lemma}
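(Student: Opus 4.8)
The statement relates the sign of $I_\delta(u)$ to the size of the quantity
$$
P(u):=\int_Q g\!\left(\left|D_s^Au(x,y)\right|\right)\left|D_s^Au(x,y)\right|\mathrm d\mu,
$$
so the natural first move is to bound the nonlinear term $\int_\Omega|u|^{p+1}\mathrm dx$ from above in terms of $P(u)$. Starting from the definition \eqref{inequality2} of $C_\ast$ we have $\|u\|_{p+1}\le C_\ast[u]_{s,G}^A$, hence $\int_\Omega|u|^{p+1}\mathrm dx\le C_\ast^{p+1}\big([u]_{s,G}^A\big)^{p+1}$. I then need to convert the modular-type quantity $P(u)$ and the norm $[u]_{s,G}^A$ into each other. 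Using the pointwise bound $q^-G(t)\le tg(t)$ from \eqref{2.1.4} one gets $q^-\,\rho_{s,G}^A(u)\le P(u)$, and combining with Lemma~\ref{inequality1}(iv), which gives $\zeta^-\big([u]_{s,G}^A\big)\le\rho_{s,G}^A(u)$, yields $q^-\zeta^-\big([u]_{s,G}^A\big)\le P(u)$. Conversely one can read $\zeta^-$ off as $\zeta^-(r)=\min\{r^{q^-},r^{q^+}\}$, so $\big([u]_{s,G}^A\big)^{p+1}$ is controlled by a suitable power of $P(u)$; this is exactly the source of the two exponents $\frac{q^-}{p+1-q^-}$ and $\frac{q^+}{p+1-q^+}$ appearing in $h(\delta)$ (one comes from the regime $[u]_{s,G}^A\le1$, the other from $[u]_{s,G}^A\ge1$).

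With these estimates in hand, part (i) follows directly: assuming $0<P(u)<h(\delta)$, the chain of inequalities above forces
$$
\int_\Omega|u|^{p+1}\mathrm dx\le C_\ast^{p+1}\big([u]_{s,G}^A\big)^{p+1}
\le \frac{C_\ast^{p+1}}{(q^-)^{\frac{p+1}{q^-}}}\,\max\!\left\{P(u)^{\frac{p+1}{q^-}},P(u)^{\frac{p+1}{q^+}}\right\},
$$
and since $P(u)<h(\delta)$ with $h(\delta)\le1$ (which one checks, shrinking $\delta$ if necessary, or simply noting the definition of $h$), the right-hand side is strictly less than $\delta\,P(u)$ by the very choice of $h(\delta)$ as the minimum of the two threshold values; hence $I_\delta(u)=\delta P(u)-\int_\Omega|u|^{p+1}\mathrm dx>0$. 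Part (ii) is the contrapositive packaging of (i): if $I_\delta(u)<0$ then $\delta P(u)<\int_\Omega|u|^{p+1}\mathrm dx$, so in particular $P(u)>0$, and if one had $P(u)<h(\delta)$ part (i) would give $I_\delta(u)>0$, a contradiction; therefore $P(u)\ge h(\delta)$, and strictness $P(u)>h(\delta)$ comes from the strict inequality $\delta P(u)<\int_\Omega|u|^{p+1}\mathrm dx\le$ (the bound above), which cannot hold at $P(u)=h(\delta)$. Part (iii) is the limiting case: $I_\delta(u)=0$ with $P(u)\neq0$ means $\delta P(u)=\int_\Omega|u|^{p+1}\mathrm dx\le \frac{C_\ast^{p+1}}{(q^-)^{(p+1)/q^-}}\max\{P(u)^{(p+1)/q^-},P(u)^{(p+1)/q^+}\}$, and rearranging this inequality (treating the two cases $P(u)\le1$ and $P(u)\ge1$ separately to pick out which exponent dominates) yields exactly $P(u)\ge h(\delta)$.

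The main obstacle is the careful bookkeeping of the two regimes $[u]_{s,G}^A\le 1$ versus $[u]_{s,G}^A\ge 1$ (equivalently $P(u)\lessgtr$ some constant): the function $\zeta^-$ is piecewise, so the inequality relating $\big([u]_{s,G}^A\big)^{p+1}$ to $P(u)$ has a different exponent in each regime, and one must verify that taking the \emph{minimum} of the two candidate thresholds in the definition of $h(\delta)$ correctly handles both cases simultaneously — i.e.\ that $P(u)<h(\delta)$ really does imply $I_\delta(u)>0$ no matter which regime $u$ falls into. Everything else is a routine application of \eqref{2.1.4}, Lemma~\ref{inequality1}(iv), and the definition \eqref{inequality2} of $C_\ast$; no compactness or variational argument is needed here.
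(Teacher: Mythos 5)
Your proposal is correct and follows essentially the same route as the paper: bound $\int_\Omega|u|^{p+1}\,\mathrm dx$ by $C_\ast^{p+1}\bigl([u]_{s,G}^A\bigr)^{p+1}$, pass to the modular via Lemma~\ref{inequality1}(iv) and then to $\int_Q g\bigl(|D_s^Au|\bigr)|D_s^Au|\,\mathrm d\mu$ via (\ref{2.1.4}), so that $h(\delta)$ is precisely the minimum of the two thresholds at which the resulting upper bound equals $\delta\int_Q g\bigl(|D_s^Au|\bigr)|D_s^Au|\,\mathrm d\mu$, and then read off (i)--(iii) from the strict or non-strict version of this comparison. The parenthetical appeal to $h(\delta)\le 1$ and to ``shrinking $\delta$'' in part (i) is unnecessary (and not available, since $\delta$ is given) and should be dropped; as you yourself note, the monotonicity of $t\mapsto t^{(p+1-q^{\pm})/q^{\pm}}$ together with taking the minimum in the definition of $h(\delta)$ already handles both regimes, which is exactly the paper's argument.
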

\begin{proof}
(i) From ${0 < \int_Q g \left( {\left| {D_s^Au\left( {x,y} \right)} \right|} \right)\left| {D_s^Au\left( {x,y} \right)} \right|{\rm{d}}\mu  < h\left( \delta  \right)}$ and (\ref{2.1.4}), we have
{\small\begin{equation*}
\begin{split}
&\int_\Omega  {\left| u \right|}^{p + 1}\mathrm dx=\left\| u \right\|_{p + 1}^{p + 1} \le {C_ * }^{p + 1}{\left( {\left[ {\rm{u}} \right]_{s,G}^A} \right)}^{p + 1}\\
&\le{C_*}^{p+1}\max \left\{{\left( {\int_Q G \left( {\left| {D_s^Au\left( {x,y} \right)} \right|} \right)\mathrm d\mu } \right)}^{\frac{p + 1}{q^ + }},{\left( {\int_Q G \left( {\left| {D_s^Au\left( {x,y} \right)} \right|} \right)\mathrm d\mu } \right)}^{\frac{{p + 1}}{q^ - }} \right\}\\
&\le \frac{{C_ * }^{p + 1}}{\left( {{q^ - }} \right)^{\frac{p + 1}{q^ - }}}\int_Q g \left( {\left| {D_s^Au\left( {x,y} \right)} \right|} \right)\left| {D_s^Au\left( {x,y} \right)} \right|\mathrm d\mu \max \left\{ {\left( {\int_Q g \left( {\left| {D_s^Au\left( {x,y} \right)} \right|} \right)\left| {D_s^Au\left( {x,y} \right)} \right|\mathrm d\mu } \right)}^{\frac{p+1-q^+}{q^+}}, \right.\\
&\;\;\;\left. {\left( {\int_Q g \left( {\left| {D_s^Au\left( {x,y} \right)} \right|} \right)\left| {D_s^Au\left( {x,y} \right)} \right|\mathrm d\mu } \right)}^{\frac{p + 1 - {q^ - }}{q^ - }} \right\}\\
&< \frac{{C_ * }^{p + 1}}{\left( {{q^ - }} \right)^{\frac{p + 1}{q^ - }}}\int_Q g \left( {\left| {D_s^Au\left( {x,y} \right)} \right|} \right)\left| {D_s^Au\left( {x,y} \right)} \right|\mathrm d\mu \max \left\{ {{{\left( {h\left( \delta  \right)} \right)}^{\frac{p + 1 - {q^ + }}{q^ + }}},{\left( {h\left( \delta  \right)} \right)}^{\frac{p + 1 - {q^ - }}{q^ - }}} \right\}\\
& = \delta  {\int_Q g \left( {\left| {D_s^Au\left( {x,y} \right)} \right|} \right)\left| {D_s^Au\left( {x,y} \right)} \right|\mathrm d\mu  } ,
\end{split}
\end{equation*}}
i.e., $I_\delta\left(u\right)>0$.

(ii)  By $I_\delta\left(u\right)<0$, we know that $\int_Q g \left( {\left| {D_s^Au\left( {x,y} \right)} \right|} \right)\left| {D_s^Au\left( {x,y} \right)} \right|\mathrm d\mu\neq0$. Then,
\small\begin{equation*}
\begin{split}
&\delta\int_Qg\left(\left|D_s^Au\left(x,y\right)\right|\right)\left|D_s^Au\left(x,y\right)\right|\mathrm d\mu< \int_\Omega  {{\left| u \right|}^{p + 1}}\mathrm dx \\
&\leq \frac{{C_ * }^{p + 1}}{\left( {{q^ - }} \right)^{\frac{p + 1}{q^ - }}}\int_Q g \left( {\left| {D_s^Au\left( {x,y} \right)} \right|} \right)\left| {D_s^Au\left( {x,y} \right)} \right|\mathrm d\mu\max \left\{ {\left( {\int_Q g \left( {\left| {D_s^Au\left( {x,y} \right)} \right|} \right)\left| {D_s^Au\left( {x,y} \right)} \right|\mathrm d\mu } \right)}^{\frac{p + 1 - {q^ + }}{q^ + }},\right.  \\
&\left.{{\left( {\int_Q g \left( {\left| {D_s^Au\left( {x,y} \right)} \right|} \right)\left| {D_s^Au\left( {x,y} \right)} \right|\mathrm d\mu } \right)}^{\frac{p + 1 - {q^ - }}{q^ - }}} \right\},
\end{split}
\end{equation*}
which implies $\int_Q g \left( {\left| {D_s^Au\left( {x,y} \right)} \right|} \right)\left| {D_s^Au\left( {x,y} \right)} \right|\mathrm d\mu>h\left(\delta\right)$.

(iii) If $I_\delta\left(u\right)=0$, $\int_Q g \left( {\left| {D_s^Au\left( {x,y} \right)} \right|} \right)\left| {D_s^Au\left( {x,y} \right)} \right|\mathrm \mu\neq0$,  we obtain
\small\begin{equation*}
\begin{split}
&\delta\int_Q g \left( {\left| {D_s^Au\left( {x,y} \right)} \right|} \right)\left| {D_s^Au\left( {x,y} \right)} \right|\mathrm d\mu= \int_\Omega  {{{\left| u \right|}^{p + 1}}}\mathrm dx\\
&\leq\frac{{C_ * }^{p + 1}}{\left( {{q^ - }} \right)^{\frac{p + 1}{q^ - }}}\int_Q g \left( {\left| {D_s^Au\left( {x,y} \right)} \right|} \right)\left| {D_s^Au\left( {x,y} \right)} \right|\mathrm d\mu\max \left\{ {{\left( {\int_Q g \left( {\left| {D_s^Au\left( {x,y} \right)} \right|} \right)\left| {D_s^Au\left( {x,y} \right)} \right|{\rm{d}}\mu } \right)}^{\frac{p + 1 - {q^ + }}{q^ + }}}, \right. \\
&\left.{\left( {\int_Q g \left( {\left| {D_s^Au\left( {x,y} \right)} \right|} \right)\left| {D_s^Au\left( {x,y} \right)} \right|\mathrm d\mu } \right)}^{\frac{p + 1 - {q^ - }}{q^ - }} \right\},
\end{split}
\end{equation*}
which implies $\int_Q g \left( {\left| {D_s^Au\left( {x,y} \right)} \right|} \right)\left| {D_s^Au\left( {x,y} \right)} \right|\mathrm d\mu \ge h\left(\delta\right)$.
\end{proof}

\begin{lemma}[Properties of $J(\lambda u)$]\label{Properties of J}
Assume that $u\in W_{A,0}^{s,G}\left(\Omega,\mathbb{C}\right)$ and $\int_\Omega  {{{\left| u \right|}^{p + 1}}}\mathrm dx\neq0$, there exists a constant $\beta\in\left[q^--1,q^+-1\right]$, such that

\emph{(i)} $\underset{\lambda\rightarrow0}{\mathrm{lim}}J\left(\lambda u\right)=0,\;\underset{\lambda\rightarrow+\infty}{\mathrm{lim}}J\left(\lambda u\right)=-\infty$. In addition, there exists a $\lambda^\ast$ with
\begin{equation*}
{\lambda ^ * }: = {\left( {\frac{\int_Q {g\left( {\left| {D_s^Au\left( {x,y} \right)} \right|} \right) \left| {D_s^Au\left( {x,y} \right)} \right|}\mathrm d\mu }{\int_\Omega  {{{\left| u \right|}^{p + 1}}}\mathrm dx}} \right)^{\frac{1}{p  -\beta  }}},
\end{equation*}
such that ${\left.\frac{dJ\left(\lambda u\right)}{d\lambda}\right|}_{\lambda=\lambda^\ast}=0$, and $J\left(\lambda u\right)$ is decreasing on $\lambda\in\lbrack\lambda^\ast,\infty)$, increasing on $\lambda\in\left[0,\lambda^\ast\right]$, $\lambda=\lambda^\ast$ is the maximum point of $J\left(\lambda u\right)$;

\emph{(ii)}$I\left(\lambda^\ast u\right)=0$, $I\left(\lambda u\right)<0$ for $\lambda^\ast<\lambda<\infty$ and $I\left(\lambda u\right)>0$ for $0<\lambda<\lambda^\ast$.
\end{lemma}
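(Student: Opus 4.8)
Throughout, fix $u\in W_{A,0}^{s,G}\left(\Omega,\mathbb{C}\right)$ with $\int_\Omega|u|^{p+1}\mathrm dx\neq0$; this forces $u\neq0$, so that by \eqref{2.1.4} (which gives $G(t)\le tg(t)/q^-$ pointwise) and Lemma~\ref{inequality1}(iv) one has $m:=\int_Q g\bigl(|D_s^Au(x,y)|\bigr)|D_s^Au(x,y)|\,\mathrm d\mu>0$. I would reduce the whole lemma to the study of the single–variable function $j(\lambda):=J(\lambda u)$, $\lambda\ge0$, together with the auxiliary function $\phi(\lambda):=\int_Q g\bigl(\lambda|D_s^Au(x,y)|\bigr)|D_s^Au(x,y)|\,\mathrm d\mu$ (so $\phi(1)=m$). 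Using $D_s^A(\lambda u)=\lambda D_s^Au$,
\begin{equation*}
 j(\lambda)=\int_Q G\bigl(\lambda|D_s^Au(x,y)|\bigr)\,\mathrm d\mu-\frac{\lambda^{p+1}}{p+1}\int_\Omega|u|^{p+1}\mathrm dx,
\end{equation*}
and differentiating under the integral sign (legitimate since $G$ satisfies the $\Delta_2$ condition, cf. Remark~\ref{remark Weak}) gives the identity
\begin{equation*}
 j'(\lambda)=\phi(\lambda)-\lambda^{p}\int_\Omega|u|^{p+1}\mathrm dx=\frac1\lambda\,I(\lambda u),\qquad\lambda>0,
\end{equation*}
which is consistent with \eqref{2.22} applied to $\psi=\lambda u$. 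Everything in (i) and (ii) will be read off from $j'$.

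The two limits in (i) follow from the power bounds \eqref{2.1.7}. For $0<\lambda\le1$, $\int_Q G(\lambda|D_s^Au|)\mathrm d\mu\le\lambda^{q^-}\int_Q G(|D_s^Au|)\mathrm d\mu$, and since $q^->1$ both terms of $j(\lambda)$ tend to $0$, so $\underset{\lambda\to0}{\lim}j(\lambda)=0$. For $\lambda\ge1$, $\int_Q G(\lambda|D_s^Au|)\mathrm d\mu\le\lambda^{q^+}\int_Q G(|D_s^Au|)\mathrm d\mu$, hence
\begin{equation*}
 j(\lambda)\le\lambda^{q^+}\Bigl(\int_Q G(|D_s^Au|)\mathrm d\mu-\frac{\lambda^{p+1-q^+}}{p+1}\int_\Omega|u|^{p+1}\mathrm dx\Bigr)\longrightarrow-\infty
\end{equation*}
because $q^+<p+1$ by $(H_1)$ and $\int_\Omega|u|^{p+1}\mathrm dx>0$.

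For the critical point and the monotonicity I would introduce $\Theta(\lambda):=\lambda^{-p}\phi(\lambda)$, so that $j'(\lambda)=\lambda^{p}\bigl(\Theta(\lambda)-\int_\Omega|u|^{p+1}\mathrm dx\bigr)$ and, by the identity above, $I(\lambda u)=\lambda^{p+1}\bigl(\Theta(\lambda)-\int_\Omega|u|^{p+1}\mathrm dx\bigr)$. Applying \eqref{2.1.6} with $a=\lambda_2/\lambda_1>1$ to the integrand of $\phi$ yields $\phi(\lambda_2)\le(\lambda_2/\lambda_1)^{q^+-1}\phi(\lambda_1)$ for $0<\lambda_1<\lambda_2$, whence $\Theta(\lambda_2)/\Theta(\lambda_1)\le(\lambda_2/\lambda_1)^{q^+-1-p}<1$ (using $q^+-1<p$ from $(H_1)$ and $\phi(\lambda_1)>0$); thus $\Theta$ is strictly decreasing on $(0,\infty)$. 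The same inequality \eqref{2.1.6} gives $\Theta(\lambda)\ge\lambda^{q^+-1-p}m\to+\infty$ as $\lambda\to0^+$ and $\Theta(\lambda)\le\lambda^{q^+-1-p}m\to0$ as $\lambda\to+\infty$, so $\Theta$ is a continuous strictly decreasing bijection of $(0,\infty)$ onto $(0,\infty)$. Therefore there is a unique $\lambda^\ast>0$ with $\Theta(\lambda^\ast)=\int_\Omega|u|^{p+1}\mathrm dx$; consequently $j'>0$ on $(0,\lambda^\ast)$, $j'(\lambda^\ast)=0$, $j'<0$ on $(\lambda^\ast,\infty)$, which gives the monotonicity and maximality claims in (i) and, through $I(\lambda u)=\lambda j'(\lambda)$, all the sign statements in (ii).

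Finally, to match $\lambda^\ast$ with the displayed expression, note that $j'(\lambda^\ast)=0$ reads $\phi(\lambda^\ast)=(\lambda^\ast)^{p}\int_\Omega|u|^{p+1}\mathrm dx$. By \eqref{2.1.6}, $\phi(\lambda^\ast)/m$ lies between $(\lambda^\ast)^{q^--1}$ and $(\lambda^\ast)^{q^+-1}$; since $\beta\mapsto(\lambda^\ast)^{\beta}$ is continuous and attains every value between these endpoints as $\beta$ runs over $[q^--1,q^+-1]$, the intermediate value theorem furnishes $\beta\in[q^--1,q^+-1]$ with $(\lambda^\ast)^{\beta}=\phi(\lambda^\ast)/m$ (any $\beta$ works when $\lambda^\ast=1$). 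Combining the two displays gives $(\lambda^\ast)^{p-\beta}=m/\int_\Omega|u|^{p+1}\mathrm dx$, and since $\beta\le q^+-1<p$ we solve for $\lambda^\ast$ to recover exactly the stated formula. The main obstacle is the strict monotonicity of $\Theta$, equivalently that $j'$ vanishes exactly once: because $g$ is a general $N$-function, $\phi$ has no closed form, so one must rely entirely on the power-comparison estimates \eqref{2.1.6}--\eqref{2.1.7} and the structural bound $q^+-1<p$ supplied by $(H_1)$. A minor point of care is the order of quantifiers: although $\beta$ is stated before $\lambda^\ast$ in the lemma, it is cleanest to first construct the unique critical point $\lambda^\ast$ and only afterwards extract $\beta$ by the intermediate value theorem, which avoids any circularity.
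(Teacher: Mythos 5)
Your proposal is correct, and it takes a genuinely different route from the paper at the key step. The paper computes $\frac{\mathrm dJ(\lambda u)}{\mathrm d\lambda}=\int_Q g(\lambda|D_s^Au|)|D_s^Au|\,\mathrm d\mu-\lambda^p\int_\Omega|u|^{p+1}\mathrm dx$ and then, invoking ``the mean value theorem of integrals'' together with \eqref{2.1.6}, rewrites the first term as $\lambda^\beta\int_Q g(|D_s^Au|)|D_s^Au|\,\mathrm d\mu$ for a single $\beta\in[q^--1,q^+-1]$, reading off the sign change of $j'$ and the formula for $\lambda^\ast$ from the resulting one-line expression. Strictly speaking that $\beta$ depends on $\lambda$, so the paper's deduction that $j'$ changes sign exactly once at $\lambda^\ast$ is compressed; your argument supplies exactly the missing monotonicity: you prove from \eqref{2.1.6} and $q^+-1<p$ that $\Theta(\lambda)=\lambda^{-p}\phi(\lambda)$ is a strictly decreasing bijection of $(0,\infty)$ onto itself, which yields a \emph{unique} critical point, the full sign analysis of $j'$ and $I(\lambda u)=\lambda j'(\lambda)$, and only afterwards do you extract a fixed $\beta$ at $\lambda=\lambda^\ast$ by the intermediate value theorem to recover the displayed formula. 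The paper's route is shorter; yours is airtight on the quantifier over $\beta$ and additionally establishes uniqueness of $\lambda^\ast$, which the paper only implies. The limits at $0$ and $\infty$ are handled the same way in both (via \eqref{2.1.7} and $q^+<p+1$), and your identity $I(\lambda u)=\lambda\,\mathrm dJ(\lambda u)/\mathrm d\lambda$ matches the paper's concluding display, so part (ii) follows identically.
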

\begin{proof}
From the definition of $J(u)$ and (\ref{2.1.7}), we get
 \begin{equation*}
\begin{split}
&J\left( {\lambda u} \right) = \int_Q G \left( {\lambda \left| {D_s^Au\left( {x,y} \right)} \right|} \right)\mathrm  d\mu - \frac{\lambda ^{p + 1}}{p + 1}\int_\Omega  {{\left| u \right|}^{p + 1}}\mathrm  dx\\
&\;\;\;\;\;\;\;\;\;\;\le \max \left\{ {{\lambda ^{q^ - }},{\lambda ^{q^ + }}} \right\}\int_Q G \left( {\left| {D_s^Au\left( {x,y} \right)} \right|} \right)\mathrm  d\mu- \frac{\lambda ^{p + 1}}{p + 1}\int_\Omega  {{\left| u \right|}^{p + 1}}\mathrm  dx,\\
\end{split}
 \end{equation*}
which yields $\underset{\lambda\rightarrow0}{\mathrm{lim}}J\left(\lambda u\right)=0$ and
$\underset{\lambda\rightarrow+\infty}{\mathrm{lim}}J\left(\lambda u\right)=-\infty$.
In addition, by the mean value theorem of integrals and (\ref{2.1.6}), we have
\begin{equation}\label{Properties of J1}
\begin{split}
&\frac{{\mathrm dJ\left( {\lambda u} \right)}}{\mathrm d\lambda}= \int_Q g \left( {\lambda \left| {D_s^Au\left( {x,y} \right)} \right|} \right) \left| {D_s^Au\left( {x,y} \right)} \right|\mathrm d\mu - {\lambda ^p}\int_\Omega  {{{\left| u \right|}^{p + 1}}}\mathrm  dx\\
&\;\;\;\;\;\;\;\;\;\;\;\;= {\lambda ^\beta }\int_Q {g\left( {\left| {D_s^Au\left( {x,y} \right)} \right|} \right) \left| {D_s^Au\left( {x,y} \right)} \right|}  \mathrm d\mu  - {\lambda ^p}\int_\Omega  {{{\left| u \right|}^{p + 1}}}\mathrm  dx\\
&\;\;\;\;\;\;\;\;\;\;\;\;= {\lambda ^\beta }\left( {\int_Q {g\left( {\left| {D_s^Au\left( {x,y} \right)} \right|} \right) \left| {D_s^Au\left( {x,y} \right)} \right|}\mathrm  d\mu  - {\lambda ^{p  - \beta  }}\int_\Omega  {{{\left| u \right|}^{p + 1}}}\mathrm  dx} \right),
\end{split}
\end{equation}
where $\beta\in\left[q^--1,q^+-1\right]$.
When ${\left.\frac{dJ\left(\lambda u\right)}{d\lambda}\right|}_{\lambda=\lambda^\ast}=0$, we obtain
\begin{equation*}
{\lambda ^ * } = {\left( {\frac{\int_Q {g\left( {\left| {D_s^Au\left( {x,y} \right)} \right|} \right) \left| {D_s^Au\left( {x,y} \right)} \right|}\mathrm d\mu  }{\int_\Omega  {{\left| u \right|}^{p + 1}} \mathrm dx}} \right)^{\frac{1}{p  - \beta  }}}.
\end{equation*}

From (\ref{Properties of J1}), we can infer that
\begin{equation*}
\frac{\mathrm dJ\left(\lambda u\right)}{\mathrm d\lambda}>0,\;\;\;\mathrm{for}\;0<\lambda<\lambda^\ast;\quad\frac{\mathrm dJ\left(\lambda u\right)}{\mathrm d\lambda}<0,\;\;\;\mathrm{for}\;\lambda^\ast<\lambda<\infty,
\end{equation*}
which implies
\begin{equation*}
I\left(\lambda u\right)=\lambda\frac{\mathrm dJ\left(\lambda u\right)}{d\lambda}=\left\{
\begin{split}
&>0,\;\;\;0<\lambda<\lambda^\ast;\\
&<0,\;\;\;\lambda^\ast<\lambda<\infty.
\end{split}\right.
\end{equation*}
\end{proof}

Let $M:=\left(\frac1{q^+}-\frac1{p+1}\right)\min\left\{h\left(1\right),1\right\}$, we have the following lemma.

\begin{lemma}[Depth $d$ of potential well]\label{Depth d}
For the depth of the potential well $d$, we have
\begin{equation*}
d\geq M.
\end{equation*}
\end{lemma}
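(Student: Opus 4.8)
The plan is to bound $J(u)$ from below for an arbitrary $u\in\mathcal N$, using only the defining identity $I(u)=0$, the $\Delta_2$-type bound contained in (\ref{2.1.4}), and the lower bound on the modular term furnished by Lemma \ref{Relations between I and U}. No compactness or variational machinery is needed: this is a direct pointwise-to-integral estimate followed by passing to the infimum.

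First I would fix $u\in\mathcal N$, so that $u\neq0$ and $I(u)=0$. Since $u\neq0$ in $W_{A,0}^{s,G}(\Omega,\mathbb C)$, the Poincar\'e inequality recalled in Remark \ref{remark $A$and$B$} gives $[u]_{s,G}^A>0$, whence Lemma \ref{inequality1}(iv) forces $\rho_{s,G}^A(u)>0$; in particular
\begin{equation*}
X:=\int_Q g\!\left(\left|D_s^Au(x,y)\right|\right)\left|D_s^Au(x,y)\right|\mathrm d\mu>0 .
\end{equation*}
From $I(u)=0$ we have $\int_\Omega|u|^{p+1}\mathrm dx=X$. Next, the right-hand inequality in (\ref{2.1.4}), namely $tg(t)\le q^+G(t)$ for all $t$, yields $G(t)\ge\frac1{q^+}\,tg(t)$; integrating this against $\mathrm d\mu$ over $Q$ gives $\rho_{s,G}^A(u)\ge\frac1{q^+}X$. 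Substituting into the definition of $J$,
\begin{equation*}
J(u)=\rho_{s,G}^A(u)-\frac1{p+1}\int_\Omega|u|^{p+1}\mathrm dx\ \ge\ \frac1{q^+}X-\frac1{p+1}X=\left(\frac1{q^+}-\frac1{p+1}\right)X ,
\end{equation*}
and the coefficient $\frac1{q^+}-\frac1{p+1}$ is strictly positive because $q^+<p+1$ by $(H_1)$.

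Finally, since $I(u)=I_1(u)=0$ and $X\neq0$, Lemma \ref{Relations between I and U}(iii) with $\delta=1$ gives $X\ge h(1)$, hence
\begin{equation*}
J(u)\ \ge\ \left(\frac1{q^+}-\frac1{p+1}\right)h(1)\ \ge\ \left(\frac1{q^+}-\frac1{p+1}\right)\min\{h(1),1\}=M .
\end{equation*}
Taking the infimum over $u\in\mathcal N$ yields $d\ge M$ (if $\mathcal N=\emptyset$ then $d=+\infty$ and the claim is trivial, although in fact $\mathcal N\neq\emptyset$ by Lemma \ref{Properties of J}). The only points that need care are (a) verifying $X>0$ for every $u\in\mathcal N$, which is exactly where nontriviality of $u$ together with the equivalence of $\|\cdot\|_{s,G,0}^A$ and $[\cdot]_{s,G}^A$ enters, and (b) the harmless replacement of $h(1)$ by $\min\{h(1),1\}$, which only weakens the bound and is presumably kept so that $M$ takes the form used in later sections. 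I do not expect any genuine analytic obstacle beyond this bookkeeping.
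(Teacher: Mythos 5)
Your proposal is correct and follows essentially the same route as the paper: use $I(u)=0$ together with the upper bound in (\ref{2.1.4}) to get $J(u)\ge\left(\frac1{q^+}-\frac1{p+1}\right)\int_Q g\left(\left|D_s^Au\right|\right)\left|D_s^Au\right|\mathrm d\mu$, then invoke Lemma \ref{Relations between I and U}(iii) with $\delta=1$ and take the infimum over $\mathcal N$. The only difference is that you spell out the nonvanishing of the modular term and the harmless $\min\{h(1),1\}$ step, which the paper leaves implicit.
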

\begin{proof}
If $u\in \mathcal N$, then $I\left(u\right)=0$ and $u\neq 0$. From the definitions of $J\left(u\right)$ and $I\left(u\right)$, combined with (\ref{2.1.4}), we have
\begin{equation*}
\int_Q g \left( {\left| {D_s^Au\left( {x,y} \right)} \right|} \right)\left| {D_s^Au\left( {x,y} \right)} \right|\mathrm d\mu =\int_\Omega  {{{\left| u \right|}^{p + 1}}}\mathrm dx
\end{equation*}
 and
\begin{equation*}
\begin{split}
&J\left( u \right) \ge \frac{1}{q^ + }\int_Q g \left( {\left| {D_s^Au\left( {x,y} \right)} \right|} \right)\left| {D_s^Au\left( {x,y} \right)} \right|\mathrm d\mu- \frac{1}{p + 1}\int_\Omega  {{\left| u \right|}^{p + 1}}\mathrm dx\\
&\;\;\;\;\;\;\;\; = \left( {\frac{1}{q^ + } - \frac{1}{p + 1}} \right)\int_Q g \left( {\left| {D_s^Au\left( {x,y} \right)} \right|} \right)\left| {D_s^Au\left( {x,y} \right)} \right|\mathrm d\mu .
\end{split}
\end{equation*}

From Lemma \ref{Relations between I and U} (iii) and (\ref{2.1.4}), we obtain
\begin{equation}\label{The relationship between d and J}
\begin{split}
&d = \inf\limits_{u \in {\cal N}} J(u)\\
&\;\;\ge \inf\limits_{u \in {\cal N}} \left[ {\left( {\frac{1}{q^ + } - \frac{1}{p + 1}} \right)\int_Q g \left( {\left| {D_s^Au\left( {x,y} \right)} \right|} \right)\left| {D_s^Au\left( {x,y} \right)} \right|{\rm{d}}\mu  } \right]\\
&\;\; \geq\left(\frac1{q^+}-\frac1{p+1}\right)h\left(1\right),
\end{split}
\end{equation}
which yields
\begin{equation*}
d\geq M.
\end{equation*}
\end{proof}

\begin{remark}
If $\frac{tg\left(t\right)}{G\left(t\right)}$ is a constant, we can define $M$ more precisely as $\left(\frac1{q^+}-\frac1{p+1}\right)h\left(1\right)$,
then $d=M$ (see \cite[Lemma 2.3]{Cheng}). In this case, $d$ is called critical initial energy and $M$ is called sharp-critical initial energy, respectively.
\end{remark}

\begin{lemma}\label{d properties}
Let $d(\delta)$ be defined as in (\ref{C define}), we have

\emph{(i)} $d(\delta)\geq\frac1{q^+}(1-\delta)h(\delta)+\frac{p+1-q^+}{q^+\left(p+1\right)}\delta h(\delta)$ for $0<\delta\leq1$. In particular, $d(1)\geq M$;

\emph{(ii)} there is a constant $b$, $b\in(1,\frac{p+1}{q^-}\rbrack$ such that $d(b)=0$, $d(\delta)$ is decreasing on $1\leq\delta\leq b$ and increasing on $0<\delta\leq1$.
\end{lemma}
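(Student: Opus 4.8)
The plan is to reduce both parts to the one–parameter rescaling $\lambda\mapsto J(\lambda u)$ already analysed in Lemma \ref{Properties of J}, using that the constraint $I_\delta(\lambda u)=0$ pins the scaling factor down uniquely. For part (i) I would take $u\in\mathcal N_\delta$ with $0<\delta\le1$. Then $u\neq0$, hence $\int_Q g(|D_s^Au(x,y)|)|D_s^Au(x,y)|\,\mathrm d\mu>0$ (if it were $0$ then $D_s^Au\equiv0$, and the Poincar\'e inequality of Remark \ref{remark $A$and$B$} would force $u=0$), and $I_\delta(u)=0$ reads $\int_\Omega|u|^{p+1}\,\mathrm dx=\delta\int_Q g(|D_s^Au(x,y)|)|D_s^Au(x,y)|\,\mathrm d\mu$. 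Combining this with $\rho_{s,G}^A(u)\ge\frac1{q^+}\int_Q g(|D_s^Au(x,y)|)|D_s^Au(x,y)|\,\mathrm d\mu$, which is immediate from (\ref{2.1.4}), gives
\begin{equation*}
J(u)\ge\left(\frac1{q^+}-\frac\delta{p+1}\right)\int_Q g(|D_s^Au(x,y)|)|D_s^Au(x,y)|\,\mathrm d\mu=\left(\frac{1-\delta}{q^+}+\frac{p+1-q^+}{q^+(p+1)}\delta\right)\int_Q g(|D_s^Au(x,y)|)|D_s^Au(x,y)|\,\mathrm d\mu .
\end{equation*}
Since $q^+<p+1$ by $(H_1)$ and $0<\delta\le1$, the coefficient is nonnegative, so I can bound the remaining integral below by $h(\delta)$ via Lemma \ref{Relations between I and U}(iii). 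Taking the infimum over $\mathcal N_\delta$ yields the stated inequality; at $\delta=1$, using $h(1)\ge\min\{h(1),1\}$, it becomes $d(1)\ge\left(\frac1{q^+}-\frac1{p+1}\right)\min\{h(1),1\}=M$.

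For the monotonicity in part (ii), I would fix $u$ with $\int_\Omega|u|^{p+1}\,\mathrm dx\neq0$ and argue as in the proof of Lemma \ref{Properties of J}, but using (\ref{2.1.5}) in place of (\ref{2.1.6}): this shows $\lambda\mapsto\lambda^{-p}\int_Q g(\lambda|D_s^Au|)|D_s^Au|\,\mathrm d\mu$ is strictly decreasing from $+\infty$ to $0$ (where $q^+-1<p$ from $(H_1)$ is what is used), so there is a unique $\lambda_\delta=\lambda_\delta(u)>0$ with $\lambda_\delta u\in\mathcal N_\delta$, and $\delta\mapsto\lambda_\delta$ is continuous and strictly increasing. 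Because every element of $\mathcal N_\delta$ has this form, $d(\delta)=\inf\{J(\lambda_\delta(u)u):\int_\Omega|u|^{p+1}\,\mathrm dx\neq0\}$. For fixed $u$, the chain rule and $\langle J'(v),v\rangle=I(v)$ give
\begin{equation*}
\frac{\mathrm d}{\mathrm d\delta}J(\lambda_\delta u)=\frac1{\lambda_\delta}\,I(\lambda_\delta u)\,\frac{\mathrm d\lambda_\delta}{\mathrm d\delta},\qquad I(\lambda_\delta u)=(1-\delta)\int_Q g(|D_s^A(\lambda_\delta u)|)|D_s^A(\lambda_\delta u)|\,\mathrm d\mu ,
\end{equation*}
the second identity because $I_\delta(\lambda_\delta u)=0$. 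Since $\mathrm d\lambda_\delta/\mathrm d\delta>0$, each map $\delta\mapsto J(\lambda_\delta u)$ is increasing on $(0,1]$ and decreasing on $[1,\infty)$, and the infimum $d(\delta)$ inherits this.

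To locate $b$, I note that by the computation in part (i), for $1\le\delta<\frac{p+1}{q^+}$ the coefficient $\frac1{q^+}-\frac\delta{p+1}$ is still positive, so $d(\delta)\ge\left(\frac1{q^+}-\frac\delta{p+1}\right)h(\delta)>0$; thus $d>0$ on $(0,\frac{p+1}{q^+})$. On the other hand, for $u\in\mathcal N_\delta$ the bound $\rho_{s,G}^A(u)\le\frac1{q^-}\int_Q g(|D_s^Au(x,y)|)|D_s^Au(x,y)|\,\mathrm d\mu$ (again from (\ref{2.1.4})) gives $J(u)\le\left(\frac1{q^-}-\frac\delta{p+1}\right)\int_Q g(|D_s^Au(x,y)|)|D_s^Au(x,y)|\,\mathrm d\mu$, which is $\le0$ at $\delta=\frac{p+1}{q^-}$, so $d\big(\frac{p+1}{q^-}\big)\le0$. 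Since $d$ is nonincreasing on $[1,\infty)$, positive at $\delta=1$ and nonpositive at $\delta=\frac{p+1}{q^-}$, once its continuity on $[1,\frac{p+1}{q^-}]$ is established it must vanish at some $b\in[\frac{p+1}{q^+},\frac{p+1}{q^-}]\subset(1,\frac{p+1}{q^-}]$, and the monotonicity above then gives that $d$ is increasing on $(0,1]$ and decreasing on $[1,b]$. The hard part of the whole argument is exactly this continuity near the crossing value $b$, i.e. ruling out a jump of $d$ across $0$: upper semicontinuity is free from the reparametrisation $d(\delta)=\inf_{v\in\mathcal N}J(\lambda_\delta(v)v)$ (an infimum of continuous functions of $\delta$), but for the reverse one must show that a minimising sequence $\{u_n\}\subset\mathcal N_\delta$ with $J(u_n)\to d(\delta)>0$ stays bounded in $W_{A,0}^{s,G}(\Omega,\mathbb C)$, after which the compact embedding $W_{A,0}^{s,G}(\Omega,\mathbb C)\hookrightarrow L^{p+1}(\Omega)$ of Remark \ref{remark $A$and$B$} and the weak lower semicontinuity of $\rho_{s,G}^A$ produce a minimiser and, by a routine rescaling comparison, continuity. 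This boundedness is immediate when $\delta\notin[\frac{p+1}{q^+},\frac{p+1}{q^-}]$ from the two–sided estimate $\frac1{q^+}\int_Q g(|D_s^Au|)|D_s^Au|\,\mathrm d\mu\le\rho_{s,G}^A(u)\le\frac1{q^-}\int_Q g(|D_s^Au|)|D_s^Au|\,\mathrm d\mu$ together with $J(u_n)$ bounded and $I_\delta(u_n)=0$; inside the band $[\frac{p+1}{q^+},\frac{p+1}{q^-}]$ one must exploit that $d>0$ there, and it is precisely here that the strict gap $q^+<p+1$ of $(H_1)$ is essential. (When $\frac{tg(t)}{G(t)}$ is constant this band collapses to the single point $\frac{p+1}{q^-}$, on which $J\equiv0$ over $\mathcal N_{(p+1)/q^-}$, so $b=\frac{p+1}{q^-}$ trivially, recovering the equality case of Lemma \ref{Depth d}.)
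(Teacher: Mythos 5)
Your proposal is correct and follows essentially the same route as the paper's proof: part (i) is the same substitution of $I_\delta(u)=0$ into $J(u)$ combined with (\ref{2.1.4}) and Lemma \ref{Relations between I and U}(iii), and part (ii) rests on the same rescaling $\lambda(\delta)$, the same sign analysis of $(1-\delta)\int_Q g(\lambda|D_s^Au|)|D_s^Au|\,\mathrm d\mu$ (you differentiate in $\delta$ where the paper writes the difference of $J$-values as an integral over $\lambda$), and the same endpoint estimate showing $d\bigl(\tfrac{p+1}{q^-}\bigr)\le 0$. The continuity of $\delta\mapsto d(\delta)$ that you rightly single out as the remaining ingredient for the intermediate-value step is simply asserted without proof in the paper, so your treatment is, if anything, the more careful one.
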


\begin{proof}
(i) If $u\in \cal N_\delta$, that is, $I_\delta\left(u\right)=0$ and $\int_Q g \left( {\left| {D_s^Au\left( {x,y} \right)} \right|} \right)\left| {D_s^Au\left( {x,y} \right)} \right|{\rm{d}}\mu\neq0$. From Lemma \ref{Relations between I and U} (iii) and (\ref{2.1.4}), we have
\begin{equation*}
\begin{split}
&J\left( u \right) = \int_Q {\left( {1 - \delta } \right)G} \left( {\left| {D_s^Au\left( {x,y} \right)} \right|} \right)d\mu+ \int_Q {\delta G} \left( {\left| {D_s^Au\left( {x,y} \right)} \right|} \right)\mathrm d\mu- \frac{1}{{p + 1}}\int_\Omega  {\left| u \right|}^{p + 1}\mathrm dx\\
&\;\;\;\;\;\;\; \ge \frac{1}{q^ + }\left( {1 - \delta } \right)\int_Q g \left( {\left| {D_s^Au\left( {x,y} \right)} \right|} \right)\left| {D_s^Au\left( {x,y} \right)} \right|\mathrm d\mu+ \left( {\frac{1}{q^ + } - \frac{1}{p + 1}} \right)\int_\Omega  {\left| u \right|}^{p + 1}\mathrm dx\\
&\;\;\;\;\;\;\; \ge \frac{1}{q^ + }\left( {1 - \delta } \right)h\left( \delta  \right) + \frac{p + 1 - {q^ + }}{{q^ + }\left( {p + 1} \right)}\delta h\left( \delta  \right),
\end{split}
\end{equation*}
which  implies $d(1)\geq M$.

(ii) Let
\begin{equation*}
 \lambda\left(\delta\right)=\left(\frac{\delta\int_Qg\left(\left|D_s^Au\left(x,y\right)\right|\right)\left|D_s^Au\left(x,y\right)\right|\mathrm d\mu}{\int_\Omega\left|u\right|^{p+1}\mathrm dx}\right)^\frac1{p-\gamma},
\end{equation*}
then
\begin{equation}\label{d properties1}
\begin{split}
&\left|I_\delta\left(\lambda\left(\delta\right)u\right)\right|=\left|\delta\int_Qg\left(\left|\lambda\left(\delta\right)D_s^Au\left(x,y\right)\right|\right)\left|\lambda\left(\delta\right)D_s^Au\left(x,y\right)\right|\mathrm d\mu-\lambda\left(\delta\right)^{p+1}\int_\Omega\left|u\right|^{p+1}\mathrm dx\right|\\
&\;\;\;\;\;\;\;\;\;\;\;\;\;\;\;\;\;\;\;=\left|\delta{\lambda\left(\delta\right)}^{\gamma+1}\int_Qg\left(\left|D_s^Au\left(x,y\right)\right|\right)\left|D_s^Au\left(x,y\right)\right|\mathrm d\mu-\lambda\left(\delta\right)^{p+1}\int_\Omega\left|u\right|^{p+1}\mathrm dx\right|\\
&\;\;\;\;\;\;\;\;\;\;\;\;\;\;\;\;\;\;\;=\left|{\lambda\left(\delta\right)}^{\gamma+1}\left(\delta\int_Qg\left(\left|D_s^Au\left(x,y\right)\right|\right)\left|D_s^Au\left(x,y\right)\right|\mathrm d\mu-\lambda\left(\delta\right)^{p-\gamma}\int_\Omega\left|u\right|^{p+1}\mathrm dx\right)\right|\\
&\;\;\;\;\;\;\;\;\;\;\;\;\;\;\;\;\;\;\;=0,
\end{split}
\end{equation}
where $\gamma\in\left[q^--1,q^+-1\right]$. For $\lambda\left(\delta\right)u\in \mathcal N_\delta$, combining equation (\ref{2.1.4}), we obtain
\begin{equation*}
\begin{split}
&d\left(\delta\right)\leq J\left(\lambda\left(\delta\right)u\right)\\
&\;\;\;\;\;\;\;=\int_QG\left(\left|\lambda\left(\delta\right)D_s^Au\left(x,y\right)\right|\right)d\mu-\frac{\lambda\left(\delta\right)^{p+1}}{p+1}\int_\Omega\left|u\right|^{p+1}\mathrm dx\\
&\;\;\;\;\;\;\;\leq\left(\frac1{q^-}-\frac\delta{p+1}\right)\int_Qg\left(\left|\lambda\left(\delta\right)D_s^Au\left(x,y\right)\right|\right)\left|\lambda\left(\delta\right)D_s^Au\left(x,y\right)\right|\mathrm d\mu,\\
\end{split}
\end{equation*}
then $d\left(\delta\right)\leq0$ for $\delta=\frac{p+1}{q^-}$. Since $d\left(\delta\right)$ is continuous with respect to $\delta$, and $d\left(1\right)=d>0$, there exists an $b\in(1,\frac{p+1}{q^-}\rbrack$ satisfying $d\left(b\right)=0$.

Next, we will prove that $d\left(\delta'\right)<d\left(\delta''\right)$ for $0<\delta'<\delta''<1$ and $1<\delta''<\delta'<b$, respectively.

According to the definition of $\lambda\left(\delta\right)$ and $N$-function, for $0<\delta'<\delta''<1$, we have
\small\begin{equation}\label{d properties2}
\begin{split}
&J\left(\lambda\left(\delta''\right)u\right)-J\left(\lambda\left(\delta'\right)u\right)\\
&=\int_Q\int_0^{\lambda\left(\delta''\right)\left|D_s^Au\left(x,y\right)\right|}g\left(\tau\right)\mathrm d\tau \mathrm d\mu-\int_Q\int_0^{\lambda\left(\delta'\right)\left|D_s^Au\left(x,y\right)\right|}g\left(\tau\right)\mathrm d\tau\mathrm d\mu\\
&\;\;\;\;-\left(\frac{\lambda\left(\delta''\right)^{p+1}}{p+1}\int_\Omega\left|u\right|^{p+1}\mathrm dx-\frac{\lambda\left(\delta'\right)^{p+1}}{p+1}\int_\Omega\left|u\right|^{p+1}\mathrm dx\right)\\
&=\int_Q\int_{\lambda\left(\delta'\right)\left|D_s^Au\left(x,y\right)\right|}^{\lambda\left(\delta''\right)\left|D_s^Au\left(x,y\right)\right|}g\left(\tau\right)\mathrm d\tau \mathrm d\mu-\int_{\lambda\left(\delta'\right)}^{\lambda\left(\delta''\right)}\lambda\left(\delta\right)^p\mathrm d \lambda\int_\Omega\left|u\right|^{p+1}\mathrm dx\\
&=\int_Q\int_{\lambda\left(\delta'\right)}^{\lambda\left(\delta''\right)}g\left(\lambda\left(\delta\right)\left|D_s^Au\left(x,y\right)\right|\right)\left|D_s^Au\left(x,y\right)\right|\mathrm d\lambda \mathrm d\mu-\int_{\lambda\left(\delta'\right)}^{\lambda\left(\delta''\right)}\lambda\left(\delta\right)^p\int_\Omega\left|u\right|^{p+1}\mathrm dx\mathrm d\lambda\\
&=\int_{\lambda\left(\delta'\right)}^{\lambda\left(\delta''\right)}\left(1-\delta\right)\left(\int_Qg\left(\lambda\left(\delta\right)\left|D_s^Au\left(x,y\right)\right|\right)\left|D_s^Au\left(x,y\right)\right|\mathrm d\mu\right)\mathrm d\lambda,
\end{split}
\end{equation}
since $\lambda\left(\delta\right)$ is increasing with respect to $\delta$, we have $J\left(\lambda\left(\delta''\right)u\right)-J\left(\lambda\left(\delta'\right)u\right)>0$.

Similar to the above proof process, we can obtain $J\left(\lambda\left(\delta''\right)u\right)-J\left(\lambda\left(\delta'\right)u\right)>0$ for $1<\delta''<\delta'<b$.
\end{proof}

By Lemma \ref{d properties}, define $d_0=\underset{\delta\rightarrow0^+}{\mathrm{lim}}d(\delta)\geq0$.

\begin{lemma}\label{I invariable}
Let $d_0<J(u)<d$,  $u\in  W_{A,0}^{s,G}\left(\Omega,\mathbb{C}\right)$ and $\delta_1<1<\delta_2$, $\delta_1$ and $\delta_2$ are the two roots of equation $d(\delta)=J(u)$, then the sign of $I_\delta(u)$ is invariable in $\delta_1<\delta<\delta_2$.
\end{lemma}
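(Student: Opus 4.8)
The plan is to exploit that, for fixed $u$, the map $\delta\mapsto I_\delta(u)=\delta\,a-c$, with $a:=\int_Q g(|D_s^Au(x,y)|)\,|D_s^Au(x,y)|\,\mathrm d\mu$ and $c:=\int_\Omega|u|^{p+1}\,\mathrm dx$, is \emph{affine} in $\delta$, hence monotone, so it can change sign at most once; and then to rule out that sign change on $(\delta_1,\delta_2)$ by means of the comparison $J(u)\ge d(\delta)$ which holds whenever $u$ lies on the Nehari manifold $\mathcal N_\delta$.

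First I would record the structural facts. Since $d_0\ge 0$ and $d_0<J(u)$, we have $J(u)>0$, hence $u\ne 0$; therefore $c>0$ (otherwise $u\equiv 0$), and, using (\ref{2.1.4}), Lemma~\ref{inequality1}(iv) and the Poincar\'e inequality for $W_{A,0}^{s,G}(\Omega,\mathbb{C})$, also $a\ge q^-\rho_{s,G}^A(u)>0$. Consequently $I_\delta(u)=\delta a-c$ is strictly increasing in $\delta$ with the unique zero $\delta_\ast:=c/a>0$.

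Next I would locate the two roots of $d(\delta)=J(u)$ using Lemma~\ref{d properties}: $d(\cdot)$ is continuous, increasing on $(0,1]$ from $d_0$ to $d(1)=d$, and decreasing on $[1,b]$ from $d$ to $d(b)=0$. Since $d_0<J(u)<d$, this equation has exactly one root $\delta_1\in(0,1)$ and exactly one root $\delta_2\in(1,b)$, and the monotonicity of $d$ yields $d(\delta)>J(u)$ for every $\delta\in(\delta_1,\delta_2)$. Then I would argue by contradiction: if $I_\delta(u)$ changed sign on $(\delta_1,\delta_2)$, affineness (and the intermediate value theorem) would force $\delta_\ast\in(\delta_1,\delta_2)$ with $I_{\delta_\ast}(u)=0$; since $u\ne 0$, this gives $u\in\mathcal N_{\delta_\ast}$, hence $J(u)\ge d(\delta_\ast)>J(u)$, a contradiction. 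Thus $\delta_\ast\notin(\delta_1,\delta_2)$, and the strict monotonicity of $I_\delta(u)$ forces its sign to be constant on $(\delta_1,\delta_2)$: it is positive there if $\delta_\ast\le\delta_1$ and negative there if $\delta_\ast\ge\delta_2$.

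I do not expect any genuine analytic difficulty here; the only points needing care are the boundary bookkeeping in the first step (checking $u\ne 0$, so that the object attached to $\delta_\ast$ is the manifold $\mathcal N_{\delta_\ast}$ and not the single point $0$), and the precise use of the monotonicity of $d(\delta)$ from Lemma~\ref{d properties} to obtain the \emph{strict} inequality $d(\delta)>J(u)$ throughout the open interval $(\delta_1,\delta_2)$.
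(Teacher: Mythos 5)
Your proof is correct and is essentially the standard potential-well argument that the paper implicitly relies on (it states Lemma \ref{I invariable} without proof, deferring to the cited literature): the affine dependence $I_\delta(u)=\delta a-c$ with $a,c>0$, the location of the unique interior zero $\delta_\ast=c/a$, and the contradiction $J(u)\geq d(\delta_\ast)>J(u)$ obtained from $u\in\mathcal N_{\delta_\ast}$ together with the strict monotonicity of $d(\delta)$ from Lemma \ref{d properties}. Your bookkeeping ($u\neq 0$ from $J(u)>d_0\geq 0$, hence $c>0$ and $a>0$) is exactly the care the omitted proof requires.
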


Similar to \cite[Proposition 2.1]{Cheng} and \cite[Proposition 3.7]{Cao}, we have the following proposition.

\begin{proposition}\label{proposition}
Suppose that $u$ is the weak solution of (\ref{1.1}), $u_0\in W_{A,0}^{s,G}\left(\Omega,\mathbb{C}\right)$ and $J\left(u_0\right)=\sigma$. Then we have

 \emph{(i)} if $0<\sigma\leq d_0$, there exists a unique $\overline\delta\in\left(1,b\right)$ that satisfies $d\left(\overline\delta\right)=\sigma$, where $b$ is the constant in Lemma \ref{d properties} (ii). Furthermore, if $I\left(u_0\right)>0$, then for any $1\leq\delta<\overline\delta$, there is $u\in W_\delta$. Otherwise, if $I\left(u_0\right)<0$, then for any $1\leq\delta<\overline\delta$, there is $u\in V_\delta$;

 \emph{(ii)} if $d_0<\sigma<d$, then $\delta_1$ and $\delta_2$ satisfy $\delta_1<1<\delta_2$ and $d\left(\delta_1\right)=d\left(\delta_2\right)=\sigma$. Furthermore, if $I\left(u_0\right)>0$, then for any  $\delta_1<\delta<\delta_2$, there is $u\in W_\delta$. Otherwise, if $I\left(u_0\right)<0$, then for any $\delta_1<\delta<\delta_2$, there is $u\in V_\delta$.
\end{proposition}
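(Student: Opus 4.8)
The plan is to establish the invariance of the sign of $I_\delta(u)$ along the weak-solution trajectory via the conserved energy identity \eqref{Weak solution equation2} together with the structural facts already proven about the map $\delta\mapsto d(\delta)$ in Lemma \ref{d properties}. First I would treat case (i), $0<\sigma\le d_0$: since $d(\delta)$ is continuous, equals $d>\sigma$ at $\delta=1$, is decreasing on $[1,b]$, and satisfies $d(b)=0\le\sigma$ (when $\sigma<d_0$ one needs to invoke $d(\delta)\to d_0$ as $\delta\to0^+$ and the monotonicity on $(0,1]$ only on the right branch — actually for $\sigma\le d_0$ the relevant root lies on $[1,b]$ by the decreasing behavior there), the intermediate value theorem yields a root $\overline\delta\in(1,b)$, and strict monotonicity of $d$ on $[1,b]$ gives uniqueness. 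For case (ii), $d_0<\sigma<d$: by Lemma \ref{d properties}(ii), $d$ increases on $(0,1]$ from $d_0$ to $d$ and decreases on $[1,b]$ from $d$ to $0$, so the level $\sigma$ is attained exactly once on each branch, giving $\delta_1<1<\delta_2$ with $d(\delta_1)=d(\delta_2)=\sigma$.

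Next I would fix $\delta$ in the appropriate interval and argue the sign-preservation of $I_\delta$. Suppose $I(u_0)>0$ (the case $I(u_0)<0$ is symmetric). By Lemma \ref{Properties of J} applied at $t=0$, or directly from $I(u_0)>0$ together with $J(u_0)=\sigma<d(\delta)$ (using $\sigma=d(\overline\delta)$ and $d$ decreasing, resp. $\sigma=d(\delta_1)=d(\delta_2)$ and the branch structure, to get $\sigma<d(\delta)$ for $\delta$ strictly between the roots), I would first check $u_0\in W_\delta$, i.e. $I_\delta(u_0)>0$. Here one uses that $I_\delta(u_0)=I(u_0)+(\delta-1)\int_Q g(|D_s^Au_0|)|D_s^Au_0|\,\mathrm d\mu$ when $\delta>1$, which is positive; and for $\delta<1$ one uses that $I_\delta(u_0)>0$ follows from $J(u_0)<d(\delta)$ via the characterization of $d(\delta)$ as the infimum over $\mathcal N_\delta$ — if $I_\delta(u_0)\le 0$ with $u_0\ne0$ then scaling down would produce an element of $\mathcal N_\delta$ with energy below $d(\delta)$, a contradiction (this is where Lemma \ref{Properties of J}-type scaling for $I_\delta$ is invoked). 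Then I would argue by contradiction for $t>0$: if $I_\delta(u(t_1))=0$ for some first time $t_1>0$ with $u(t_1)\ne0$, then $u(t_1)\in\mathcal N_\delta$, so $J(u(t_1))\ge d(\delta)$; but \eqref{Weak solution equation2} gives $J(u(t))\le J(u_0)=\sigma<d(\delta)$ for all $t$, a contradiction. The case $u(t_1)=0$ is handled by noting $J(u(t_1))=0<d(\delta)$ forces (via the energy identity and continuity) that the trajectory stays in the region where $\int_Q g(|D_s^Au|)|D_s^Au|\,\mathrm d\mu < h(\delta)$, so by Lemma \ref{Relations between I and U}(i) $I_\delta$ stays positive, precluding a first zero.

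The same contradiction argument with $W_\delta$ replaced by $V_\delta$ and the sign reversed handles $I(u_0)<0$, except that there the case $u(t_1)=0$ cannot occur at a first zero of $I_\delta$ along a $V_\delta$-trajectory because Lemma \ref{Relations between I and U}(ii) forces $\int_Q g(|D_s^Au|)|D_s^Au|\,\mathrm d\mu>h(\delta)$ to persist, so continuity keeps $u$ away from $0$. I would also need $J(u(t))<d(\delta)$ strictly throughout, which follows since $\int_0^t\|u_\tau\|_{s,2,0}^2\,\mathrm d\tau\ge0$ and, if it vanished identically on an interval, $u$ would be a stationary solution, handled separately or absorbed into the $\overline\delta,\delta_1,\delta_2$ being the exact roots.

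The main obstacle I anticipate is the borderline bookkeeping in case (i) when $\sigma=d_0$ (or $\sigma\le d_0$ with $d_0>0$): one must be sure the root $\overline\delta$ genuinely lies in the open interval $(1,b)$ and that $\sigma<d(\delta)$ holds for \emph{all} $\delta\in[1,\overline\delta)$, which relies on strict monotonicity of $d$ on $[1,b]$ — the proof of Lemma \ref{d properties}(ii) gives $J(\lambda(\delta'')u)-J(\lambda(\delta')u)>0$ but I would double-check this yields strictness of $d(\delta)$ itself (infimum of strictly ordered families) rather than merely $\le$. A secondary technical point is justifying that the energy identity \eqref{Weak solution equation2} propagates the \emph{strict} inequality $J(u(t))<d(\delta)$ rather than just $\le$, which is what is actually needed to exclude $u(t_1)\in\mathcal N_\delta$; this is where one must rule out the trivial trajectory $u\equiv u_0$ with $u_0$ stationary, but under the standing hypothesis $\int_\Omega|u_0|^{p+1}\,\mathrm dx\ne0$ (implicit once $I(u_0)\ne0$) combined with $J(u_0)=\sigma<d\le d(\delta)$ this is automatic.
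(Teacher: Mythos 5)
The paper does not actually prove this proposition: it is introduced with ``Similar to \cite[Proposition 2.1]{Cheng} and \cite[Proposition 3.7]{Cao}'' and the proof is deferred to those references. Your reconstruction is essentially the standard potential-well invariance argument those references use: locate the roots of $d(\delta)=\sigma$ from the monotonicity structure of Lemma \ref{d properties}, verify $u_0\in W_\delta$ (resp.\ $V_\delta$), and run a first-exit-time contradiction pitting $J(u(t))\le J(u_0)=\sigma<d(\delta)$ against $u(t_1)\in\mathcal N_\delta\Rightarrow J(u(t_1))\ge d(\delta)$, with the exit through $0$ excluded because $\mathcal N_\delta$ is bounded away from $0$ by Lemma \ref{Relations between I and U}. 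The two caveats you flag are real but resolvable exactly as you indicate: strict monotonicity of $d$ on $[1,b]$ follows because the integrand in (\ref{d properties2}) is bounded below uniformly over $\mathcal N_{\delta''}$ via Lemma \ref{Relations between I and U}(iii), and the needed strict inequality $J(u(t))<d(\delta)$ comes from $\sigma<d(\delta)$ for $\delta$ strictly between the roots, not from the dissipation term.

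One step is stated too quickly. For the $V_\delta$ case you say the argument is ``symmetric with the sign reversed,'' but the identity $I_\delta(u_0)=I(u_0)+(\delta-1)\int_Q g\left(\left|D_s^Au_0\right|\right)\left|D_s^Au_0\right|\mathrm d\mu$, which you use to get $I_\delta(u_0)>0$ from $I(u_0)>0$ when $\delta>1$, does \emph{not} give $I_\delta(u_0)<0$ from $I(u_0)<0$ when $\delta>1$: the correction term has the wrong sign. The correct route (which is precisely the content of Lemma \ref{I invariable}, and is already implicit in your scaling/infimum argument) is that $\delta\mapsto I_\delta(u_0)$ is affine, so if it changed sign on the relevant interval there would be a $\delta'$ strictly between the roots with $I_{\delta'}(u_0)=0$ and $u_0\neq0$, hence $u_0\in\mathcal N_{\delta'}$ and $J(u_0)\ge d(\delta')>\sigma=J(u_0)$, a contradiction. (Note also that your scaling-down argument for $\delta<1$ needs the extra observation that $I(u_0)>0$ places the maximum point $\lambda^\ast$ of Lemma \ref{Properties of J} at $\lambda^\ast\ge1$, so that $J(\lambda u_0)\le J(u_0)$ for $\lambda\in[0,1]$ and the produced element of $\mathcal N_\delta$ really has energy below $d(\delta)$.) With these substitutions both signs are handled uniformly and the proof is complete.
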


\begin{lemma}[Estimate of nonlinear term $\left|u\right|^{p-1}u$]\label{Estimate of nonlinear term}
Let $p$ satisfy the conditions given by (\ref{1.1}), $\left|u_1\right|+\left|u_2\right|>0$ and $u_1\neq u_2$ for any $u_1\left(x,t\right)$, $u_2\left(x,t\right)$ with $\left(x,t\right)\in\Omega\times\left[0,T\right]$, then
\begin{equation*}
\left|\left|u_1\right|^{p-1}u_1-\left|u_2\right|^{p-1}u_2\right|\leq p\left(\left|u_1\right|+\left|u_2\right|\right)^{p-1}\left|u_1-u_2\right|.
\end{equation*}
\end{lemma}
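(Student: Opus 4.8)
The idea is to regard $f(z):=|z|^{p-1}z$ as a $C^{1}$ map from $\mathbb{C}\cong\mathbb{R}^{2}$ into itself — indeed $f=\nabla\Phi$ with $\Phi(z)=\tfrac1{p+1}|z|^{p+1}$ — and to estimate the increment $f(u_{1})-f(u_{2})$ by integrating the differential of $f$ along the segment joining $u_{2}$ to $u_{1}$. Observe first that $(H_{1})$, through the condition $\max\{tg(t)/G(t)-1,1\}<p$, forces $p>1$, so $p-1>0$, and (as checked in Step~1) $f\in C^{1}(\mathbb{C};\mathbb{C})$ with $Df(0)=0$.

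\textbf{Step 1: the norm of the differential.} Writing $z=a+ib$, $r=|z|$, a direct computation of the partial derivatives of $(a,b)\mapsto r^{p-1}(a,b)$ gives $Df(z)=r^{p-1}I+(p-1)r^{p-3}\,vv^{\mathrm T}$ with $v=(a,b)^{\mathrm T}$. The rank-one matrix $vv^{\mathrm T}$ has eigenvalues $r^{2}$ and $0$, so the symmetric matrix $Df(z)$ has eigenvalues $p\,r^{p-1}$ and $r^{p-1}$; hence its operator norm equals $\max\{p,1\}\,r^{p-1}=p\,r^{p-1}$ for $r>0$. Consequently $|Df(z)h|\le p|z|^{p-1}|h|$ for every $h\in\mathbb{C}$, and since the right-hand side tends to $0$ as $z\to0$, this also establishes differentiability of $f$ at the origin with $Df(0)=0$, so the bound $|Df(z)h|\le p|z|^{p-1}|h|$ holds on all of $\mathbb{C}$ (this point matters only when $1<p<2$, where $r^{p-3}$ is singular at $0$ but $r^{p-3}vv^{\mathrm T}$ is not).

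\textbf{Step 2: integration along the segment.} Set $\gamma(t):=(1-t)u_{2}+tu_{1}$ for $t\in[0,1]$. By the fundamental theorem of calculus, $f(u_{1})-f(u_{2})=\int_{0}^{1}Df(\gamma(t))(u_{1}-u_{2})\,\mathrm dt$, so Step~1 yields $|f(u_{1})-f(u_{2})|\le p|u_{1}-u_{2}|\int_{0}^{1}|\gamma(t)|^{p-1}\,\mathrm dt$. Finally $|\gamma(t)|\le(1-t)|u_{2}|+t|u_{1}|\le|u_{1}|+|u_{2}|$, and since $p-1\ge0$ the map $r\mapsto r^{p-1}$ is nondecreasing, so $|\gamma(t)|^{p-1}\le(|u_{1}|+|u_{2}|)^{p-1}$; integrating in $t$ gives $|\,|u_{1}|^{p-1}u_{1}-|u_{2}|^{p-1}u_{2}\,|\le p(|u_{1}|+|u_{2}|)^{p-1}|u_{1}-u_{2}|$, the desired estimate (the hypotheses $u_{1}\neq u_{2}$ and $|u_{1}|+|u_{2}|>0$ are in fact not needed, the inequality being trivial otherwise).

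I do not expect a real obstacle: the only delicate point is legitimizing the fundamental theorem of calculus when the segment $\gamma$ happens to pass through the origin, which is exactly why Step~1 records the $C^{1}$-regularity of $f$ at $0$. Everything else is a routine one-variable integration.
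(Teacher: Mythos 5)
Your proof is correct and complete: the computation of $Df(z)=r^{p-1}I+(p-1)r^{p-3}vv^{\mathrm T}$, the operator-norm bound $\|Df(z)\|=p|z|^{p-1}$ (using $p>1$, which follows from $(H_1)$), the $C^1$-regularity at the origin, and the integration along the segment are all sound, and you rightly observe that the hypotheses $u_1\neq u_2$ and $|u_1|+|u_2|>0$ are superfluous. The paper states this lemma without proof, and your argument is exactly the standard mean-value estimate one would supply, so there is nothing to reconcile.
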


\subsection{Main results}
We provide the following five main theorems, including the existence and uniqueness of local solutions; the existence and uniqueness, asymptotic behavior of global solutions; the finite time blowup of solutions; the convergence relationship between global solutions and ground state solutions.

\begin{theorem}\label{theorem0}[Local existence and uniqueness]
Let $u_0\in W_{A,0}^{s,G}\left(\Omega,\mathbb{C}\right)\cap H_0^s\left(\Omega,\mathbb{C}\right)$ and $p\leq\frac{N+2s}{N-2s}$, then there exist $T>0$ and a unique solution
of (\ref{1.1}) over $\left [0,T\right]$.
\end{theorem}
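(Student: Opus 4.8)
The plan is to combine a Galerkin approximation for the linearized problem with a Banach fixed-point argument, in the spirit of the classical treatment of pseudo-parabolic equations. First I would freeze the nonlinearity: given $w$ in a suitable closed ball of the space $X_T := C\bigl([0,T];W_{A,0}^{s,G}(\Omega,\mathbb{C})\cap H_0^s(\Omega,\mathbb{C})\bigr)$ with $w(0)=u_0$, consider the linear problem $u_t+(-\Delta)^s u_t+(-\Delta_g^A)^s u=|w|^{p-1}w$ with the same initial and exterior data. Using an orthonormal basis $\{e_j\}$ of $H_0^s(\Omega,\mathbb{C})$ (for instance the eigenfunctions of $(-\Delta)^s$ with Dirichlet exterior condition, which also lie in $W_{A,0}^{s,G}$ by Lemma \ref{dense}), one builds finite-dimensional approximations $u_m=\sum_{j=1}^m c_j^m(t)e_j$; because the operator $u\mapsto u+(-\Delta)^s u$ acting on $u_t$ is coercive and bounded on $H_0^s$, the resulting system for the $c_j^m$ is a linear ODE system with a nonsingular, time-independent ``mass'' matrix, so it has a global-in-time solution. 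Testing with $u_m$ and with $\partial_t u_m$ and using the embedding $W_{A,0}^{s,G}(\Omega)\hookrightarrow L^{p+1}(\Omega)$ from Remark \ref{remark $A$and$B$} together with $p\le\frac{N+2s}{N-2s}$ (so that $|w|^{p-1}w\in L^2$ via Theorem \ref{embedding0}) yields uniform bounds on $u_m$ in $L^\infty(0,T;W_{A,0}^{s,G}\cap H_0^s)$ and on $\partial_t u_m$ in $L^2(0,T;H_0^s)$; passing to a weak-* limit and using the monotonicity of the $g$-Laplacian (assumption (\ref{2.1.4})--(\ref{2.1.5}) makes $\xi\mapsto g(|\xi|)\xi/|\xi|$ monotone) via a Minty-type argument identifies the limit as the unique weak solution of the linear problem.

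Next I would set up the fixed-point map $\mathcal{S}:w\mapsto u$ on the complete metric space $\mathcal{B}_{R,T}:=\{w\in X_T: w(0)=u_0,\ \sup_{[0,T]}\|w-u_0\|\le R\}$ with $R$ chosen once and for all (say $R=\|u_0\|$) and $T$ to be shrunk. Invariance, $\mathcal{S}(\mathcal{B}_{R,T})\subset\mathcal{B}_{R,T}$, follows from the linear energy estimates above: the $W_{A,0}^{s,G}$-norm of $u(t)-u_0$ is controlled by $\int_0^t\||w|^{p-1}w\|_{*}$, which is $O(T)$ uniformly on the ball. For the contraction, take $w_1,w_2\in\mathcal{B}_{R,T}$, subtract the two linear problems, test the difference with $\partial_t(u_1-u_2)$, and use the Lipschitz estimate of the nonlinearity from Lemma \ref{Estimate of nonlinear term}, namely $\bigl||w_1|^{p-1}w_1-|w_2|^{p-1}w_2\bigr|\le p(|w_1|+|w_2|)^{p-1}|w_1-w_2|$, followed by Hölder's inequality and the compact embedding $W_{A,0}^{s,G}\hookrightarrow L^{p+1}$; this gives $\sup_{[0,T]}\|u_1-u_2\|\le C(R)\,T\,\sup_{[0,T]}\|w_1-w_2\|$, so that for $T$ small enough $\mathcal{S}$ is a strict contraction. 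The Banach fixed-point theorem then produces a unique fixed point $u=\mathcal{S}(u)$, which is the desired local weak solution; the energy identity (\ref{Weak solution equation2}) is obtained by testing the equation for $u$ with $u_t$ (justified by the regularity $u_t\in L^2(0,T;H_0^s)$ and a density argument) and integrating in time. Uniqueness on $[0,T]$ follows by the same difference-estimate applied to two genuine solutions and Gronwall's inequality.

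The main obstacle I expect is the functional-analytic bookkeeping forced by the Orlicz setting: unlike the $L^p$ case, the modular $\rho_{s,G}^A$ is not homogeneous, so the a priori estimates must be phrased through Lemma \ref{inequality1} (the $\zeta^\pm$ comparison between modular and norm) and the non-homogeneity of $g$ handled via (\ref{2.1.6})--(\ref{2.1.7}); in particular one must be careful that the bounds obtained for $u_m$ are genuine norm bounds, controlled uniformly on the fixed-point ball, and that the dual pairing $\langle\,\cdot\,,\,\cdot\,\rangle$ between $W_{A,0}^{s,G}$ and $W_{A,0}^{-s,G^\ast}$ behaves well under the approximation. A secondary technical point is that the two gradient terms live in different spaces — $(-\Delta)^s u_t$ is naturally paired in $H_0^s$ while $(-\Delta_g^A)^s u$ is paired in $W_{A,0}^{s,G}$ — so the solution space is the intersection $W_{A,0}^{s,G}\cap H_0^s$ and all test functions must be taken there, which is exactly the regularity class recorded in Definition \ref{Weak solution}. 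Once these spaces are matched up consistently, the contraction estimate is routine and the restriction $p\le\frac{N+2s}{N-2s}$ enters only to guarantee $|w|^{p-1}w\in L^2(\Omega)\subset H_0^{-s}(\Omega)$ via Theorem \ref{embedding0}.
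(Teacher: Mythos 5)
Your overall strategy coincides with the paper's: freeze the nonlinearity, solve the resulting problem by a Galerkin scheme combined with a Minty-type monotonicity argument to identify the weak limit of $\frac{g(|D_s^Au_m|)}{|D_s^Au_m|}D_s^Au_m$, and then close with the Contraction Mapping Principle using Lemma \ref{Estimate of nonlinear term} and the restriction $p\leq\frac{N+2s}{N-2s}$. Two technical choices in your write-up differ from the paper and the second one is a real problem. First, the paper runs the fixed-point argument in the weaker space $\mathcal H=C([0,T];H_0^s(\Omega,\mathbb C))$ with norm $\max_t[\,\cdot\,]_{s,2}^2$, not in $C([0,T];W_{A,0}^{s,G}\cap H_0^s)$; the stronger topology you propose would force you to extract smallness of $[u_1-u_2]_{s,G}^A$ from the monotonicity term, which is avoidable and which the paper avoids. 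Second, and more seriously, in the contraction step you test the difference equation with $\partial_t(u_1-u_2)$. The term
\begin{equation*}
\int_0^t\Big\langle\left(-\Delta_g^A\right)^su_1-\left(-\Delta_g^A\right)^su_2,\;\partial_\tau\left(u_1-u_2\right)\Big\rangle\,\mathrm d\tau
\end{equation*}
is neither sign-definite nor the time derivative of any functional of $u_1-u_2$ (since $u\mapsto(-\Delta_g^A)^su_1-(-\Delta_g^A)^su_2$ is not a gradient in the difference variable), so it cannot be discarded or absorbed. The monotonicity of the $g$-Laplacian is only exploitable when the difference equation is tested with $u_1-u_2$ itself, which is exactly what the paper does in (\ref{021(1)}): the $g$-Laplacian contribution is then nonnegative and can be dropped, leaving a Gronwall-type bound on $\|u_1-u_2\|_{s,2,0}^2$ and the contraction constant $C R^{2(p-1)}T/(1-T)$. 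With the test function corrected to $u_1-u_2$ (and the same correction in the uniqueness argument for two genuine solutions), your plan reproduces the paper's proof.
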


\begin{theorem}\label{theorem1}[Global existence and uniqueness]
 Assume that $u_0\in W_{A,0}^{s,G}\left(\Omega,\mathbb{C}\right)\cap H_0^s\left(\Omega,\mathbb{C}\right)$.

 (i) If $J\left(u_0\right)<d$  and $I\left(u_0\right)>0$, then problem (\ref{1.1}) has a global weak solution $u\in L^\infty\left(0,\infty; W_{A,0}^{s,G}\left(\Omega,\mathbb{C}\right)\cap H_0^s\left(\Omega,\mathbb{C}\right)\right)$ with $u_t\in L^2\left(0,\infty; H_0^s\left(\Omega,\mathbb{C}\right)\right)$; and the weak solution is unique for $p\leq\frac{N+sq^-}{N-sq^-}$;

 (ii) If  $J\left(u_0\right)=d$ and $I(u_0)\geq0$, then (\ref{1.1}) admits a global weak solution  $u(t)\in L^\infty\left(0,\infty; W_{A,0}^{s,G}\left(\Omega,\mathbb{C}\right)\cap H_0^s\left(\Omega,\mathbb{C}\right)\right)$ with $u_t\in L^2\left(0,\infty; H_0^s\left(\Omega,\mathbb{C}\right)\right)$; and the weak solution is unique for $p\leq\frac{N+sq^-}{N-sq^-}$;

 (iii) If $J\left(u_0\right)$ is finite and $J\left(u_0\right)>d$,  $I\left(u_0\right)>0$ and $\left\|u_0\right\|_{s,2,0}^2\leq\lambda_{J\left(u_0\right)}$, then there exists a global weak solution $u\in L^\infty\left(0,\infty; W_{A,0}^{s,G}\left(\Omega,\mathbb{C}\right)\cap H_0^s\left(\Omega,\mathbb{C}\right)\right)$ with $u_t\in L^2\left(0,\infty; H_0^s\left(\Omega,\mathbb{C}\right)\right)$; and the weak solution is unique for $p\leq\frac{N+sq^-}{N-sq^-}$.
\end{theorem}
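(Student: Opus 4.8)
The plan is to construct the solution by the Galerkin method, to extract uniform‑in‑time a priori bounds from the potential‑well structure so that the approximate solutions are automatically global, and then to pass to the limit; uniqueness is handled separately under the stated growth restriction on $p$. Fix an orthogonal basis $\{w_j\}$ of $W_{A,0}^{s,G}(\Omega,\mathbb C)\cap H_0^s(\Omega,\mathbb C)$ and seek $u_m(t)=\sum_{j=1}^m c_{jm}(t)w_j$ solving the finite‑dimensional system obtained by testing (\ref{Weak solution equation}) against $w_1,\dots,w_m$ with initial data $u_{0m}\to u_0$; the matrix multiplying $(c_{jm}')$, coming from $u_t+(-\Delta)^su_t$, is positive definite, so the ODE system is locally solvable. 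Testing with $u_{mt}$ yields the discrete version of (\ref{Weak solution equation2}), $\int_0^t\|u_{m\tau}\|_{s,2,0}^2\,\mathrm d\tau+J(u_m(t))=J(u_{0m})$, and testing with $u_m$ itself yields
\begin{equation*}
\tfrac12\tfrac{\mathrm d}{\mathrm dt}\|u_m(t)\|_{s,2,0}^2=-I(u_m(t)).
\end{equation*}
The mechanism producing uniform bounds is that, as long as $I(u_m(t))\ge0$, assumption $(H_1)$ and (\ref{2.1.4}) give
\begin{equation*}
J(u_m)\ \ge\ \Bigl(\tfrac1{q^+}-\tfrac1{p+1}\Bigr)\int_Q g\bigl(|D_s^Au_m|\bigr)|D_s^Au_m|\,\mathrm d\mu\ \ge\ 0,
\end{equation*}
so the energy identity bounds $\int_Q g(|D_s^Au_m|)|D_s^Au_m|\,\mathrm d\mu$ — hence, by Lemma \ref{inequality1}(iv) and the Poincar\'e inequality in Remark \ref{remark $A$and$B$}, the norm $\|u_m\|_{s,G,0}^A$ — uniformly in $t$ and $m$, forces $\|u_m(t)\|_{s,2,0}$ to be non‑increasing, and gives $\int_0^\infty\|u_{m\tau}\|_{s,2,0}^2\,\mathrm d\tau\le J(u_{0m})$. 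Everything therefore reduces to showing $I(u_m(t))\ge0$ for all $t$.

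For (i), $J(u_0)<d$ and $I(u_0)>0$ put $u_0\in W$, and $W$ is invariant: if $t_0>0$ were the first time with $I(u_m(t_0))=0$ and $u_m(t_0)\ne0$, then $u_m(t_0)\in\mathcal N$ and $J(u_m(t_0))\ge d$, contradicting $J(u_m(t_0))\le J(u_{0m})<d$ from the energy identity. Hence $I(u_m(t))>0$ for all $t$, the bounds are global, $u_m$ extends to $[0,\infty)$, and one extracts a subsequence converging weakly‑$*$ in $L^\infty(0,\infty;W_{A,0}^{s,G}\cap H_0^s)$, with $u_{mt}$ weakly convergent in $L^2(0,\infty;H_0^s)$ and $u_m$ strongly convergent in $L^{p+1}$ on each bounded time interval (by the compact embedding of Remark \ref{remark $A$and$B$} and an Aubin--Lions argument), which lets one pass to the limit in the nonlinear term $|u_m|^{p-1}u_m$; the limit $u$ then satisfies (\ref{Weak solution equation}) and (\ref{Weak solution equation2}). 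For (ii), take $u_{0m}=(1-\tfrac1m)u_0$: since $u_0\ne0$ (otherwise $J(u_0)=0<d$) and $I(u_0)\ge0$, Lemma \ref{Properties of J} gives $\lambda^\ast\ge1$, hence $J(u_{0m})<J(u_0)=d$ and $I(u_{0m})>0$, so part (i) produces global solutions with uniform bounds ($J(u_{0m})\le d$); passing to the limit as before, now also using $u_{0m}\to u_0$ in $W_{A,0}^{s,G}\cap H_0^s$, yields the global solution with $u(0)=u_0$.

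For (iii) only the mechanism keeping $I\ge0$ changes. Suppose $t_0>0$ is the first time $I(u_m(t_0))=0$ with $u_m(t_0)\ne0$; then $u_m(t_0)\in\mathcal N$ and, since $J(u_m(t_0))\le J(u_{0m})$, also $u_m(t_0)\in\mathcal N^{J(u_{0m})}$, whence $\|u_m(t_0)\|_{s,2,0}^2\ge\lambda_{J(u_{0m})}$. On the other hand $I(u_m(\cdot))>0$ on $[0,t_0)$ makes $\|u_m(\cdot)\|_{s,2,0}^2$ non‑increasing there, so $\|u_m(t_0)\|_{s,2,0}^2\le\|u_{0m}\|_{s,2,0}^2$. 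If the approximating data are chosen so that, in addition to $u_{0m}\to u_0$, $J(u_{0m})\to J(u_0)$ and $I(u_{0m})>0$, one also has $\|u_{0m}\|_{s,2,0}^2\le\lambda_{J(u_{0m})}$ — possible by taking $u_{0m}$ close to $(1-\tfrac1m)u_0$ and using that $\alpha\mapsto\lambda_\alpha$ is non‑increasing together with $\|u_0\|_{s,2,0}^2\le\lambda_{J(u_0)}$ — then $\lambda_{J(u_{0m})}\le\|u_m(t_0)\|_{s,2,0}^2\le\|u_{0m}\|_{s,2,0}^2\le\lambda_{J(u_{0m})}$, so $\|u_m(\cdot)\|_{s,2,0}^2$ is constant on $[0,t_0]$ and $I(u_m(\cdot))\equiv0$ there, contradicting $I(u_{0m})>0$. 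Hence $I(u_m(t))\ge0$ for all $t$, the uniform bounds hold globally, and the limiting procedure is identical.

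Finally, for uniqueness when $p\le\frac{N+sq^-}{N-sq^-}$: for two weak solutions $u,v$ with the same data and $w=u-v$, test the difference of the equations with $w$; the term from $(-\Delta_g^A)^s$ is nonnegative because $(-\Delta_g^A)^s$ is the gradient of the convex functional $\rho_{s,G}^A$, and Lemma \ref{Estimate of nonlinear term}, H\"older's inequality and the embedding $W_{A,0}^{s,G}\hookrightarrow L^{p+1}$ (which in this range of $p$ is controlled by $\|\cdot\|_{s,2,0}$) give $\frac{\mathrm d}{\mathrm dt}\|w(t)\|_{s,2,0}^2\le C\|w(t)\|_{s,2,0}^2$ with $w(0)=0$, so $w\equiv0$ by Gronwall's inequality. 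The main obstacle is part (iii): one must transport the threshold condition $\|u_0\|_{s,2,0}^2\le\lambda_{J(u_0)}$ to the Galerkin level while simultaneously keeping $I(u_{0m})>0$ and $J(u_{0m})\to J(u_0)$, which forces a delicate choice of $u_{0m}$ exploiting the monotonicity of $\lambda_\alpha$ under rescaling. A secondary difficulty, common to all three parts, is recovering the full energy identity (\ref{Weak solution equation2}) for the limit rather than merely the inequality, which requires upgrading the weak convergence of $u_{mt}$ to strong convergence in $L^2(0,T;H_0^s)$.
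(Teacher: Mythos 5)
Your overall strategy coincides with the paper's: the same Galerkin scheme, the same two test-function identities, the same first-exit-time contradiction showing $I(u_m(t))>0$ in (i), the same rescaling $u_{0m}=(1-\tfrac1m)u_0$ in (ii), the same $\lambda_\alpha$-based contradiction in (iii), and the same uniqueness argument via Lemma \ref{Estimate of nonlinear term}, monotonicity of $(-\Delta_g^A)^s$ and Gronwall. There is, however, one genuine gap in your limit passage: identifying the weak limit of the quasilinear operator term. The a priori bounds only give $\frac{g(|D_s^Au_m|)}{|D_s^Au_m|}D_s^Au_m\rightharpoonup\xi$ weakly-$*$ in $L^\infty(0,T;L_\mu^{\widetilde G})$, and weak convergence of $u_m$ in $W_{A,0}^{s,G}$ does \emph{not} imply $\xi=\frac{g(|D_s^Au|)}{|D_s^Au|}D_s^Au$. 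The paper closes this by a Minty--Browder monotonicity argument (exploiting the convexity of $G$, the integrated energy identity for $u_m$, weak lower semicontinuity, and the perturbation $\phi=v-\lambda\tau$ with $\lambda\to0^\pm$), carried out in the proof of Theorem \ref{theorem0} and reused here. Your sketch only discusses passing to the limit in the source term $|u_m|^{p-1}u_m$, which is the easy part (compactness of $W_{A,0}^{s,G}\hookrightarrow L^{p+1}$ plus Aubin--Lions); the operator term is where the actual work lies for a $g$-Laplacian, and it needs to be supplied.

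A second, minor point: in (ii) you invoke Lemma \ref{Properties of J} to conclude $\lambda^\ast\ge1$, but that lemma assumes $\int_\Omega|u_0|^{p+1}\,\mathrm dx\neq0$; the paper treats the degenerate case $\int_\Omega|u_0|^{p+1}\,\mathrm dx=0$ separately, scaling $I$ and $J$ directly via (\ref{2.1.6})--(\ref{2.1.7}). Otherwise your argument matches the paper's, and your handling of (iii) is, if anything, more explicit than the paper's about how the threshold condition $\|u_0\|_{s,2,0}^2\le\lambda_{J(u_0)}$ is transported to the approximating initial data.
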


\begin{remark}\label{0<J<d}
For Theorem \ref{theorem1}, we only need to study the case of $0<J\left(u_0\right)<d$ and $I\left(u_0\right)>0$. If $0<J\left(u_0\right)<d$ and $I\left(u_0\right)=0$, it is contradict with the definition of $d$; if $J\left(u_0\right)=0$ and $I\left(u_0\right)\geq0$, from (\ref{3.1.5b}), we have $u_0\equiv0$, which implies that it is a trivial solution; if $J\left(u_0\right)<0$ and $I\left(u_0\right)\geq0$, it is contradict with (\ref{3.1.5b}).
\end{remark}

\begin{theorem}\label{theorem2}[Asymptotic behavior]
Let $u(x,t)$ be the global bounded weak solution of (\ref{1.1}).

(i) If $J\left(u_0\right)<M$, $I\left(u_0\right)>0$ and $\int_Q\left|D_s^{}u\left(x,y\right)\right|^2\mathrm d\mu\leq C\int_Qg\left(\left|D_s^Au\left(x,y\right)\right|\right)$ $\left|D_s^Au\left(x,y\right)\right|\mathrm d\mu$, then there exist  constants $\delta_1>0$ and $\delta_2>0$ such that
\begin{equation*}
\begin{split}
&\left\|u\right\|_{s,2,0}^2<\left\|u_0\right\|_{s,2,0}^2e^{-\delta_1 t},\\
&J\left(u\right)+\left\|u\right\|_{s,2,0}^2<\left(J\left(u_0\right)+\left\|u_0\right\|_{s,2,0}^2\right)e^{-\delta_2 t},\\
&{\left\|u\right\|}_{p+1}<C_\ast\left(\frac{q^+\left(p+1\right)}{q^-\left(p+1-q^+\right)}\right)^{q^-}\left(J\left(u_0\right)+\left\|u_0\right\|_{s,2,0}^2\right)^{q^-}e^{-\delta_2 q^-t},
\end{split}
\end{equation*}
where $t\in\lbrack0,\infty)$;

(ii) If $J\left(u_0\right)=M$, $I(u_0)>0$ and $\int_Q\left|D_s^{}u\left(x,y\right)\right|^2\mathrm d\mu\leq C\int_Qg\left(\left|D_s^Au\left(x,y\right)\right|\right)$ $\left|D_s^Au\left(x,y\right)\right|\mathrm d\mu$, then there exist constants $t_1>0$, $\varsigma_1>0$ and $\varsigma_2>0$ such that
\begin{equation*}
\begin{split}
&\left\|u\right\|_{s,2,0}^2<\left\|u\left(t_1\right)\right\|_{s,2,0}^2e^{-\varsigma_1\left(t-t_1\right)},\\
&J\left(u\right)+\left\|u\right\|_{s,2,0}^2<\left(J\left(u\left(t_1\right)\right)+\left\|u\left(t_1\right)\right\|_{s,2,0}^2\right)e^{-\varsigma_2\left(t-t_1\right)},\\
&{\left\|u\right\|}_{p+1}<C_\ast\left(\frac{q^+\left(p+1\right)}{q^-\left(p+1-q^+\right)}\right)^{q^-}\left(J\left(u\left(t_1\right)\right)+\left\|u\left(t_1\right)\right\|_{s,2,0}^2\right)^{q^-}e^{-\varsigma_2 q^-\left(t-t_1\right)},
\end{split}
\end{equation*}
where $t\in(t_1,+\infty)$;

(iii) If $J\left(u_0\right)$ is finite and $J\left(u_0\right)>d$,  $I\left(u_0\right)>0$ and $\left\|u_0\right\|_{s,2,0}^2\leq\lambda_{J\left(u_0\right)}$, then $u\left(t\right)\rightarrow0$ as $t\rightarrow+\infty$.
\end{theorem}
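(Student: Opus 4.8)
The plan is to treat the three cases through a unified potential-well / energy-dissipation argument, since parts (i) and (ii) both yield exponential decay and part (iii) yields convergence to $0$ via the $\omega$-limit analysis foreshadowed in the introduction. First I would establish the basic energy identity: from the weak formulation (\ref{Weak solution equation}) with test function $\varphi=u$ one obtains, together with (\ref{Weak solution equation2}), that $J(u(t))$ is nonincreasing and $\frac{d}{dt}J(u(t))=-\|u_t\|_{s,2,0}^2$. Next, testing with $\varphi=u$ directly gives the dissipation relation $\frac12\frac{d}{dt}\|u\|_{s,2,0}^2=-I(u)$ (using Remark \ref{remark Weak} and the definition of $I$, noting $\Re[\int_Q g(|D_s^Au|)|D_s^Au|^{-1}D_s^Au\,\overline{D_s^Au}\,d\mu]=\int_Q g(|D_s^Au|)|D_s^Au|\,d\mu$). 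The crux for (i) is to show $I(u(t))>0$ is preserved for all $t$: since $J(u_0)<M\le d$ and $I(u_0)>0$, the solution stays in the potential well $W$ (a standard invariance argument using continuity in $t$, Lemma \ref{Depth d}, and the fact that if $I$ first vanishes at some $t_\ast$ with $u(t_\ast)\neq0$ then $u(t_\ast)\in\mathcal N$, forcing $J(u(t_\ast))\ge d>J(u_0)\ge J(u(t_\ast))$, a contradiction).

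For the quantitative decay in (i), I would combine $I(u)>0$ with the embedding constant $C_\ast$ from (\ref{inequality2}) and Lemmas \ref{inequality1} and \ref{Relations between I and U} to produce a lower bound of the form $I(u)\ge c\,\|u\|_{s,2,0}^2$ for some $c>0$; the hypothesis $\int_Q|D_s u|^2\,d\mu\le C\int_Q g(|D_s^Au|)|D_s^Au|\,d\mu$ is precisely what is needed to convert the $(-\Delta)^s$-part of the $\|\cdot\|_{s,2,0}$-norm into the magnetic $g$-quantity that appears in $I$. Feeding this into $\frac{d}{dt}\|u\|_{s,2,0}^2=-2I(u)\le -2c\,\|u\|_{s,2,0}^2$ and applying Gronwall's inequality yields the first estimate with $\delta_1=2c$. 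For the second estimate I would bound $J(u)$ above by a multiple of $\|u\|_{s,2,0}^2$ on $W$ (using $J(u)\le(\frac1{q^-}-\frac1{p+1})\int_Q g(|D_s^Au|)|D_s^Au|\,d\mu$ together with $I(u)>0$ and the norm-energy equivalences of Lemma \ref{inequality1}), then run a Gronwall argument on $J(u)+\|u\|_{s,2,0}^2$; the third estimate follows from the second via the embedding $W_{A,0}^{s,G}\hookrightarrow L^{p+1}$ and Lemma \ref{inequality1}(iv) relating $\|u\|_{p+1}$ to the energy.

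For part (ii), the obstacle is that $J(u_0)=M$ leaves no room to start strictly inside the well. I would argue that $I(u(t))>0$ for $t$ in a maximal interval; since $J$ is strictly decreasing as long as $u_t\not\equiv0$, and $u_t\equiv0$ on a positive-measure set of times would force $u$ to solve the stationary problem with $I(u)=0$ (contradicting $I>0$), there exists $t_1>0$ with $J(u(t_1))<M$. Then $u(t_1)$ plays the role of the initial datum in case (i), and the estimates transfer verbatim with the time origin shifted to $t_1$.

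For part (iii) — supercritical initial energy — the hypotheses $I(u_0)>0$ and $\|u_0\|_{s,2,0}^2\le\lambda_{J(u_0)}$ are exactly those of Theorem \ref{theorem1}(iii), so global existence is in hand. The hard part will be the $\omega$-limit analysis: one shows $\|u(t)\|_{s,2,0}^2$ is nonincreasing (again from $I(u(t))>0$, which must be shown to persist by a contradiction using the definition of $\lambda_{J(u_0)}$ as the infimal $H^s$-norm on $\mathcal N^{J(u_0)}$ — if $I$ vanished first at $t_\ast$ with $u(t_\ast)\neq0$ then $u(t_\ast)\in\mathcal N^{J(u(t_\ast))}\subseteq\mathcal N^{J(u_0)}$, yet $\|u(t_\ast)\|_{s,2,0}^2<\|u_0\|_{s,2,0}^2\le\lambda_{J(u_0)}$, contradicting the definition of $\lambda_{J(u_0)}$). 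Then $\int_0^\infty\|u_t\|_{s,2,0}^2\,dt<\infty$ forces a sequence $t_n\to\infty$ with $u_t(t_n)\to0$; compactness of the embedding $W_{A,0}^{s,G}\hookrightarrow L^{p+1}$ (Remark \ref{remark $A$and$B$}) extracts a subsequential limit $\psi$ in the $\omega$-limit set which is a weak solution of the stationary problem (\ref{1.7}) with $\|\psi\|_{s,2,0}^2\le\lambda_{J(u_0)}$ and $I(\psi)\ge0$; if $\psi\neq0$ then $\psi\in\mathcal N$, so $J(\psi)\ge d$, and one derives a contradiction with the monotone decay of $J$ along the flow (the limiting energy being strictly below $J(u_0)$ unless $\psi=0$, combined with $\|\psi\|_{s,2,0}^2<\lambda_{J(u_0)}$ strictly once any dissipation has occurred). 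Hence $\psi=0$, and since the limit is independent of the subsequence, $u(t)\to0$ as $t\to+\infty$. I expect this last contradiction — pinning down that the only possible $\omega$-limit point is $0$ rather than a nonzero ground state — to be the main technical obstacle, requiring careful bookkeeping of the strict inequalities in the definition of $\lambda_\alpha$ and the monotonicity of $t\mapsto\|u(t)\|_{s,2,0}^2$.
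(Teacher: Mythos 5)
Your proposal follows essentially the same route as the paper: potential-well invariance plus the identity $\frac12\frac{d}{dt}\left\|u\right\|_{s,2,0}^2=-I(u)$, a quantitative coercivity bound for $I(u)$ derived from $J(u_0)<M$ and the embedding constant $C_\ast$ (the paper isolates this as Lemma \ref{Inequality for J<M}, which gives $\int_Q g\left(\left|D_s^Au\right|\right)\left|D_s^Au\right|\mathrm d\mu<1$ and hence a strictly negative factor in the differential inequality), Gronwall, the time-shift reduction of (ii) to (i), and the $\omega$-limit/$\lambda_{J(u_0)}$ argument for (iii). The one step to make explicit when writing this up is that $I(u)>0$ alone does not yield $I(u)\geq c\left\|u\right\|_{s,2,0}^2$; the strict gap comes from the quantitative bound $\int_Q g\left(\left|D_s^Au\right|\right)\left|D_s^Au\right|\mathrm d\mu\leq\frac{q^+(p+1)}{p+1-q^+}J(u_0)$ combined with the definition of $M$, which is precisely why the hypothesis is $J(u_0)<M$ rather than $J(u_0)<d$.
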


\begin{remark}
In Theorem \ref{theorem2}, we present the following examples to ensure that the assumption
$\int_Q\left|D_s^{}u\left(x,y\right)\right|^2\mathrm d\mu\leq C\int_Qg\left(\left|D_s^Au\left(x,y\right)\right|\right)\left|D_s^Au\left(x,y\right)\right|\mathrm d\mu$ holds. It should be noted that $\int_Q\left|D_s^{}u\left(x,y\right)\right|^2\mathrm d\mu\leq C\int_Q\left|D_s^Au\left(x,y\right)\right|^2\mathrm d\mu$ may exist by $\left|\left|u\left(x\right)\right|\right.$ $-\left.\left|u\left(y\right)\right|\right|\leq\left|u\left(x\right)-e^{i\left(x-y\right)A\left(\frac{x+y}2\right)}u\left(y\right)\right|$(\cite[subsection 2.5]{Pablo}).

(i) If $\int_Q\left|D_s^{}u\left(x,y\right)\right|^2\mathrm d\mu\leq C\int_Q\left|D_s^Au\left(x,y\right)\right|^2\mathrm d\mu$, $q^-\geq2$ and $\left|D_s^Au\right|>1$, by (\ref{2.1.4}) and (\ref{2.1.7}), we have
\begin{equation*}
\begin{split}
&\int_Q\left|D_s^{}u\left(x,y\right)\right|^2\mathrm d\mu\leq C\int_Q\left|D_s^Au\left(x,y\right)\right|^2\mathrm d\mu\\
&\;\;\;\;\;\;\;\;\;\;\;\;\;\;\;\;\;\;\;\;\;\;\;\;\;\;\;\;\leq C\int_Q\left|D_s^Au\left(x,y\right)\right|^{q^-}\mathrm d\mu\\
&\;\;\;\;\;\;\;\;\;\;\;\;\;\;\;\;\;\;\;\;\;\;\;\;\;\;\;\;\leq C\int_QG\left(\left|D_s^Au\left(x,y\right)\right|\right)\mathrm d\mu\\
&\;\;\;\;\;\;\;\;\;\;\;\;\;\;\;\;\;\;\;\;\;\;\;\;\;\;\;\;\leq C\int_Qg\left(\left|D_s^Au\left(x,y\right)\right|\right)\left|D_s^Au\left(x,y\right)\right|\mathrm d\mu;
\end{split}
\end{equation*}

(ii) If $\int_Q\left|D_s^{}u\left(x,y\right)\right|^2\mathrm d\mu\leq C\int_Q\left|D_s^Au\left(x,y\right)\right|^2\mathrm d\mu$, $1<q^+<2$ and $\left|D_s^Au\right|\leq1$,  from (\ref{2.1.6}), we obtain
\begin{equation*}
\begin{split}
&\int_Q\left|D_s^{}u\left(x,y\right)\right|^2\mathrm d\mu\leq C\int_Q\left|D_s^Au\left(x,y\right)\right|^2\mathrm d\mu\\
&\;\;\;\;\;\;\;\;\;\;\;\;\;\;\;\;\;\;\;\;\;\;\;\;\;\;\;\;\leq C\int_Q\left|D_s^Au\left(x,y\right)\right|^{q^+}\mathrm d\mu\\
&\;\;\;\;\;\;\;\;\;\;\;\;\;\;\;\;\;\;\;\;\;\;\;\;\;\;\;\;\leq C\int_QG\left(\left|D_s^Au\left(x,y\right)\right|\right)\mathrm d\mu\\
&\;\;\;\;\;\;\;\;\;\;\;\;\;\;\;\;\;\;\;\;\;\;\;\;\;\;\;\;\leq C\int_Qg\left(\left|D_s^Au\left(x,y\right)\right|\right)\left|D_s^Au\left(x,y\right)\right|\mathrm d\mu.
\end{split}
\end{equation*}
\end{remark}

\begin{theorem}\label{theorem3}[Blowup]
Let $u_0\in W_{A,0}^{s,G}\left(\Omega,\mathbb{C}\right)$.

(i) If $J\left(u_0\right)<M$ and $I\left(u_0\right)<0$, then the weak solution $u(t)$ of (\ref{1.1}) blows up in finite time;

(ii) If $J(u_0)=M$ and $I(u_0)<0$, then the weak solution $u(t)$ of (\ref{1.1}) blows up in finite time.
\end{theorem}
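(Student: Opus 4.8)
The plan is to run a concavity (Levine-type) argument, and to reduce case (ii) to case (i). First observe that taking $\varphi=u$ in (\ref{Weak solution equation}) gives, for a.e. $t\in[0,T)$,
$$\frac12\frac{d}{dt}\|u(t)\|_{s,2,0}^2=-I(u(t)),$$
and that (\ref{Weak solution equation2}) makes $t\mapsto J(u(t))=J(u_0)-\int_0^t\|u_\tau\|_{s,2,0}^2\,d\tau$ non-increasing. In case (i), I first show $I(u(t))<0$ on $[0,T)$: if not, let $t_0$ be the first instant with $I(u(t_0))=0$; on $[0,t_0)$ one has $I(u)<0$, so $\|u\|_{s,2,0}^2$ increases and $\|u(t_0)\|_{s,2,0}^2\ge\|u_0\|_{s,2,0}^2>0$ (here $u_0\ne0$ since $I(0)=0$), hence $u(t_0)\in\mathcal N$ and $J(u(t_0))\ge d\ge M$ by Lemma \ref{Depth d}, contradicting $J(u(t_0))\le J(u_0)<M$. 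Thus $I(u(t))<0$ throughout, so Lemma \ref{Relations between I and U}(ii) yields $\int_Q g(|D_s^Au(x,y)|)|D_s^Au(x,y)|\,d\mu>h(1)$ for all $t\in[0,T)$.

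Next, for constants $\beta,\tau_0,T_0>0$ to be fixed later I consider, on $[0,T_0]$,
$$L(t):=\int_0^t\|u(\tau)\|_{s,2,0}^2\,d\tau+(T_0-t)\|u_0\|_{s,2,0}^2+\beta(t+\tau_0)^2,$$
so that $L>0$, $L'(t)=\|u(t)\|_{s,2,0}^2-\|u_0\|_{s,2,0}^2+2\beta(t+\tau_0)$ and $L''(t)=-2I(u(t))+2\beta$. Since $\|u(t)\|_{s,2,0}^2-\|u_0\|_{s,2,0}^2=2\int_0^t\langle u_\tau,u\rangle_{s,2,0}\,d\tau$ (with $\langle\cdot,\cdot\rangle_{s,2,0}$ the inner product associated with $\|\cdot\|_{s,2,0}$), the Cauchy--Schwarz and AM--GM inequalities give $(L')^2\le 4\big(\int_0^t\|u_\tau\|_{s,2,0}^2\,d\tau+\beta\big)L$. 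Taking the concavity exponent $\theta:=\frac{p-1}2>0$ (positive by $(H_1)$) so that $4(1+\theta)=2(p+1)$, and using $\int_0^t\|u_\tau\|_{s,2,0}^2\,d\tau=J(u_0)-J(u)$, one obtains
$$L L''-(1+\theta)(L')^2\ \ge\ L\big[\,2((p+1)J(u)-I(u))-2(p+1)J(u_0)-2p\beta\,\big].$$
Now by (\ref{2.1.4}), $(p+1)J(u)-I(u)=(p+1)\rho_{s,G}^A(u)-\int_Q g(|D_s^Au|)|D_s^Au|\,d\mu\ge\frac{p+1-q^+}{q^+}\int_Q g(|D_s^Au|)|D_s^Au|\,d\mu>\frac{p+1-q^+}{q^+}h(1)$ by the invariance step, and since $J(u_0)<M\le\frac{p+1-q^+}{q^+(p+1)}h(1)$ the constant $c_0:=\frac{2(p+1-q^+)}{q^+}h(1)-2(p+1)J(u_0)$ is positive; fixing $\beta\in(0,c_0/(2p))$ gives $LL''-(1+\theta)(L')^2\ge(c_0-2p\beta)L>0$ on $[0,T_0]$.

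Consequently $(L^{-\theta})''=-\theta L^{-\theta-2}(LL''-(1+\theta)(L')^2)\le0$, so $L^{-\theta}$ is concave on $[0,T_0]$, with $L(0)=T_0\|u_0\|_{s,2,0}^2+\beta\tau_0^2>0$ and $(L^{-\theta})'(0)=-\theta L(0)^{-\theta-1}L'(0)<0$ since $L'(0)=2\beta\tau_0>0$. Hence $L^{-\theta}(t)\le L(0)^{-\theta}-\theta L(0)^{-\theta-1}L'(0)\,t$ vanishes at $t=L(0)/(2\theta\beta\tau_0)$; choosing first $\tau_0>\|u_0\|_{s,2,0}^2/(2\theta\beta)$ and then any $T_0\ge\beta\tau_0^2/(2\theta\beta\tau_0-\|u_0\|_{s,2,0}^2)$ forces this zero to occur within $[0,T_0]$, contradicting $L>0$. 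Therefore the maximal existence time satisfies $T<\beta\tau_0^2/(2\theta\beta\tau_0-\|u_0\|_{s,2,0}^2)<\infty$, which proves (i). For (ii), $\frac{d}{dt}\|u\|_{s,2,0}^2|_{t=0}=-2I(u_0)>0$, and if $u_t\equiv0$ on a right neighbourhood of $0$ then $u\equiv u_0$ there, so $u_0$ solves (\ref{1.7}) and (\ref{2.22}) gives $I(u_0)=\langle J'(u_0),u_0\rangle=0$, contradicting $I(u_0)<0$; hence $u_t\not\equiv0$ near $0$, so $J(u(t_*))<M$ for some small $t_*>0$. As in the invariance step (now $J$ is strictly decreasing on $[0,t_*]$) one checks $I(u(t))<0$ on $[0,T)$, so $u(t_*)$ meets the hypotheses of (i), and applying the argument of (i) to the solution on $[t_*,T)$ with initial datum $u(t_*)$ yields finite-time blowup.

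The main obstacle is the concavity inequality: one must pick $\theta=\frac{p-1}2$ so that the $J(u)$-contributions cancel, and then extract a \emph{strictly positive} lower bound for $(p+1)J(u)-I(u)$ out of the sub-sharp/sharp-critical hypothesis $J(u_0)\le M$, which is exactly where the quantitative estimate $\int_Q g(|D_s^Au|)|D_s^Au|\,d\mu>h(1)$ from Lemma \ref{Relations between I and U} and the normalization $M=(\frac1{q^+}-\frac1{p+1})\min\{h(1),1\}$ enter. The degenerate equality case (ii) is the secondary subtlety, resolved by the observation that $I(u_0)<0$ prevents $u_0$ from being a stationary solution of (\ref{1.7}).
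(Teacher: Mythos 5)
Your proof is correct and follows essentially the same route as the paper: invariance of the set $\{I<0\}$ along the flow, the quantitative lower bound $\int_Q g(|D_s^Au|)|D_s^Au|\,\mathrm d\mu>h(1)$ from Lemma \ref{Relations between I and U}(ii) combined with the normalization of $M$, a Levine concavity argument on $\int_0^t\|u\|_{s,2,0}^2\,\mathrm d\tau+(T_0-t)\|u_0\|_{s,2,0}^2$, and reduction of case (ii) to case (i) by showing the energy drops strictly below $M$. The only differences are cosmetic: you add the perturbation $\beta(t+\tau_0)^2$ so that $L'(0)>0$ and the concavity endgame is self-contained (the paper's $F$ has $F'(0)=0$ and appeals to Payne--Sattinger), you use the exponent $\tfrac{p-1}{2}$ instead of $\tfrac{p-1}{4}$, and you handle $J(u_0)\le 0$ uniformly rather than as a separate case.
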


\begin{theorem}\label{theorem4}[Convergence relations]
Let $u(x,t)$ be a global solution of (\ref{1.1}), if $W_{A,0}^{s,G}\left(\Omega,\mathbb{C}\right)\cap H_0^s\left(\Omega,\mathbb{C}\right)=W_{A,0}^{s,G}\left(\Omega,\mathbb{C}\right)$, then there exist a function $u^\ast\in\Phi$ and an increasing sequence $\left\{t_k\right\}_{k=1}^\infty$ with $t_k\rightarrow+\infty$ as $k\rightarrow+\infty$ such that
 \begin{equation*}
\lim_{k\rightarrow+\infty}\left[u\left(\cdot,t_k\right)-u^\ast\right]_{s,G}^A=0.
\end{equation*}

\end{theorem}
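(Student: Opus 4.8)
The plan is to adapt the dynamical-systems argument of Cao--Zhao to the present nonlocal magnetic Orlicz setting: extract a time sequence along which the dissipation $\|u_t\|_{s,2,0}$ vanishes, pass to a weak limit $u^\ast$, identify $u^\ast$ as a solution of \eqref{1.7} by a Minty-type monotonicity argument, and finally upgrade weak convergence to the modular (hence $[\cdot]_{s,G}^A$-norm) convergence asserted. The hypothesis $W_{A,0}^{s,G}(\Omega,\mathbb{C})\cap H_0^s(\Omega,\mathbb{C})=W_{A,0}^{s,G}(\Omega,\mathbb{C})$ is used throughout so that the orbit and the test functions may be taken in $W_{A,0}^{s,G}(\Omega,\mathbb{C})$ directly.

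First I would use the energy identity \eqref{Weak solution equation2}: it shows $t\mapsto J(u(t))$ is non-increasing and $\int_0^\infty\|u_\tau\|_{s,2,0}^2\,d\tau\le J(u_0)-\lim_{t\to\infty}J(u(t))<\infty$, so there is an increasing sequence $t_k\to\infty$ with $\|u_t(\cdot,t_k)\|_{s,2,0}\to0$; write $w_k:=u(\cdot,t_k)$. Since $u$ is a bounded global solution its orbit is bounded in $W_{A,0}^{s,G}(\Omega,\mathbb{C})$ — if one wishes to derive this, note that whenever $I(u(t))\ge0$ one has $\int_\Omega|u(t)|^{p+1}\,dx\le q^+\rho_{s,G}^A(u(t))$ by \eqref{2.1.4}, hence $\rho_{s,G}^A(u(t))\le\frac{p+1}{p+1-q^+}J(u(t))\le\frac{p+1}{p+1-q^+}J(u_0)$, and Lemma~\ref{inequality1}(iv) turns this into a bound on $[u(t)]_{s,G}^A$. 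Then, by reflexivity, the uniform convexity of $W_{A,0}^{s,G}(\Omega,\mathbb{C})$ granted by $(H_3)$, and the compact embedding $W_{A,0}^{s,G}(\Omega,\mathbb{C})\hookrightarrow L^{p+1}(\Omega,\mathbb{C})$ of Remark~\ref{remark $A$and$B$}, after passing to a subsequence I obtain $w_k\rightharpoonup u^\ast$ in $W_{A,0}^{s,G}(\Omega,\mathbb{C})$ and $w_k\to u^\ast$ in $L^{p+1}(\Omega,\mathbb{C})$ and a.e.\ in $\Omega$.

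Next I would pass to the limit in the weak formulation \eqref{Weak solution equation} at $t=t_k$. Against a fixed $\varphi\in W_{A,0}^{s,G}(\Omega,\mathbb{C})$ the two terms containing $u_t(\cdot,t_k)$ are $O(\|u_t(\cdot,t_k)\|_{s,2,0}\,\|\varphi\|_{s,2,0})\to0$, and the nonlinear term converges to $\Re\int_\Omega|u^\ast|^{p-1}u^\ast\overline\varphi\,dx$ by the strong $L^{p+1}$ convergence together with Lemma~\ref{Estimate of nonlinear term} (which bounds $\||w_k|^{p-1}w_k-|u^\ast|^{p-1}u^\ast\|_{(p+1)/p}$ by $p\,\|(|w_k|+|u^\ast|)^{p-1}\|_{(p+1)/(p-1)}\|w_k-u^\ast\|_{p+1}$). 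The family $(-\Delta_g^A)^sw_k$ is bounded in $W_{A,0}^{-s,G^\ast}(\Omega,\mathbb{C})$ — by Hölder's inequality (Lemma~\ref{inequality3}) and the bound $\widetilde G(g(t))\le(q^+-1)G(t)$ applied to $\rho_{\widetilde G}(g(|D_s^Aw_k|))\le(q^+-1)\rho_{s,G}^A(w_k)$ — so up to a further subsequence $(-\Delta_g^A)^sw_k\rightharpoonup\chi$ there, which gives $\langle\chi,\varphi\rangle=\Re\int_\Omega|u^\ast|^{p-1}u^\ast\overline\varphi\,dx$ for all $\varphi$. Testing with $\varphi=w_k$ and passing to the limit also yields $\langle(-\Delta_g^A)^sw_k,w_k\rangle=\int_Qg(|D_s^Aw_k|)|D_s^Aw_k|\,d\mu\to\int_\Omega|u^\ast|^{p+1}\,dx=\langle\chi,u^\ast\rangle$. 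Since $v\mapsto(-\Delta_g^A)^sv$ is the gradient of the convex modular $\rho_{s,G}^A$, hence monotone, one has $0\le\Re\langle(-\Delta_g^A)^sw_k-(-\Delta_g^A)^sv,\,w_k-v\rangle$ for every $v$; letting $k\to\infty$ gives $\Re\langle\chi-(-\Delta_g^A)^sv,\,u^\ast-v\rangle\ge0$ for all $v\in W_{A,0}^{s,G}(\Omega,\mathbb{C})$, and the standard substitution $v=u^\ast\mp\theta\phi$ with $\theta\downarrow0$ forces $\chi=(-\Delta_g^A)^su^\ast$. Thus $(-\Delta_g^A)^su^\ast=|u^\ast|^{p-1}u^\ast$ weakly, i.e.\ $J'(u^\ast)=0$ and $u^\ast\in\Phi$. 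For the strong convergence, convexity of $\rho_{s,G}^A$ gives $\rho_{s,G}^A(u^\ast)\ge\rho_{s,G}^A(w_k)+\Re\langle(-\Delta_g^A)^sw_k,u^\ast-w_k\rangle$, and $\Re\langle(-\Delta_g^A)^sw_k,u^\ast-w_k\rangle\to\langle\chi,u^\ast\rangle-\int_\Omega|u^\ast|^{p+1}\,dx=0$ by the two limits above, so $\limsup_k\rho_{s,G}^A(w_k)\le\rho_{s,G}^A(u^\ast)$; together with weak lower semicontinuity of $\rho_{s,G}^A$ this yields $\rho_{s,G}^A(w_k)\to\rho_{s,G}^A(u^\ast)$, and then the uniform convexity of $W_{A,0}^{s,G}(\Omega,\mathbb{C})$ (equivalently, the $(S_+)$-property of $(-\Delta_g^A)^s$, since $\Re\langle(-\Delta_g^A)^sw_k-(-\Delta_g^A)^su^\ast,\,w_k-u^\ast\rangle\to0$) upgrades $w_k\rightharpoonup u^\ast$ to $[w_k-u^\ast]_{s,G}^A\to0$, which is the claim for this sequence $\{t_k\}$.

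The hard part will be the nonlinear, nonlocal passage to the limit: weak $W_{A,0}^{s,G}$-convergence of $w_k$ alone does not control $\tfrac{g(|D_s^Aw_k|)}{|D_s^Aw_k|}D_s^Aw_k$, so the whole argument rests on the monotonicity/convexity of $(-\Delta_g^A)^s$ and on the fact that the dissipation term annihilates the $u_t$-contributions \emph{precisely} along $(t_k)$ — it is this that pins down $\lim_k\langle(-\Delta_g^A)^sw_k,w_k\rangle$ and makes the Minty and $(S_+)$ steps close. A secondary technical point to verify is the modular-to-norm convergence lemma in the fractional magnetic Orlicz space, namely that $w_k\rightharpoonup u^\ast$ and $\rho_{s,G}^A(w_k)\to\rho_{s,G}^A(u^\ast)$ force $[w_k-u^\ast]_{s,G}^A\to0$; this relies on $(H_3)$ for uniform convexity and on the $\Delta_2$-type estimates \eqref{2.1.4}--\eqref{2.1.7} through Lemma~\ref{inequality1}(iv).
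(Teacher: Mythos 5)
Your overall architecture matches the paper's: use the energy identity to produce a sequence $t_k\to\infty$ along which $\|u_t(\cdot,t_k)\|_{s,2,0}\to0$, extract a weak limit $u^\ast$, identify it as a critical point of $J$, and upgrade to strong convergence. For the last two steps your Minty/$(S_+)$ route is a legitimate alternative to the paper's, which instead shows $\|J'(u(t_k))\|_{-s,G^\ast,0}^A\to 0$ directly from the weak formulation, deduces $\langle J'(u(t_k))-J'(u^\ast),u(t_k)-u^\ast\rangle\to0$, and then invokes the pointwise strong monotonicity inequality $\langle \frac{g(|a|)}{|a|}a-\frac{g(|b|)}{|b|}b,\,a-b\rangle\ge C\,G(|a-b|)$ (Lemma \ref{Inequality for N-functions}) to get modular, hence norm, convergence of the difference in one stroke; this sidesteps the Radon--Riesz/uniform-convexity lemma you flag as "to be verified."

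There is, however, a genuine gap at the a priori estimate. You need $\{u(\cdot,t_k)\}$ bounded in $W_{A,0}^{s,G}(\Omega,\mathbb{C})$ before any compactness can be used, and your only derivation of this is conditional on $I(u(t))\ge0$, which is not a hypothesis of the theorem and is not known for an arbitrary global solution (the blowup results only exclude $I<0$ when $J(u_0)\le M$; for high-energy global solutions $I(u(t))$ may a priori be negative). Writing "since $u$ is a bounded global solution" assumes what must be proved. The paper closes this by first establishing the Palais--Smale-type smallness $\|J'(u(t_k))\|_{-s,G^\ast,0}^A\to0$, which yields the \emph{linear} bound $|I(u(t_k))|\le C\|u(t_k)\|_{s,G,0}^A$ via $I(\psi)=\langle J'(\psi),\psi\rangle$; combining this with $J(u(t_k))\le J(u_0)$ and
\begin{equation*}
J(u(t_k))-\tfrac{1}{p+1}I(u(t_k))\ \ge\ \tfrac{p+1-q^+}{q^+(p+1)}\int_Q g\left(\left|D_s^Au(t_k)\right|\right)\left|D_s^Au(t_k)\right|\,\mathrm d\mu,
\end{equation*}
whose right-hand side grows superlinearly (at least like $(\,[u(t_k)]_{s,G}^A)^{q^-}$ with $q^->1$) forces boundedness of $\|u(t_k)\|_{s,G,0}^A$ with no sign assumption on $I$. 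A second, smaller gap of the same flavor: your finiteness of $\int_0^\infty\|u_\tau\|_{s,2,0}^2\,\mathrm d\tau$ presupposes $J(u(t))$ is bounded below; the paper proves $J(u(t))\ge0$ first, by noting that $J(u(t_0))<0$ would force $I(u(t_0))<0$ and hence finite-time blowup (Theorem \ref{theorem3}), contradicting globality. Both gaps are repairable, but the superlinear-versus-linear coercivity argument is the key missing idea.
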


\begin{remark}\label{ground state solution}
Taking $M\left(\rho_{s,G}^A\left(u\right)\right)=1,\;V\left(x\right)=0,\;f\left(x,\left|u\right|\right)=\left|u\right|^{p-1}$ for the equation $\left(1.3\right)$ in the reference \cite{Pablo}, similar to the proof process in the reference \cite[Theorem 2]{Pablo}, it is easy to know that the ground state solutions of (\ref{1.7}) exist.

In addition, the weak solution of the modified formula in \cite{Pablo} will be in space $W_{A,0}^{s,G}\left(\Omega,\mathbb{C}\right)$. For consistency, it is necessary to change the space of the global solution in $W_{A,0}^{s,G}\left(\Omega,\mathbb{C}\right)\cap H_0^s\left(\Omega,\mathbb{C}\right)$ to $W_{A,0}^{s,G}\left(\Omega,\mathbb{C}\right)$.
\end{remark}

\section{Local solutions}\label{Local solutions}
In this section,  we use the Galerkin method to prove the local existence and uniqueness of solutions for problem (\ref{1.1}).

\textbf{Proof of Theorem \ref{theorem0}.} The proof of the theorem is divided into two steps. First we need to prove the existence and uniqueness of weak solutions of the following problem (\ref{03}) corresponding to problem (\ref{1.1}) by Galerkin method, then from the Contraction Mapping Principle, we prove the existence and uniqueness of local solutions to problem (\ref{1.1}).

\textbf{Step I:} Consider space $\mathcal{H}=C\left( \left[ {0,T} \right]; H_0^s\left(\Omega,\mathbb{C}\right) \right)$ for every $T>0$, and define the norm on $\mathcal{H}$ as follows
\begin{equation*}
\left\|u\right\|_{\mathcal H}^2=\max_{t\in\left[0,T\right]}\int_Q\left|D_s^{}u\left(x,y\right)\right|^2\mathrm d\mu.
\end{equation*}

Next for every $T>0$ and $u \in \mathcal{H}$, we shall prove that there exists a unique $v \in \mathcal{H}$ satisfying
\begin{equation}\label{03}
\left\{ {\begin{split}
&{v_t}+{\left( { - \Delta} \right)}^s v_t + {{\left( { - \Delta _g^A} \right)}^s}v=\left|u\right|^{p-1}u,\;\;\;\;\;\;\;\;\;\;\;\;\;\;\;0<t<T,x\in\Omega,\\
&v\left(x,t\right)=0,\;\;\;\;\;\;\;\;\;\;\;\;\;\;\;\;\;\;\;\;\;\;\;\;\;\;\;\;\;\;\;\;\;\;\;\;\;\;\;\;\;\;\;\;\;0<t<T,\;x\in\mathbb{R}^N\backslash\Omega,\\
&{v\left( {x,0} \right) = {u_0}\left( x \right),\;\;\;\;\;\;\;\;\;\;\;\;\;\;\;\;\;\;\;\;\;\;\;\;\;\;\;\;\;\;\;\;\;\;\;\;\;\;\;\;\;\;\;\;\;\;\;\;\;\;\;\;\;\;\;\;\;\;\;\;\;x\in\Omega.}
\end{split}
} \right.
\end{equation}

\textbf{Existence}. Let ${\left\{\omega_j\left(x\right)\right\}}_{j\in\mathbb{N}}$ be the Laplace operator subject to the Dirichlet boundary condition:
\begin{equation*}
\left\{\begin{split}
&\left(-\Delta\right)^s\omega_j=\lambda_j\omega_j,\;x\in\Omega,\\
&\omega_j=0,\;x\in\mathbb{R}^N\backslash\Omega,\;j=1,2,\cdots,
\end{split}
\right.
\end{equation*}
 where $\lambda_j (j=1,2,\cdots)$ are the characteristic values. Then ${\left(\omega_k,\omega_j\right)}_{H_0^s\left(\Omega\right)}=\left(1+\lambda_j\right)$ ${\left(\omega_k,\omega_j\right)}_{L^2\left(\Omega\right)}$, which means that ${\left\{\omega_j\left(x\right)\right\}}_{j\in\mathbb{N}}$ are orthogonal in $W_{A,0}^{s,G}\left(\Omega,\mathbb{C}\right)\cap H_0^s\left(\Omega,\mathbb{C}\right)$ and in $L^2\left(\Omega,\mathbb{C}\right)$. Construct the approximate solution $v_m\left(x,t\right)$ of (\ref{1.1}) as follows
\begin{equation*}
v_m\left(x,t\right)=\sum_{j=1}^mg_{jm}\left(t\right)\omega_j(x),\;\;\;m=1,2,\cdots,
\end{equation*}
where $g_{jm}\left(t\right)$ satisfies
\begin{equation}\label{04}
\begin{split}
&\Re\left[\int_\Omega v_{mt}\overline{\omega_s} \mathrm dx+\int_QD_s^{}v_{mt}\left(x,y\right)\overline{D_s^{}\omega_s\left(x,y\right)}\mathrm d\mu\right.\\
&\left.+\int_Q\frac{g\left(\left|D_s^Av_m\left(x,y\right)\right|\right)}{\left|D_s^Av_m\left(x,y\right)\right|}D_s^Av_m\left(x,y\right)\overline{D_s^A\omega_s\left(x,y\right)}\mathrm d\mu\right]=\Re\left[\int_\Omega \left|u\right|^{p-1}u\overline{\omega_s}\mathrm dx\right],
\end{split}
\end{equation}
\begin{equation}\label{05}
v_m\left(x,0\right)=\sum_{j=1}^ma_{jm}(0)\omega_j(x)\rightarrow u_0(x)\;\mathrm{in}\; W_{A,0}^{s,G}\left(\Omega,\mathbb{C}\right)\cap H_0^s\left(\Omega,\mathbb{C}\right),
\end{equation}
for $1\leq s\leq m$, in which $a_{jm}(0)=g_{jm}{(0)}$.

Equations (\ref{04}) and (\ref{05}) give an initial value problem for a system of ordinary differential equations
\begin{equation}\label{06}
\left\{
\begin{split}
&\left(1+\lambda_s\right)\frac{d}{ dt}\Re\left[g_{sm}\left(t\right)\right]=\Re\left[f^\ast\left(g_{1m},g_{2m},\cdots\;,g_{mm}\left(t\right),u\left(t\right)\right)\right],\\
&g_{sm}\left(0\right)=\int_\Omega u_0\overline{\omega_s}\mathrm dx,
\end{split}
\right.
\end{equation}
where $f^\ast\left(g_{1m}\left(t\right),g_{2m}\left(t\right),\cdots\;,g_{mm}\left(t\right),u\left(t\right)\right)=-\int_Q\frac{g\left(\left|D_s^Av_m\left(x,y\right)\right|\right)}{\left|D_s^Av_m\left(x,y\right)\right|}D_s^Av_m\left(x,y\right)$\\$\overline{D_s^A\omega_s\left(x,y\right)}\mathrm d\mu+\int_\Omega\left|u\right|^{p-1}u\overline{\omega_s}\mathrm dx$ for all $j$, the initial value problem (\ref{06}) admits a local solution by  standard existence theory for ordinary differential equations.

Multiplying (\ref{04}) by $g_{jm}\left(t\right)$ and summing for $j$ from $1$ to $m$, we get
\begin{equation}\label{07}
\frac12\frac d{dt}\left\|v_m\right\|_{s,2,0}^2+\int_Qg\left(\left|D_s^Av_m\left(x,y\right)\right|\right)\left|D_s^Av_m\left(x,y\right)\right|\mathrm d\mu=\Re\left[\int_\Omega\left|u\right|^{p-1}u{\overline{v_m}}\mathrm dx\right].
\end{equation}
By H$\ddot{\mathrm o}$lder and Young's inequalities,  we can obtain
\begin{equation}\label{08}
\begin{split}
&\left|\int_\Omega\left|u\right|^{p-1}u{\overline v}_m\mathrm dx\right|\leq\int_\Omega\left|u\right|^p\left|v_m\right|\mathrm dx\\
&\;\;\;\;\;\;\;\;\;\;\;\;\;\;\;\;\;\;\;\;\;\;\;\;\;\;\;\leq\left(\int_\Omega\left|u\right|^\frac{2Np}{N+2s}\mathrm dx\right)^\frac{N+2s}{2N}\left(\int_\Omega\left|v_m\right|^\frac{2N}{N-2s}\mathrm dx\right)^\frac{N-2s}{2N}\\
&\;\;\;\;\;\;\;\;\;\;\;\;\;\;\;\;\;\;\;\;\;\;\;\;\;\;\;\leq\left\|u\right\|_\frac{2Np}{N+2s}^p{\left\|v_m\right\|}_\frac{2N}{N-2s}\\
&\;\;\;\;\;\;\;\;\;\;\;\;\;\;\;\;\;\;\;\;\;\;\;\;\;\;\;\leq C\left[v_m\right]_{s,2}\\
&\;\;\;\;\;\;\;\;\;\;\;\;\;\;\;\;\;\;\;\;\;\;\;\;\;\;\;\leq C+\frac12\left[v_m\right]_{s,2}^2.
\end{split}
\end{equation}
From (\ref{07}) and (\ref{08}), we have
\begin{equation}\label{010}
\frac d{dt}\left\|v_m\right\|_{s,2,0}^2\leq C+\left[v_m\right]_{s,2}^2.
\end{equation}
Then combining Gronwall's inequality and (\ref{010}), we have
\begin{equation}\label{010a}
\left\|v_m\right\|_{s,2,0}^2\leq e^T\left(CT+C\right).
\end{equation}

Multiplying (\ref{04}) by $g'_{jm}\left(t\right)$, summing up with respect to $j$ and integrating with respect to the time variable from $0$ to $t$, we have
\begin{equation}\label{011}
\begin{split}
&\int_0^t\left\|v_{m\tau}\right\|_{s,2,0}^2\mathrm d\tau+\rho_{s,G}^A\left(v_m\right)=\Re\left[\int_0^t\int_\Omega\left|u\right|^{p-1}u{\overline{v_{m\tau}}}\mathrm dx\mathrm d\tau\right]+\rho_{s,G}^A\left(v_m\left(0\right)\right),\;0\leq t\leq T.
\end{split}
\end{equation}
Similar to (\ref{08}),  we can get
\begin{equation}\label{012}
\left|\int_0^t\int_\Omega\left|u\right|^{p-1}u\overline{v_{m\tau}}\mathrm dx\mathrm d\tau\right|\leq CT+\frac12\int_0^t\left[v_{m\tau}\right]_{s,2}^2\mathrm d\tau.
\end{equation}
Combining (\ref{011}) and (\ref{012}) gives
\begin{equation}\label{013}
\int_0^t\left\|v_{m\tau}\right\|_{s,2,0}^2\mathrm d\tau+\rho_{s,G}^A\left(v_m\right)\leq CT.
\end{equation}

Review Lemma \ref{inequality1}, combining (\ref{N-function inequality}) and (\ref{013}), we have
\begin{equation}\label{013a}
\begin{aligned}
&{\left\|\frac{g\left(\left|D_s^Av_m\right|\right)}{\left|D_s^Av_m\right|}D_s^Av_m\right\|}_{\widetilde G}\\
&\leq\max\left\{\left(\int_Q\widetilde G\left(g\left(\left|D_s^Av_m\right|\right)\right) d\mu\right)^\frac1{q^+},\left(\int_Q\widetilde G\left(g\left(\left|D_s^Av_m\right|\right)\right)\mathrm d\mu\right)^\frac1{q^-}\right\}\\
&\leq \max\left\{\left({\left(q^+-1\right)\int_Q}G\left(\left|D_s^Av_m\right|\right)\mathrm d\mu\right)^\frac1{q^+},\left({\left(q^+-1\right)\int_QG}\left(\left|D_s^Av_m\right|\right)\mathrm d\mu\right)^\frac1{q^-}\right\}\\
&\leq C\max\left\{\left(T\right)^\frac1{q^+},\left(T\right)^\frac1{q^-}\right\}.
  \end{aligned}
\end{equation}
Hence, from (\ref{010a}) (\ref{013}) and (\ref{013a}), there exists a $v$ and a subsequence $\left\{v_m\right\}$ of $\left\{v_m\right\}$ such that
\begin{equation*}
\left\{ \begin{split}
&{v_m} \rightharpoonup v{\rm{\;in\;}}{L^\infty }\left( {\left[ {0,T} \right];W_{A,0}^{s,G}\left(\Omega,\mathbb{C}\right) \cap {H_0^s}\left( \Omega,\mathbb{C} \right)} \right){\rm{\;weak\;star,}}\\
&\frac{g\left(\left|D_s^Av_m\left(x,y\right)\right|\right)}{\left|D_s^Av_m\left(x,y\right)\right|}D_s^Av_m\left(x,y\right)\rightharpoonup\xi\;\mathrm{in}\;L^\infty\left(\left[0,T\right];L_\mu^{G^\ast}\left(\Omega,\mathbb{C}\right)\right)\;\mathrm{weak}\;\mathrm{star},\\
&{v_{mt}} \rightharpoonup {v_t}{\rm{\;in\;}}{L^2}\left( {\left[ {0,T} \right];{H_0^s}\left( \Omega,\mathbb{C}\right)} \right){\rm{\;weakly}}{\rm{.}}
\end{split}
 \right.
\end{equation*}

In (\ref{04}) we fixed $s$, letting $m  \to \infty$, we  get
\begin{equation*}
\Re\left[\int_\Omega v_t\overline{\omega_s}\mathrm dx+\int_QD_s^{}v_t\left(x,y\right)\overline{D_s^{}\omega_s\left(x,y\right)}\mathrm d\mu+\int_Q\xi\overline{D_s^A\omega_s\left(x,y\right)}\mathrm d\mu\right]=\Re\left[\int_\Omega \left|u\right|^{p-1}u\overline{\omega_s}\mathrm dx\right],
\end{equation*}
and
\begin{equation}\label{014}
\Re\left[\int_\Omega v_t\overline\varphi\mathrm dx+\int_QD_s^{}v_t\left(x,y\right)\overline{D_s^{}\varphi\left(x,y\right)}\mathrm d\mu+\int_Q\xi\overline{D_s^A\varphi\left(x,y\right)}\mathrm d\mu\right]=\Re\left[\int_\Omega \left|u\right|^{p-1}u\overline \varphi\mathrm dx\right]
\end{equation}
for any $\varphi\in W_{A,0}^{s,G}\left(\Omega,\mathbb{C}\right) \cap {H_0^s}\left( \Omega,\mathbb{C}  \right),\;\mathrm a.\mathrm e.\;t>0.$ In addition, according to Aubin-Lions lemma, we have $v \in C\left( {\left[ {0,T} \right]; {H_0^s}\left( \Omega,\mathbb{C} \right)} \right)$.

Now, we prove
\begin{equation}\label{014a}
  \Re\left[\int_Q\xi\overline{D_s^A\varphi\left(x,y\right)}\mathrm d\mu\right]=\Re\left[\int_Q\frac{g\left(\left|D_s^Av\left(x,y\right)\right|\right)}{\left|D_s^Av\left(x,y\right)\right|}D_s^Av\left(x,y\right)\overline{D_s^A\varphi\left(x,y\right)}\mathrm d\mu\right].
\end{equation}
For any $\phi\in L^\infty\left(\left[0,T\right];W_{A,0}^{s,G}\left(\Omega,\mathbb{C}\right)\cap H_0^s\left(\Omega,\mathbb{C}\right)\right)$, we have
\begin{equation*}
\begin{split}
&\Re\left[\int_Q\left(\frac{g\left(\left|D_s^Av_m\left(x,y\right)\right|\right)}{\left|D_s^Av_m\left(x,y\right)\right|}D_s^Av_m\left(x,y\right)-\frac{g\left(\left|D_s^A\phi\left(x,y\right)\right|\right)}{\left|D_s^A\phi\left(x,y\right)\right|}D_s^A\phi\left(x,y\right)\right)\right.\\
&\left.\left(\overline{D_s^Av_m\left(x,y\right)}-\overline{D_s^A\phi\left(x,y\right)}\right)\mathrm d\mu\right]\geq0.
\end{split}
\end{equation*}
Integrating equation (\ref{07}) with respect to time $t$ from $0$ to $T$, we obtain
\begin{equation*}
 \begin{split}
&  \int_0^T\int_Qg\left(\left|D_s^Av_m\left(x,y\right)\right|\right)\left|D_s^Av_m\left(x,y\right)\right|\mathrm d\mu\mathrm dt\\
&=\Re\left[\int_0^T\int_\Omega\left|u\right|^{p-1}u\overline{v_m}\mathrm dx\mathrm dt\right]-\frac12\left\|v_m\left(T\right)\right\|_{s,2,0}^2+\frac12\left\|v_m\left(0\right)\right\|_{s,2,0}^2.
 \end{split}
\end{equation*}

According to the weak lower semi-continuity of the $L^2\left(\Omega\right)$, it can be concluded that
\begin{equation}\label{014a}
\begin{split}
&\underset{ m\rightarrow\infty}{\lim\sup}\Re\left[\int_0^T\int_Q\left(g\left(\left|D_s^Av_m\left(x,y\right)\right|\right)\left|D_s^Av_m\left(x,y\right)\right|-\frac{g\left(\left|D_s^Av_m\left(x,y\right)\right|\right)}{\left|D_s^Av_m\left(x,y\right)\right|}D_s^Av_m\left(x,y\right)\right.\right.\\
&\left.\left.\overline{D_s^A\phi\left(x,y\right)}-\frac{g\left(\left|D_s^A\phi\left(x,y\right)\right|\right)}{\left|D_s^A\phi\left(x,y\right)\right|}D_s^A\phi\left(x,y\right)\left(\overline{D_s^Av_m\left(x,y\right)}-\overline{D_s^A\phi\left(x,y\right)}\right)\right)\mathrm d\mu\mathrm dt\right]\\
&\leq\Re\left[\int_0^T\int_\Omega\left|u\right|^{p-1}u\overline v\mathrm dx\mathrm dt-\frac12\left\|v\left(T\right)\right\|_{s,2,0}^2+\frac12\left\|v\left(0\right)\right\|_{s,2,0}^2-\int_0^T\int_Q\xi\overline{D_s^A\phi\left(x,y\right)}\mathrm d\mu\mathrm dt\right.\\
&\left.\;\;\;\;\;-\int_0^T\int_Q\frac{g\left(\left|D_s^A\phi\left(x,y\right)\right|\right)}{\left|D_s^A\phi\left(x,y\right)\right|}D_s^A\phi\left(x,y\right)\left(\overline{D_s^Av\left(x,y\right)}-\overline{D_s^A\phi\left(x,y\right)}\right)\mathrm d\mu\mathrm dt\right].
\end{split}
\end{equation}
Let $\varphi=v$ in (\ref{014}) and integrate equation (\ref{014}) with respect to $t$ from $0$ to $T$, then it can be rewritten as
\begin{equation}\label{014b}
  \Re\left[\int_0^T\int_Q\xi\overline{D_s^Av\left(x,y\right)}\mathrm d\mu\mathrm dt\right]=\Re\left[\int_0^T\int_\Omega \left|u\right|^{p-1}u\overline v\mathrm dx\mathrm dt\right]-\frac12\left\|v\left(T\right)\right\|_{s,2,0}^2+\frac12\left\|v\left(0\right)\right\|_{s,2,0}^2.
\end{equation}
Combining (\ref{014a})) and (\ref{014b}), it can be concluded that
\begin{equation}\label{014c}
\begin{split}
&\Re\left[\int_0^T\int_Q\xi\left(\overline{D_s^Av\left(x,y\right)}-\overline{D_s^A\phi\left(x,y\right)}\right)\mathrm d\mu\mathrm dt\right.\\
&\left.-\int_0^T\int_Q\frac{g\left(\left|D_s^A\phi\left(x,y\right)\right|\right)}{\left|D_s^A\phi\left(x,y\right)\right|}D_s^A\phi\left(x,y\right)\left(\overline{D_s^Av\left(x,y\right)}-\overline{D_s^A\phi\left(x,y\right)}\right)\mathrm d\mu\mathrm dt\right]\geq0.
\end{split}
\end{equation}
Let $\phi=v-\lambda\tau$ in (\ref{014c}), $\lambda\geq0$, then
\begin{equation*}
  \Re\left[\int_0^T\int_Q\left(\xi-\frac{g\left(\left|D_s^A\left(v-\lambda\tau\right)\left(x,y\right)\right|\right)}{\left|D_s^A\left(v-\lambda\tau\right)\left(x,y\right)\right|}D_s^A\left(v-\lambda\tau\right)\left(x,y\right)\right)\overline{D_s^A\tau\left(x,y\right)}\mathrm d\mu\mathrm dt\right]\geq0.
\end{equation*}
Let $\lambda\rightarrow0$, we have
\begin{equation*}
 \Re\left[\int_0^T\int_Q\left(\xi-\frac{g\left(\left|D_s^Av\left(x,y\right)\right|\right)}{\left|D_s^Av\left(x,y\right)\right|}D_s^Av\left(x,y\right)\right)\overline{D_s^A\tau\left(x,y\right)}\mathrm d\mu\mathrm dt\right]\geq0.
\end{equation*}
Obviously, let $\lambda\leq0$, we can derive a similar inequality using $\leq$ instead of $\geq$, then (\ref{014})  holds.

\textbf{Uniqueness}. Suppose that (\ref{03}) admits two weak solutions $\psi_1$ and $\psi_2$ with the same initial value condition. By subtracting the two equations corresponding to $\psi_1$ and $\psi_2$ respectively, and testing it with $\psi_1 - \psi_2$, we  have
\begin{equation*}
\begin{split}
&\Re\left[\int_0^t\int_\Omega\left[\left(\psi_{1\tau}-\psi_{2\tau}\right)\left(\overline{\psi_1}-\overline{\psi_2}\right)\right]\mathrm dx\mathrm d\tau+\int_0^t\int_QD_s^{}\left(\psi_{1\tau}-\psi_{2\tau}\right)D_s^{}\left(\overline{\psi_1}-\overline{\psi_2}\right)\mathrm d\mu\mathrm d\tau\right.\\
&\left.+\int_0^t\int_\Omega\left[\left(-\Delta_g^A\right)^s\psi_1-\left(-\Delta_g^A\right)^s\psi_2\right]\left(\overline{\psi_1}-\overline{\psi_2}\right)\mathrm dx\mathrm d\tau\right]=0,
\end{split}
\end{equation*}
where
\begin{equation*}
\begin{split}
&\Re\left[\int_0^t\int_\Omega\left(\left(-\Delta_g^A\right)^s\psi_1-\left(-\Delta_g^A\right)^s\psi_2\right)\left(\overline{\psi_1}-\overline{\psi_2}\right)\mathrm dx\mathrm d\tau\right]\\
&=\Re\left[\int_0^t\int_Q\left(\frac{g\left(\left|D_s^A\psi_1\right|\right)}{\left|D_s^A\psi_1\right|}D_s^A\psi_1-\frac{g\left(\left|D_s^A\psi_2\right|\right)}{\left|D_s^A\psi_2\right|}D_s^A\psi_2\right)\left(\overline{D_s^A\psi_1}-\overline{D_s^A\psi_2}\right)\mathrm d\mu\mathrm d\tau\right]\\
&\geq0.
\end{split}
\end{equation*}
Then, it is easy to know that ${\psi_1} \equiv {\psi_2}$.

\textbf{Step II:} Let $R^2=2\left\|u_0\right\|_{s,2,0}^2$, for any $T > 0$, set
\begin{equation*}
{X_T}: = \left\{ {u \in \mathcal{H}\left| {{{\left\| u \right\|}_\mathcal{H}} \le R} \right.} \right\}.
\end{equation*}
According to Step I, for any $u \in {X_T}$ and the unique solution $v \in \mathcal{H}$ of the problem (\ref{03}), we can define $v = \Lambda \left( u \right)$. Now, we prove that $\Lambda \left( {{X_T}} \right) \subset {X_T}$ is a contractive map.

(i) For given $u \in {X_T}$, Multiplying the equation of (\ref{04}) by $g_{jm}\left(t\right)$, summing up with respect to $j$,  we get
\begin{equation}\label{015}
\frac12\frac d{dt}\left\|v\right\|_{s,2,0}^2+\int_Qg\left(\left|D_s^Av\left(x,y\right)\right|\right)\left|D_s^Av\left(x,y\right)\right|\mathrm d\mu-\Re\left[\int_\Omega\left|u\right|^{p-1}u\overline v\mathrm dx\right]=0
\end{equation}

By similar estimates as (\ref{08}), we have
\begin{equation}\label{017}
\left|\int_\Omega\left|u\right|^{p-1}u\overline v\mathrm dx\right|\leq CR^{2p}+\frac 12\left[v\right]_{s,2}^2.
\end{equation}

Combining (\ref{015}) and (\ref{017}), from Gronwall's inequality, we obtain
\begin{equation*}
\left\|v\right\|_{\mathcal H}^2\leq e^T\left(\frac12R^2+CTR^{2p}\right).
\end{equation*}
Select a sufficiently small $T$ such that ${\left\| v \right\|_\mathcal{H}} \le R$, then $\Lambda \left( {{X_T}} \right) \subset {X_T}$.

(ii) Next, we prove that such map is contractive. Taking ${z_1},\;{z_2} \in {X_T}$ to be  the known functions in the right term of (\ref{03}) respectively, subtracting
the two equations in form of (\ref{03}) for ${v_1} = \Lambda \left( {{z_1}} \right)$ and ${v_2} = \Lambda \left( {{z_2}} \right)$ respectively, from (i), we know ${v_1},{v_2} \in {X_T}$. Setting $\widehat v = {v_1} - {v_2}$ and testing the both sides by $\widehat v$, we have
\begin{equation}\label{021(1)}
\begin{split}
&\Re\left[\frac12\left\|\widehat v\right\|_{s,2,0}^2+\int_0^t\int_Q^{}\left(\frac{g\left(\left|D_s^Av_1\right|\right)}{\left|D_s^Av_1\right|}D_s^Av_1-\frac{g\left(\left|D_s^Av_2\right|\right)}{\left|D_s^Av_2\right|}D_s^Av_2\right)\left(\overline{D_s^Av_1}-\overline{D_s^Av_2}\right)\mathrm d\mu\mathrm d\tau\right]\\
&=\Re\left[\int_0^t\int_\Omega^{}\left(\left|z_1\right|^{p-1}z_1-\left|z_2\right|^{p-1}z_2\right)\overline{\widehat v}\mathrm dx\mathrm d\tau\right].
\end{split}
\end{equation}
From Lemma \ref{Estimate of nonlinear term}, making use of H$\mathrm{\ddot{o}}$lder, Sobolev and Young's inequalities, we obtain
\begin{equation}\label{021}
\begin{split}
&\left|\int_0^t\int_\Omega\left(\left|z_1\right|^{p-1}z_1-\left|z_2\right|^{p-1}z_2\right)\overline{\widehat v}\mathrm dx\mathrm d\tau\right|\\
&\leq\int_0^t\int_\Omega p\left(\left|z_1\right|+\left|z_2\right|\right)^{p-1}\left|z_1-z_2\right|\left|\widehat v\right|\mathrm dx\mathrm d\tau\\
&\leq C\int_0^t\left(\int_\Omega\left(\left|z_1\right|+\left|z_2\right|\right)^{\left(p-1\right)A_1}\mathrm dx\right)^\frac1{A_1}{\left\|z_1-z_2\right\|}_{A_2}\left\|\widehat v\right\|_{A_3}\mathrm d\tau\\
&\leq C\int_0^t\left(\int_\Omega\left(\left|z_1\right|+\left|z_2\right|\right)^{\left(p-1\right)A_1}\mathrm dx\right)^\frac1{A_1}{\left\|z_1-z_2\right\|}_{A_2}\left[\widehat v\right]_{s,2}\mathrm d\tau\\
&\leq C\int_0^t\left\|\left|z_1\right|+\left|z_2\right|\right\|_{\left(p-1\right)A_1}^{\left(p-1\right)}{\left\|z_1-z_2\right\|}_{A_2}\left[\widehat v\right]_{s,2}\mathrm d\tau\\
&\leq CR^{2\left(p-1\right)}\int_0^t\left[z_1-z_2\right]_{s,2}^2\mathrm d\tau+\frac12\int_0^t\left[\widehat v\right]_{s,2}^2\mathrm d\tau,
\end{split}
\end{equation}
where $A_1=\frac{N}{2s}$, ${A_2} = {A_3} =\frac{2N}{N - 2s}$ by $p\leq\frac{N+2s}{N-2s}$.

From (\ref{021(1)}) and (\ref{021}), we have
\begin{equation*}
\left\|\widehat v\right\|_{s,2,0}^2\leq CR^{2\left(p-1\right)}\int_0^t\left[z_1-z_2\right]_{s,2}^2\mathrm d\tau+\int_0^t\left[\widehat v\right]_{s,2}^2\mathrm d\tau,
\end{equation*}
which gives
\begin{equation*}
\begin{split}
&\left\|\Lambda\left(z_1\right)-\Lambda\left(z_2\right)\right\|_{\mathcal H}^2=\left\|\widehat v\right\|_{\mathcal H}^2=\max_{t\in\left[0,T\right]}\left[\widehat v\right]_{s,2}^2\\
&\leq CR^{2\left(p-1\right)}\int_0^t\left[z_1-z_2\right]_{s,2}^2\mathrm d\tau+\int_0^t\left[\widehat v\right]_{s,2}^2\mathrm d\tau\\
&\leq CR^{2\left(p-1\right)}T\underset{t\in\left[0,T\right]}{\max}\left[z_1-z_2\right]_{s,2}^2+T\underset{t\in\left[0,T\right]}{\max}\left[\widehat v\right]_{s,2}^2,
\end{split}
\end{equation*}
i.e.,
\begin{equation*}
\begin{split}
&\left\|\Lambda\left(z_1\right)-\Lambda\left(z_2\right)\right\|_{\mathcal H}^2\leq\frac{CR^{2\left(p-1\right)}T}{1-T}\underset{t\in\left[0,T\right]}{\max}\left[z_1-z_2\right]_{s,2}^2\\
&\;\;\;\;\;\;\;\;\;\;\;\;\;\;\;\;\;\;\;\;\;\;\;\;\;\;\;\;\;\overset\triangle=\delta_T\left\|z_1-z_2\right\|_{\mathcal H}^2,
\end{split}
\end{equation*}
for some $\delta_T<1$ as long as $T$ is sufficiently small.  Thus the map $v = \Lambda \left( {u} \right)$ is contractive.  Using
the Contraction Mapping Principle, there exists a unique local weak solution to (\ref{1.1}) defined on $\left[0,T\right]$. Theorem \ref{theorem0} is proved.

\section{Global existence and uniqueness}\label{Global existence and uniqueness}
In this section, we deal with the global existence and uniqueness of the weak solution to (\ref{1.1}) under the subcritical initial energy, the critical initial energy and the supercritical initial energy, respectively.

\textbf{Proof of Theorem \ref{theorem1}.}

(i) {\bf Global existence.} Similar to the proof process of Theorem \ref{theorem0}, let ${\left\{\omega_j\left(x\right)\right\}}_{j\in\mathbb{N}}$ be the orthogonal basis in $ W_{A,0}^{s,G}\left(\Omega,\mathbb{C}\right)\cap H_0^s\left(\Omega,\mathbb{C}\right)$.  We look for the approximate solutions of the following form
\begin{equation*}
u_m\left(x,t\right)=\sum_{j=1}^mg_{jm}\left(t\right)\omega_j(x),\;\;\;m=1,2,\cdots,
\end{equation*}
where $g_{jm}\left(t\right):\left[0,T\right]\rightarrow\mathbb{R}$ satisfies the system of ODEs
\begin{equation}\label{3.1.1}
\begin{split}
&\Re \left[ \int_\Omega  {u_{mt}} \overline {\omega _s}\mathrm dx +\int_QD_s^{}u_{mt}\left(x,y\right)\overline{D_s^{}\omega_s\left(x,y\right)}\mathrm d\mu\right.\\
&\left.+ \int_Q {\frac{{g\left( {\left| {D_s^A{u_m}\left( {x,y} \right)} \right|} \right)}}{{\left| {D_s^A{u_m}\left( {x,y} \right)} \right|}}} D_s^A{u_m}\left( {x,y} \right)\overline {D_s^A{\omega _s}\left( {x,y} \right)}  \mathrm d\mu \right]= \Re \left[ \int_\Omega  f\left( u_m\right){\overline {\omega _s}}\mathrm dx\right],
\end{split}
\end{equation}
\begin{equation}\label{3.1.2}
u_m\left(x,0\right)=\sum_{j=1}^ma_{jm}\omega_j(x)\rightarrow u_0(x)\;\mathrm{in}\;W_{A,0}^{s,G}\left(\Omega,\mathbb{C}\right)\cap {H_0^s}\left( \Omega,\mathbb{C}\right),
\end{equation}
for $1\leq s\leq m$, in which
\begin{equation*}
a_{jm}=g_{jm}{(0)}\;\mathrm{and}\;f\left(u\right)=\left|u\right|^{p-1}u.
\end{equation*}

Multiplying the equation of (\ref{3.1.1})  by $g_{jm}\left(t\right)$  and summing for $j$, we have
\begin{equation}\label{3.1.3}
\frac12\frac d{dt}\left\|u_m\right\|_{s,2,0}^2+I\left(u_m\right)=0.
\end{equation}

Observe that by Proposition \ref{proposition}, we obtain $u_m\in W$, i.e., $I\left(u_m\right)>0$. Then from (\ref{3.1.3}) we have
\begin{equation*}
\frac12\frac d{dt}\left\|u_m\right\|_{s,2,0}^2<0,
\end{equation*}
which applying Gronwall's inequality gives
\begin{equation}\label{3.1.3a}
  \left\|u_m\right\|_{s,2,0}^2<C,\;\;\;0\leq t<\infty.
\end{equation}
Multiplying both sides of (\ref{3.1.1}) by $g'_{jm}\left(t\right)$ and summing on $j$, integrating with respect to $t$ from $0$ to $t$, we obtain
\begin{equation}\label{3.1.4}
\int_0^t\left\|u_{m\tau}\right\|_{s,2,0}^2\operatorname d\tau+J\left(u_m\right)=J\left(u_m\left(0\right)\right).
\end{equation}

From (\ref{3.1.2}), we have $J\left(u_m\left(0\right)\right)\rightarrow J\left(u_0\right)<d$ and $I\left(u_m\left(0\right)\right)\rightarrow I\left(u_0\right)>0$, which combining (\ref{3.1.4}) yields that
\begin{equation}\label{3.1.5}
\int_0^t\left\|u_{m\tau}\right\|_{s,2,0}^2\mathrm d\tau+J\left(u_m\right)<d.
\end{equation}

Combining (\ref{3.1.5}) and
\begin{equation}\label{3.1.5b}
J\left( u_m \right) \ge \left( {\frac{1}{q^ + } - \frac{1}{{p + 1}}} \right) \int_Q g \left( {\left| {D_s^Au_m\left( {x,y} \right)} \right|} \right)\left| {D_s^Au_m\left( {x,y} \right)} \right|\mathrm d\mu+ \frac{1}{p + 1}I\left( {{u_m}} \right),
\end{equation}
we obtain
\begin{equation*}
\int_0^t {\left\| {u_{m\tau }} \right\|_{s,2,0}^2}\mathrm d\tau  + \left( {\frac{1}{q^ + } - \frac{1}{p + 1}} \right)\int_Q g {\left| {D_s^Au\left( {x,y} \right)} \right|}\left| {D_s^Au\left( {x,y} \right)} \right|\mathrm d\mu< d,
\end{equation*}
where $m$ is sufficiently large, which implies a priori estimate
\begin{equation}\label{3.1.5a}
\begin{split}
&\int_0^t\left\|u_{m\tau}\right\|_{s,2,0}^2\mathrm d\tau<d,\\
&\int_Qg\left(\left|D_s^Au\left(x,y\right)\right|\right)\left|D_s^Au\left(x,y\right)\right|\mathrm d\mu<\frac{p+1-q^+}{q^+\left(p+1\right)}d,\\
&\left\|u_m^p\right\|_\gamma^\gamma=\left\|u_m\right\|_{p+1}^{p+1}\leq\frac{C_\ast^{p+1}}{\left(q^-\right)^{\frac{p+1}{q^-}}}\max\left\{\left(\frac{ p+1- q^+}{ q^+\left( p+1\right)} d\right)^\frac{ p+1}{q^+},\left(\frac{ p+1-q^+}{q^+\left(p+1\right)} d\right)^\frac{p+1}{q^-}\right\},\;\gamma=\frac{p+1}p.
\end{split}
\end{equation}
An estimation process similar to (\ref{013a}), we have
\begin{equation}\label{3.1.5b}
{\left\|\frac{g\left(\left|D_s^Au_m\right|\right)}{\left|D_s^Au_m\right|}D_s^Au_m\right\|}_{\widetilde G}\leq C\max\left\{\left(\frac{\left(p+1-q^+\right)}{q^+\left(p+1\right)}d\right)^\frac1{q^+},\left(\frac{\left(p+1-q^+\right)}{q^+\left(p+1\right)}d\right)^\frac1{q^-}\right\}.
\end{equation}

Therefore, from (\ref{3.1.3a}), (\ref{3.1.5a}) and (\ref{3.1.5b}), there exist a $u$ and a subsequence of $\left\{u_m\right\}$, which is still denoted by
itself, such that
\begin{equation*}
\left\{\begin{split}
&u_m\rightharpoonup u\;\;\mathrm{in}\;L^\infty\left(0,\infty;W_{A,0}^{s,G}\left(\Omega,\mathbb{C}\right)\cap H_0^s\left(\Omega,\mathbb{C}\right)\right)\;\mathrm{weak}\;\mathrm{star},\\
&\frac{g\left(\left|D_s^Au_m\left(x,y\right)\right|\right)}{\left|D_s^Au_m\left(x,y\right)\right|}D_s^Au_m\left(x,y\right)\rightharpoonup\xi\;\mathrm{in}\;L^\infty\left(\left[0,T\right];L_\mu^{G^\ast}\left(\Omega,\mathbb{C}\right)\right)\;\mathrm{weak}\;\mathrm{star},\\
&u_{mt}\rightharpoonup u_t\;\;\mathrm{in}\;L^2\left(0,\infty; H_0^s\left(\Omega,\mathbb{C}\right)\right)\;\mathrm{weak},\\
&\left|u_m\right|^{p-1}u_m\rightharpoonup\left|u\right|^{p-1}u\;\;\mathrm{in}\;L^\infty\left(0,\infty; L^\frac{p+1}p(\Omega,\mathbb{C})\right)\;\mathrm{weak}\;\mathrm{star}.
\end{split}
\right.
\end{equation*}

For $s$ fixed and  we can pass to the limit in (\ref{3.1.1}) to get
\begin{equation*}
\Re\left[\int_\Omega u_t{\overline\omega}_s\mathrm dx+\int_QD_s^{}u_t\left(x,y\right)\overline{D_s^{}\omega_s\left(x,y\right)}\mathrm d\mu+\int_Q\xi\overline{D_s^A\omega_s\left(x,y\right)}\mathrm d\mu\right]=\Re\left[\int_\Omega f\left(u\right){\overline\omega}_s\mathrm dx\right]
\end{equation*}
and
\begin{equation*}
\begin{split}
&\Re\left[\int_\Omega u_t\overline v\mathrm dx+\int_QD_s^{}u_t\left(x,y\right)\overline{D_s^{}v\left(x,y\right)}\mathrm d\mu+\int_Q\xi\overline{D_s^Av\left(x,y\right)}\mathrm d\mu\right]\\
&=\Re\left[\int_\Omega f\left(u\right)\overline vdx\right]\;\mathrm{for}\;\mathrm{all}\;v\in W_{A,0}^{s,G}\left(\Omega,\mathbb{C}\right)\cap H_0^s\left(\Omega,\mathbb{C}\right),\;t\in\left(0,\infty\right).
\end{split}
\end{equation*}
Similar to the proof of Theorem \ref{theorem0}, we have
\begin{equation*}
\Re\left[\int_Q\xi\overline{D_s^Av\left(x,y\right)}\mathrm d\mu\right]=\Re\left[\int_Q\frac{g\left(\left|D_s^Au\left(x,y\right)\right|\right)}{\left|D_s^Au\left(x,y\right)\right|}D_s^Au\left(x,y\right)\overline{D_s^A\varphi\left(x,y\right)}\mathrm d\mu\right].
\end{equation*}
In addition, (\ref{3.1.2}) gives $u(x,0)=u_0(x)$ in $W_{A,0}^{s,G}\left(\Omega,\mathbb{C}\right)\cap H_0^s\left(\Omega,\mathbb{C}\right)$. The global existence is proved.

{\bf Uniqueness.} Assuming that $\eta_1$ and $\eta_2$ are two weak solutions to (\ref{1.1}) with the same initial data. We define  $v=\eta_1-\eta_2$  and obtain
\begin{equation*}
\eta_1,\eta_2,v\in L^\infty\left(0,\infty;W_{A,0}^{s,G}\left(\Omega,\mathbb{C}\right)\cap H_0^s\left(\Omega,\mathbb{C}\right)\right),\;\eta_{1t},\eta_{2t},v_t\in L^2\left(0,\infty;H_0^s\left(\Omega,\mathbb{C}\right)\right),
\end{equation*}
\begin{equation}\label{3.1.7}
v_t+\left(-\Delta\right)^sv_t+\left(-\Delta_g^A\right)^s\eta_1-\left(-\Delta_g^A\right)^s\eta_2=\left|\eta_1\right|^{p-1}\eta_1-\left|\eta_2\right|^{p-1}\eta_2.
\end{equation}

Multiplying (\ref{3.1.7}) by $v$ and integrating over $\left(0,t\right)\times\Omega$, we have
\begin{equation}\label{3.1.8}
\begin{split}
&\Re\left[\int_0^t\int_\Omega{\left(v_\tau\overline v+\left(-\Delta\right)^sv_t\overline v+\left(\left(-\Delta_g^A\right)^s\eta_1-\left(-\Delta_g^A\right)^s\eta_2\right)\overline v\right)}\mathrm dx\mathrm d\tau\right]\\
&=\Re\left[\int_0^t\int_\Omega^{}\left(\vert \eta_1\vert^{p-1}\eta_1-\vert \eta_2\vert^{p-1}\eta_2\right)\overline v\mathrm dx\mathrm d\tau\right].
\end{split}
\end{equation}

By Lemma \ref{Estimate of nonlinear term}, we have
\begin{equation}\label{3.1.9}
\begin{split}
&\left|\int_0^t\int_\Omega\left(\left|\eta_1\right|^{p-1}\eta_1-\left|\eta_2\right|^{p-1}\eta_2\right)\overline v\mathrm dx\mathrm d\tau\right|\\
&\leq\int_0^t\int_\Omega p\left(\left|\eta_1\right|+\left|\eta_2\right|\right)^{p-1}\left|\eta_1-\eta_2\right|\left|v\right|\mathrm dx\mathrm d\tau\\
&\leq C\int_0^t{\left\|\left(\left|\eta_1\right|+\left|\eta_2\right|\right)^{p-1}\right\|}_{B_1}{\left\|\eta_1-\eta_2\right\|}_{B_2}{\left\|v\right\|}_{B_3}\mathrm d\tau\\
&=C\int_0^t\left\|\left|\eta_1\right|+\left|\eta_2\right|\right\|_{{\left(p-1\right)}B_1}^{p-1}{\left\|\eta_1-\eta_2\right\|}_{B_2}{\left\|v\right\|}_{B_3}\mathrm d\tau\\
&\leq C\int_0^t{\left\|\eta_1-\eta_2\right\|}_{s,2,0}{\left\|v\right\|}_{s,2,0}\mathrm d\tau\\
&\leq C\int_0^t\left\|v\right\|_{s,2,0}^2\mathrm d\tau,
\end{split}
\end{equation}
where $B_1=\frac N{2s}$ and $B_2=B_3=\frac{2N}{N-2s}$ by $p\leq\frac{sq^-+N}{N-sq^-}$. Noticing
that $v(x,0)=0$, we get
\begin{equation}\label{3.1.10}
\Re\left[\int_0^t\int_\Omega \left(v_\tau \overline v\mathrm dx+\left(-\Delta\right)^sv_t\overline v\right)\mathrm d\tau\right]=\frac12\left\|v\left(t\right)\right\|_{s,2,0}^2.
\end{equation}
On the other hand, we obtain
\begin{equation}\label{3.1.11}
\begin{split}
&\Re\left[\int_\Omega\left(\left(-\Delta_g^A\right)^s\eta_1\left(x\right)-\left(-\Delta_g^A\right)^s\eta_2\left(x\right)\right)\overline v\left(x\right)\mathrm dx\right]\\
&=\Re\left[\int_Q\left(\frac{g\left(\left|D_s^A\eta_1\left(x,y\right)\right|\right)}{\left|D_s^A\eta_1\left(x,y\right)\right|}D_s^A\eta_1\left(x,y\right)-\frac{g\left(\left|D_s^A\eta_2\left(x,y\right)\right|\right)}{\left|D_s^A\eta_2\left(x,y\right)\right|}D_s^A\eta_2\left(x,y\right)\right)\overline{D_s^Av\left(x,y\right)}\mathrm d\mu\right]\\
&\geq0.
\end{split}
\end{equation}
Then from (\ref{3.1.8})-(\ref{3.1.11}), we find
\begin{equation*}
\left\|v\right\|_{s,2,0}^2\leq C\int_0^t\left\|v\right\|_{s,2,0}^2\mathrm d\tau.
\end{equation*}
It follows from Gronwall's inequality that $\left\|v\right\|_{s,2,0}^2\leq0$, i.e., $\left\|v\right\|_{s,2,0}^2=\left\|\eta_1-\eta_2\right\|_{s,2,0}^2=0$. Thus $\eta_1=\eta_2=0$ a.e. in $\Omega\times\left(0,\infty\right)$. The uniqueness is proved.

(ii) Let $\mu_s=1-\frac1s$ and $u_{s0}=\mu_su_0$ for $s=2,3,\cdots$. We consider the problem (\ref{1.1}) with the condition
\begin{equation}\label{3.2.1}
u\left(x,0\right)=u_{s0}(x),\;s=2,3,\cdots.
\end{equation}

We assert $I(u_{s0})>0$ and $J(u_{s0})<d$. In fact, by the condition $J\left(u_0\right)=d$,  we know $\rho_{s,G}^A\left(u_0\right)\neq0$.  If $\int_\Omega\left|u_0\right|^{p+1}\operatorname dx=0$, combining
\begin{equation*}
  J\left(u_0\right)\leq\left(\frac1{q^-}-\frac1{p+1}\right)\int_Qg\left(\left|D_s^Au_0\left(x,y\right)\right|\right)\left|D_s^Au_0\left(x,y\right)\right|\mathrm d\mu+\frac1{p+1}I\left(u_0\right)
\end{equation*}
and $J\left(u_0\right)=d$, we have $I\left(u_0\right)\geq\left(p+1\right)d$. Then from (\ref{2.1.6}) and (\ref{2.1.7}), we obtain
\begin{equation*}
\begin{split}
&I(u_{s0})=I(\mu_su_0)=\int_Qg\left(\left|\mu_sD_s^Au_0\left(x,y\right)\right|\right)\left|\mu_sD_s^Au_0\left(x,y\right)\right|\mathrm d\mu\\
&\;\;\;\;\;\;\;\;\;\;\;\;\;\;\;\;\;\;\;\;\;\;\;\;\;\;\geq \min\left\{\mu_s^{q^-},\mu_s^{q^+}\right\}\int_Qg\left(\left|D_s^Au_0\left(x,y\right)\right|\right)\left|D_s^Au_0\left(x,y\right)\right|\mathrm d\mu\\
&\;\;\;\;\;\;\;\;\;\;\;\;\;\;\;\;\;\;\;\;\;\;\;\;\;\;=\min\left\{\mu_s^{q^-},\mu_s^{q^+}\right\}I\left(u_0\right)\\
&\;\;\;\;\;\;\;\;\;\;\;\;\;\;\;\;\;\;\;\;\;\;\;\;\;\;>0,
\end{split}
\end{equation*}
and
\begin{equation*}
\begin{split}
&J(u_{s0})=J(\mu_su_0)=\int_QG\left(\left|\mu_sD_s^Au_0\left(x,y\right)\right|\right)\mathrm d\mu\\
&\;\;\;\;\;\;\;\;\;\;\;\;\;\;\;\;\;\;\;\;\;\;\;\;\;\;\leq \max\left\{\mu_s^{q^-},\mu_s^{q^+}\right\}\int_QG\left(\left|D_s^Au_0\left(x,y\right)\right|\right)\mathrm d\mu\\
&\;\;\;\;\;\;\;\;\;\;\;\;\;\;\;\;\;\;\;\;\;\;\;\;\;\;=\max\left\{\mu_s^{q^-},\mu_s^{q^+}\right\}d\\
&\;\;\;\;\;\;\;\;\;\;\;\;\;\;\;\;\;\;\;\;\;\;\;\;\;\;<d.
\end{split}
\end{equation*}
If $\int_\Omega\left|u_0\right|^{p+1}\operatorname dx\neq0$, then from Lemma \ref{Properties of J} and the initial data $I(u_0)\geq0$,  we have $\lambda^\ast\geq1$. Then, $I(u_{s0})=I(\mu_su_0)>0$ and $J(u_{s0})=J(\mu_su_0)<J(u_0)=d$.

Using the similar arguments as in (i), we know that problem (\ref{1.1}) with initial condition (\ref{3.2.1}) has a unique global solution $u_s\in L^\infty\left(0,\infty;W_{A,0}^{s,G}\left(\Omega,\mathbb{C}\right)\cap H_0^s\left(\Omega,\mathbb{C}\right)\right)$  with $u_{st}\in L^2\;(0,\infty;H_0^s(\Omega,\mathbb{C}))$ for each $s=2,3,\cdots$. According to Proposition \ref{proposition}, we can easily know that $u_s\in W$. Similar to the proof in (i), we can easily obtain
\begin{equation*}
  \left\|u_s\right\|_{s,2,0}^2<C,
\end{equation*}
\begin{equation*}
\int_0^t\left\|u_{s\tau}\right\|_{s,2,0}^2\mathrm  d\tau<d,
\end{equation*}
\begin{equation*}\label{3.2.2}
\int_Qg\left(\left|D_s^Au_s\left(x,y\right)\right|\right)\left|D_s^Au_s\left(x,y\right)\right|\mathrm d\mu<\frac{p+1-q^+}{q^+\left(p+1\right)}d,
\end{equation*}
\begin{equation*}\label{3.2.3}
\left\|u_s^p\right\|_\gamma^\gamma=\left\|u_s\right\|_{p+1}^{p+1}\leq\frac{C_\ast^{p+1}}{\left(q^-\right)^{\frac{p+1}{q^-}}}\max\left\{\left(\frac{ p+1- q^+}{ q^+\left( p+1\right)} d\right)^\frac{ p+1}{q^+},\left(\frac{ p+1-q^+}{q^+\left(p+1\right)} d\right)^\frac{p+1}{q^-}\right\},\;\gamma=\frac{p+1}p,
\end{equation*}
and
\begin{equation*}
{\left\|\frac{g\left(\left|D_s^Au_s\right|\right)}{\left|D_s^Au_s\right|}D_s^Au_s\right\|}_{\widetilde G}\leq C\max\left\{\left(\frac{\left(p+1-q^+\right)}{q^+\left(p+1\right)}d\right)^\frac1{q^+},\left(\frac{\left(p+1-q^+\right)}{q^+\left(p+1\right)}d\right)^\frac1{q^-}\right\}.
\end{equation*}
The rest is proved similar to (i).

(iii) Consider the basis $u_m\left(x,t\right)$ as (i), we assert that $u_m\in{\mathcal N}_+$.

If it is false, there exists a $t_0>0$ such that $u_m\in{\mathcal N}_+$ for $t\in\left(0,t_0\right)$ and $u_m\left(t_0\right)\in{\mathcal N}$. Taking $\varphi=u_m$,
we obtain
\begin{equation*}
\frac12\frac d{dt}\left\|u_m\right\|_{s,2,0}^2+\int_Qg\left(\left|D_s^Au_m\left(x,y\right)\right|\right)\left|D_s^Au_m\left(x,y\right)\right|\mathrm d\mu=\int_\Omega\left|u_m\right|^{p+1}\mathrm dx,
\end{equation*}
Thus,
\begin{equation}\label{3.3.1}
\frac12\frac d{dt}\left\|u_m\right\|_{s,2,0}^2=-I\left(u_m\right),
\end{equation}
which means that $\left\|u_m\right\|_{s,2,0}^2$ are bounded for $\Omega\times\left(0,t_0\right)$. Hence, we deduce that $u_m(t_0)\in J^{J\left(u_m\left(0\right)\right)}$ from (\ref{3.1.4}). Therefore, $u_m(t_0)\in\mathcal N^{J\left(u_m\left(0\right)\right)}$.By the definition of $\lambda_{J(u_0)}$, we get
\begin{equation}\label{3.3.2}
  \left\|u_m\left(t_0\right)\right\|_{s,2,0}^2\geq\lambda_{J(u_0)}.
\end{equation}

Based on  fact $I\left(u_m\right)>0$ and combined with (\ref{3.3.1}), we reach
\begin{equation*}
\left\|u_m\left(t_0\right)\right\|_{s,2,0}^2<\left\|u_m\left(x,0\right)\right\|_{s,2,0}^2\leq\lambda_{J(u_0)},
\end{equation*}
which contradicts (\ref{3.3.2}). Therefore, for all $t_0>0$, $u_m\in{\mathcal N}_+$.

Furthermore, we can obtain $u_m\left(t\right)\in J^{J\left(u_m\left(0\right)\right)}\cap{\mathcal N}_+$. On the other hand, it can be inferred from (\ref{3.1.4}) that
\begin{equation*}
\begin{split}
&J\left(u_m\left(0\right)\right)\geq J\left(u_m\left(t\right)\right) \\
&\ge \left( {\frac{1}{q^ + } - \frac{1}{p + 1}} \right)\int_Q g \left( {\left| {D_s^A{u_m}\left( {x,y} \right)} \right|} \right)\left| {D_s^A{u_m}\left( {x,y} \right)} \right|\mathrm d\mu+ \frac{1}{p + 1}I\left( {u_m} \right)
\end{split}
\end{equation*}
and
\begin{equation*}
  J\left(u_m\left(0\right)\right)\geq\int_0^t\left\|u_{m\tau}\right\|_{s,2,0}^2\operatorname d\tau,
\end{equation*}
which means that $\left( {\frac{1}{q^ + } - \frac{1}{p + 1}} \right)\int_Q g \left( {\left| {D_s^A{u_m}\left( {x,y} \right)} \right|} \right)\left| {D_s^A{u_m}\left( {x,y} \right)} \right|\mathrm d\mu$ and $\int_0^t\left\|u_{m\tau}\right\|_{s,2,0}^2\operatorname d\tau$ are bounded. Similar to the proof process in (i), the problem (\ref{1.1}) has a unique global weak solution $u$.

Theorem \ref{theorem1} is proved.

\section{Asymptotic behavior}\label{Asymptotic behavior}

In this section, we shall prove the asymptotic behavior of solutions to problem (\ref{1.1}) under sub-sharp-critical initial energy, sharp-critical initial energy and supercritical initial energy, respectively.

The following lemma gives the key inequality required by the theorem, and is the reason why we need to take the $J\left(u_0\right)<M$ condition instead of $J\left(u_0\right)<d$.

\begin{lemma}\label{Inequality for J<M}
Let $J\left(u_0\right)<M$, $I\left(u\right)>0$ and $u(t)$ be a weak solution of (\ref{1.1}), then
\begin{equation*}
\int_Q g \left( {\left| {D_s^Au\left( {x,y} \right)} \right|} \right)\left| {D_s^Au\left( {x,y} \right)} \right|{\rm{d}}\mu <1.
\end{equation*}
\end{lemma}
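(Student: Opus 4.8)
The plan is to combine the energy identity with the sign condition $I(u)>0$ and the $N$-function estimate (\ref{2.1.4}); no serious obstacle is expected, the argument being a short chain of inequalities in which $M$ has been defined exactly so that the bound lands below $1$.

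First I would use the energy identity (\ref{Weak solution equation2}) to note that, since $\int_0^t\|u_\tau\|_{s,2,0}^2\mathrm d\tau\ge0$, we have $J(u(t))\le J(u_0)<M$ for every $t$ in the interval of existence. Next I would bound $J(u)$ from below in terms of $\int_Q g(|D_s^Au(x,y)|)|D_s^Au(x,y)|\mathrm d\mu$ alone. Applying (\ref{2.1.4}) in the form $G(t)\ge\frac1{q^+}tg(t)$ gives $\rho_{s,G}^A(u)=\int_Q G(|D_s^Au(x,y)|)\mathrm d\mu\ge\frac1{q^+}\int_Q g(|D_s^Au(x,y)|)|D_s^Au(x,y)|\mathrm d\mu$, while $I(u)>0$ gives $\int_\Omega|u|^{p+1}\mathrm dx<\int_Q g(|D_s^Au(x,y)|)|D_s^Au(x,y)|\mathrm d\mu$. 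Substituting both into the definition of $J$,
\begin{equation*}
J(u)\ge\left(\frac1{q^+}-\frac1{p+1}\right)\int_Q g\left(\left|D_s^Au(x,y)\right|\right)\left|D_s^Au(x,y)\right|\mathrm d\mu,
\end{equation*}
where the coefficient is positive because $q^+<p+1$ by $(H_1)$.

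Finally I would combine this with $J(u)<M=\left(\frac1{q^+}-\frac1{p+1}\right)\min\{h(1),1\}$ and divide by the positive constant $\frac1{q^+}-\frac1{p+1}$ to obtain
\begin{equation*}
\int_Q g\left(\left|D_s^Au(x,y)\right|\right)\left|D_s^Au(x,y)\right|\mathrm d\mu<\min\{h(1),1\}\le1,
\end{equation*}
which is the claim. The only point that needs care is that the hypothesis $I(u)>0$ is used along the whole existence interval; in the applications this is where one invokes the invariance of the set $W$ under the flow (via Proposition \ref{proposition} together with a continuity argument), but for the present statement it is part of the assumption and the computation above is all that is required.
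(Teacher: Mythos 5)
Your proposal is correct and follows essentially the same route as the paper: both arguments derive $J(u)>\left(\frac1{q^+}-\frac1{p+1}\right)\int_Q g\left(\left|D_s^Au\right|\right)\left|D_s^Au\right|\mathrm d\mu$ from $I(u)>0$ together with (\ref{2.1.4}), and then combine this with $J(u)\leq J(u_0)<M$ from the energy identity (\ref{Weak solution equation2}). The only difference is in the last step, where you divide by the positive constant $\frac1{q^+}-\frac1{p+1}$ to get the bound $\min\{h(1),1\}\leq1$ directly, whereas the paper instead raises the resulting bound to the power $\frac{p+1-q^+}{q^+}$ and compares with the explicit formula for $h(1)$ (producing the auxiliary inequality (\ref{4.1.4}), which it reuses in the proof of Theorem \ref{theorem2}); your shortcut is equally valid for the lemma itself.
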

\begin{proof}
From  the definitions of $J\left(u\right)$ and $I\left(u\right)$  and $I\left(u\right)>0$, we have
\begin{equation*}
  J(u)> \left( {\frac{1}{q^ + } - \frac{1}{p + 1}} \right) \int_Q g \left( {\left| {D_s^A{u}\left( {x,y} \right)} \right|} \right)\left| {D_s^A{u}\left( {x,y} \right)} \right|\mathrm d\mu ,
\end{equation*}
which from (\ref{Weak solution equation2}) implies
\begin{equation}\label{4.1.3}
\int_Q g \left( {\left| {D_s^A{u}\left( {x,y} \right)} \right|} \right)\left| {D_s^A{u}\left( {x,y} \right)} \right|\mathrm d\mu  <\frac{{q^ + }\left( {p + 1} \right)}{p + 1 - {q^ + }}J\left(u\right)\leq\frac{{q^ + }\left( {p + 1} \right)}{p + 1 - {q^ + }}J\left(u_0\right).
\end{equation}

By (\ref{4.1.3}) and $J\left(u_0\right)<M$,  we get
\begin{equation}\label{4.1.4}
\begin{split}
&\max\left\{\frac{C_\ast^{p+1}}{\left(q^-\right)^\frac{p+1}{q^-}},1\right\}\left(\int_Q g \left( {\left| {D_s^Au\left( {x,y} \right)} \right|} \right)\left| {D_s^Au\left( {x,y} \right)} \right|\mathrm d\mu\right)^{\frac{p + 1 - {q^ + }}{q^ + }}\\
&<\max\left\{\frac{C_\ast^{p+1}}{\left(q^-\right)^\frac{p+1}{q^-}},1\right\}\left(\frac{{q^ + }\left( {p + 1} \right)}{p + 1 - {q^ + }}J\left( {u_0} \right)\right)^\frac{p + 1 - {q^ + }}{q^ + }\\
&< 1,
\end{split}
\end{equation}
which says $\int_Q g \left( {\left| {D_s^Au\left( {x,y} \right)} \right|} \right)\left| {D_s^Au\left( {x,y} \right)} \right|{\rm{d}}\mu <1$.
\end{proof}

\

\noindent\textbf{Proof of Theorem \ref{theorem2}.}

(i) Recalling Proposition \ref{proposition}, it is easy to know $u\in W_\delta$ for $1\leq\delta<\overline\delta$ or $\delta_1<\delta<\delta_2$ with $\delta_1<1<\delta_2$, and particularly $I(u)>0$. Then from (\ref{3.3.1}) and Lemma \ref{Inequality for J<M}, we have
\begin{equation}\label{4.1.1}
\begin{split}
&\frac12\frac d{dt}\left\|u\right\|_{s,2,0}^2=-I\left(u\right)\\
&=-\int_Qg\left(\left|D_s^Au\left(x,y\right)\right|\right)\left|D_s^Au\left(x,y\right)\right|\mathrm d\mu+\int_\Omega\left|u\right|^{p+1}\mathrm dx\\
&\leq-\int_Qg\left(\left|D_s^Au\left(x,y\right)\right|\right)\left|D_s^Au\left(x,y\right)\right|\mathrm d\mu+\frac{C_\ast^{p+1}}{\left(q^-\right)^\frac{p+1}{q^-}}\left(\int_Qg\left(\left|D_s^Au\left(x,y\right)\right|\right)\left|D_s^Au\left(x,y\right)\right|\mathrm d\mu\right)^\frac{p+1}{q^+}\\
&\leq\left(\frac{C_\ast^{p+1}}{\left(q^-\right)^\frac{p+1}{q^-}}\left(\int_Qg\left(\left|D_s^Au\left(x,y\right)\right|\right)\left|D_s^Au\left(x,y\right)\right|\mathrm d\mu\right)^\frac{p+1-q^+}{q^+}-1\right)\\
&\;\;\;\;\int_Qg\left(\left|D_s^Au\left(x,y\right)\right|\right)\left|D_s^Au\left(x,y\right)\right|\mathrm d\mu.
\end{split}
\end{equation}

From (\ref{4.1.4}), we obtain
\begin{equation}\label{4.1.2}
\max\left\{\frac{C_\ast^{p+1}}{\left(q^-\right)^\frac{p+1}{q^-}},1\right\}\left(\int_Qg\left(\left|D_s^Au\left(x,y\right)\right|\right)\left|D_s^Au\left(x,y\right)\right|\mathrm d\mu\right)^\frac{p+1-q^+}{q^+}-1<0.
\end{equation}

Considering (\ref{4.1.3}), (\ref{4.1.1}) and (\ref{4.1.2}), we get
\begin{equation*}
\begin{split}
&\frac12\frac d{dt}\left\|u\right\|_{s,2,0}^2\\
&\leq C\left(\max\left\{\frac{C_\ast^{p+1}}{\left(q^-\right)^\frac{p+1}{q^-}},1\right\}\left(\int_Qg\left(\left|D_s^Au\left(x,y\right)\right|\right)\left|D_s^Au\left(x,y\right)\right|\mathrm d\mu\right)^\frac{p+1-q^+}{q^+}-1\right)\left[u\right]_{s,2}^2\\
&<
C\left(\max\left\{\frac{C_\ast^{p+1}}{\left(q^-\right)^\frac{p+1}{q^-}},1\right\}\left(\frac{{q^ + }\left( {p + 1} \right)}{p + 1 - {q^ + }}J\left( {u_0} \right)\right)^\frac{p + 1 - {q^ + }}{q^ + }-1\right)\left[u\right]_{s,2}^2.
\end{split}
\end{equation*}
Then,  there exists $\delta>0$ such that
\begin{equation*}
\frac d{dt}\left\|u\right\|_{s,2,0}^2<-\delta_1\left\|u\right\|_{s,2,0}^2,
\end{equation*}
where
\begin{equation*}
-\delta_1:=C\left( \max\left\{\frac{C_\ast^{p+1}}{\left(q^-\right)^\frac{p+1}{q^-}},1\right\}\left(\frac{{q^ + }\left( {p + 1} \right)}{p + 1 - {q^ + }}J\left( {u_0} \right)\right)^\frac{p + 1 - {q^ + }}{q^ + }-1\right).
\end{equation*}
Using Gronwall's inequality, we have
\begin{equation*}\label{4.1.7}
\left\|u\right\|_{s,2,0}^2<\left\|u_0\right\|_{s,2,0}^2e^{-\delta_1 t},\;0\leq t<\infty
\end{equation*}
for $-\delta_1>0$.

From  (\ref{2.1.4}), we get
\begin{equation}\label{4.1.8}
\begin{split}
&J\left(u\right)+\left\|u\right\|_{s,2,0}^2\leq\rho_{s,G}^A\left(u\right)+C\int_Qg\left(\left|D_s^Au\left(x,y\right)\right|\right)\left|D_s^Au\left(x,y\right)\right|\mathrm d\mu\\
&\;\;\;\;\;\;\;\;\;\;\;\;\;\;\;\;\;\;\;\;\;\;\;\leq\left(\frac1{q^-}+C\right)\int_Qg\left(\left|D_s^Au\left(x,y\right)\right|\right)\left|D_s^Au\left(x,y\right)\right|\mathrm d\mu.
\end{split}
\end{equation}

By (\ref{Weak solution equation2}), (\ref{4.1.1}) and (\ref{4.1.8}), we have
\begin{equation*}
\begin{split}
&\frac d{dt}\left(J\left(u\right)+\left\|u\right\|_{s,2,0}^2\right)=-\left\|u_t\right\|_{s,2,0}^2+\frac d{dt}\left\|u\right\|_{s,2,0}^2\leq\frac d{dt}\left\|u\right\|_{s,2,0}^2\\
&\leq\left(\frac{C_\ast^{p+1}}{\left(q^-\right)^\frac{p+1}{q^-}}\left(\int_Qg\left(\left|D_s^Au\left(x,y\right)\right|\right)\left|D_s^Au\left(x,y\right)\right|\mathrm d\mu\right)^\frac{p+1-q^+}{q^+}-1\right)\\
&\;\;\;\;\int_Qg\left(\left|D_s^Au\left(x,y\right)\right|\right)\left|D_s^Au\left(x,y\right)\right|\mathrm d\mu\\
&\leq\left(\frac{C_\ast^{p+1}}{\left(q^-\right)^\frac{p+1}{q^-}}\left(\int_Qg\left(\left|D_s^Au\left(x,y\right)\right|\right)\left|D_s^Au\left(x,y\right)\right|\mathrm d\mu\right)^\frac{p+1-q^+}{q^+}-1\right)\\
&\;\;\;\;\frac{q^-}{1+Cq^-}\left(J\left(u\right)+\left\|u\right\|_{s,2,0}^2\right).
\end{split}
\end{equation*}

Applying Gronwall's inequality, we obtain
\begin{equation}\label{4.1.10}
J\left(u\right)+\left\|u\right\|_{s,2,0}^2<\left(J\left(u_0\right)+\left\|u_0\right\|_{s,2,0}^2\right)e^{-\delta_2 t},
\end{equation}
where
\begin{equation*}
-\delta_2:=\frac{q^-}{1+Cq^-}\left(\frac{C_\ast^{p+1}}{\left(q^-\right)^\frac{p+1}{q^-}}\left(\int_Qg\left(\left|D_s^Au\left(x,y\right)\right|\right)\left|D_s^Au\left(x,y\right)\right|\mathrm d\mu\right)^\frac{p+1-q^+}{q^+}-1\right).
\end{equation*}

Furthermore, by (\ref{2.1.4}), (\ref{inequality2}), (\ref{4.1.3}), (\ref{4.1.10}) and Lemma \ref{Inequality for J<M}, we get
\begin{equation*}
\begin{split}
&{\left\|u\right\|}_{p+1}\leq C_\ast\left[u\right]_{s,G}^A\leq C_\ast \max\left\{\left(\rho_{s,G}^A\right)^{q^-},\left(\rho_{s,G}^A\right)^{q^+}\right\}\\
&\leq C_\ast \max\left\{\left(\frac1{q^-}\int_Qg\left(\left|D_s^Au\left(x,y\right)\right|\right)\left|D_s^Au\left(x,y\right)\right|\mathrm d\mu\right)^{q^-},\right.\\
&\;\;\;\;\left.\left(\frac1{q^-}\int_Qg\left(\left|D_s^Au\left(x,y\right)\right|\right)\left|D_s^Au\left(x,y\right)\right|\mathrm d\mu\right)^{q^+}\right\}\\
&\leq C_\ast\left(\frac1{q^-}\int_Qg\left(\left|D_s^Au\left(x,y\right)\right|\right)\left|D_s^Au\left(x,y\right)\right|\mathrm d\mu\right)^{q^-}\\
&\leq C_\ast\left(\frac{q^+\left(p+1\right)}{q^-\left(p+1-q^+\right)}J\left(u\right)\right)^{q^-}\\
&<C_\ast\left(\frac{q^+\left(p+1\right)}{q^-\left(p+1-q^+\right)}\right)^{q^-}\left(J\left(u_0\right)+\left\|u_0\right\|_{s,2,0}^2\right)^{q^-}e^{-\delta_2 q^-t}.
\end{split}
\end{equation*}

(ii) Since the existence of the global solution $u(t)$ for (\ref{1.1}) has been proven, then we claim that $u\in W$ for $t>0$.  If the assertion of $u$ is false, take $t_0>0$ as the first time such that $I(u(t_0))=0$. By the definition of $M$, we get $J(u(t_0))\geq M$. But,
\begin{equation}\label{4.2.1}
0<J(u(t_0))=M-\int_0^{t_0}\left\|u_\tau\right\|_{s,2,0}^2\mathrm d\tau=d_1\leq M
\end{equation}
for any $t_0>0$.  Then, we get
\begin{equation}\label{4.2.2}
J(u(t_0))=M.
\end{equation}

From (\ref{4.2.1}) and (\ref{4.2.2}), we infer that
\begin{equation*}\label{4.2.3}
\int_0^{t_0}\left\|u_\tau\right\|_{s,2,0}^2\mathrm d\tau=0,
\end{equation*}
namely, $u_t\equiv0$ for $0\leq t\leq t_0$,  recalling (\ref{3.3.1}), which contradicts $I(u_0)>0$. Then we infer that $u\in W$ for $0<t<\infty$.

Taking $t_1>0$ as the initial time, we have $J\left(u\right)<M$ and $I\left(u\right)>0$ for $t>t_1$. The proof process similar to (i), we know that there exist constants $\varsigma_1>0$ and $\varsigma_2>0$ such that
\begin{equation*}
\begin{split}
&\left\|u\right\|_{s,2,0}^2<\left\|u\left(t_1\right)\right\|_{s,2,0}^2e^{-\varsigma_1\left(t-t_1\right)},\\
&J\left(u\right)+\left\|u\right\|_{s,2,0}^2<\left(J\left(u\left(t_1\right)\right)+\left\|u\left(t_1\right)\right\|_{s,2,0}^2\right)e^{-\varsigma_2\left(t-t_1\right)},\\
&{\left\|u\right\|}_{p+1}<C_\ast\left(\frac{q^+\left(p+1\right)}{q^-\left(p+1-q^+\right)}\right)^{q^-}\left(J\left(u\left(t_1\right)\right)+\left\|u\left(t_1\right)\right\|_{s,2,0}^2\right)^{q^-}e^{-\varsigma_2 q^-\left(t-t_1\right)},
\end{split}
\end{equation*}
where $t\in\left(t_1,+\infty\right)$.

(iii) We use $\omega\left(u_0\right)=\underset{t\geq0}\cap\overline{\left\{u\left(s\right):s\geq t\right\}}$ to represent the $\omega$-limit of $u_0$. From (\ref{3.1.4}) and (\ref{3.3.1}), we obtain
\begin{equation*}
  \left\|\omega\right\|_{s,2,0}^2<\left\|u_0\right\|_{s,2,0}^2\leq\lambda_{J\left(u_0\right)},\;J\left(\omega\right)\leq J\left(u_0\right),
\end{equation*}
for all $\omega\in\omega\left(u_0\right)$, which implies that $\omega\not\in\mathcal N^{J\left(u_0\right)}$ and $\omega\in J^{J\left(u_0\right)}$, that is, $\omega\not\in\mathcal N$. Then, $\omega\left(u_0\right)\cap\mathcal N=\varnothing$, which means $\omega\left(u_0\right)=\left\{0\right\}$. Therefore, $u\left(t\right)\rightarrow0$ as $t\rightarrow+\infty$.

Theorem \ref{theorem2} is proved.

\section{Blowup}\label{Blowup}

In this section, we prove that the solution blows up in finite time with the  sub-sharp-critical initial energy and sharp-critical initial energy, respectively.

\begin{lemma}\label{3.1}
If $u(t)$ is a weak solution of (\ref{1.1}) with $J\left(u_0\right)<d$ and $u_0\in V$, then
\begin{equation*}\label{3.29}
\int_Q g \left( {\left| {D_s^Au\left( {x,y} \right)} \right|} \right)\left| {D_s^Au\left( {x,y} \right)} \right|{\rm{d}}\mu > \frac{{{q^ + }\left( {p + 1} \right)}}{{p + 1 - {q^ + }}}M.
\end{equation*}
\end{lemma}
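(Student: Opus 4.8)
The plan is to exploit the fact that $u_0 \in V$, i.e., $J(u_0) < d$ and $I(u_0) < 0$, and then propagate the sign of $I$ forward in time and combine it with the energy identity. First I would show that $I(u(t)) < 0$ for all $t$ in the existence interval. This is the invariance of the set $V$ under the flow: if not, let $t_0$ be the first time $I(u(t_0)) = 0$; since $I(u(t)) < 0$ on $[0,t_0)$ we have $\int_Q g(|D^A_s u(x,y)|)|D^A_s u(x,y)|\,\mathrm d\mu \neq 0$ on that interval, and by the energy identity $(\ref{Weak solution equation2})$, $J(u(t_0)) \le J(u_0) < d$. But if $u(t_0) \ne 0$ then $I(u(t_0)) = 0$ forces $u(t_0) \in \mathcal N$, so $J(u(t_0)) \ge d$, a contradiction; and $u(t_0) = 0$ is ruled out because, by Lemma \ref{Relations between I and U}(ii) applied on $[0,t_0)$, $\int_Q g(|D^A_s u(x,y)|)|D^A_s u(x,y)|\,\mathrm d\mu > h(1)$ there, so by continuity this quantity cannot drop to $0$ at $t_0$ (one uses that $t\mapsto \int_Q g(|D^A_s u|)|D^A_s u|\,\mathrm d\mu$ is continuous along the solution, which follows from $u \in C([0,T];H^s_0)$ together with the regularity in the definition of weak solution). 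Hence $I(u(t)) < 0$ for all $t \in [0,T)$.

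Next I would derive the quantitative lower bound. Since $I(u(t)) < 0$, Lemma \ref{Relations between I and U}(ii) gives immediately
\begin{equation*}
\int_Q g\left(\left|D^A_s u(x,y)\right|\right)\left|D^A_s u(x,y)\right|\,\mathrm d\mu > h(1).
\end{equation*}
This is not yet the claimed bound $\frac{q^+(p+1)}{p+1-q^+}M$, so the point is to improve it. I would use the energy identity and the definition of $J$: from $(\ref{Weak solution equation2})$, $J(u(t)) \le J(u_0)$, but more usefully I would use the structure
\begin{equation*}
J(u) = \rho^A_{s,G}(u) - \frac{1}{p+1}\int_\Omega |u|^{p+1}\,\mathrm dx \ge \frac{1}{q^+}\int_Q g(|D^A_s u|)|D^A_s u|\,\mathrm d\mu - \frac{1}{p+1}\int_\Omega |u|^{p+1}\,\mathrm dx,
\end{equation*}
and since $I(u) < 0$ means $\int_\Omega |u|^{p+1}\,\mathrm dx > \int_Q g(|D^A_s u|)|D^A_s u|\,\mathrm d\mu$, I would instead bound $J$ from below along the ``Nehari-type'' direction. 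The cleaner route: recall $M = \left(\frac1{q^+}-\frac1{p+1}\right)\min\{h(1),1\}$ and from Lemma \ref{Inequality for J<M}'s proof technique, the relation $d \ge M$ (Lemma \ref{Depth d}) together with $d(1)\ge M$. Actually the sharp way is to note that $J(u(t)) \le J(u_0) < d$, hence $J(u(t)) < d$; combining this with $I(u(t))<0$ places $u(t)$ in $V$, and then I would use Lemma \ref{d properties} or a direct estimate. Concretely, writing $a := \int_Q g(|D^A_s u|)|D^A_s u|\,\mathrm d\mu$, from $J(u) < d$, $I(u) < 0$, and the embedding inequality $(\ref{inequality2})$, one gets a strict inequality forcing $a$ to exceed the value at which $\left(\frac1{q^+}-\frac1{p+1}\right)a$ would equal $M$; since $\left(\frac1{q^+}-\frac1{p+1}\right)\cdot\frac{q^+(p+1)}{p+1-q^+} = 1$, the threshold is exactly $\frac{q^+(p+1)}{p+1-q^+}M$.

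The main obstacle I anticipate is the second step — passing from the soft bound $a > h(1)$ to the sharp bound $a > \frac{q^+(p+1)}{p+1-q^+}M$ — because it requires carefully combining the energy inequality $J(u) < d$ with the $N$-function convexity estimates $(\ref{2.1.4})$--$(\ref{2.1.7})$ and the definition of $M$; in particular one must be attentive to whether the relevant power of $a$ is governed by $q^-$ or $q^+$ (i.e., whether $a \gtrless 1$), which is why the hypothesis organizes $M$ with $\min\{h(1),1\}$. A secondary technical point is justifying the continuity of $t \mapsto a(t)$ needed for the invariance argument, which should follow from the regularity class in Definition \ref{Weak solution} plus dominated convergence, but deserves an explicit sentence. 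Once these are in place, the conclusion is immediate.
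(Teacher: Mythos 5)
Your first step (invariance of the sign of $I$ along the flow) and your derivation of the bound $\int_Q g(|D_s^Au|)\,|D_s^Au|\,\mathrm d\mu > h(1)$ from Lemma \ref{Relations between I and U}(ii) with $\delta=1$ are exactly what the paper does; the paper simply cites Proposition \ref{proposition} for the invariance $u(t)\in V_\delta$ rather than re-running the continuity argument, and your continuity argument is a reasonable substitute.

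The problem is your second step. You declare that the bound $a>h(1)$ ``is not yet the claimed bound'' and that the main obstacle is to improve it by combining the energy identity, the estimate $J(u)<d$, and the $N$-function inequalities --- and you never actually carry this out. But no improvement is needed: by the definition given just before Lemma \ref{Depth d}, $M=\left(\frac1{q^+}-\frac1{p+1}\right)\min\{h(1),1\}$, so
\begin{equation*}
\frac{q^+(p+1)}{p+1-q^+}\,M=\min\{h(1),1\}\le h(1)<\int_Q g\left(\left|D_s^Au\right|\right)\left|D_s^Au\right|\mathrm d\mu,
\end{equation*}
which is the assertion of the lemma. In other words, the ``sharp'' bound you were trying to reach is actually weaker than (or equal to) the ``soft'' bound you already had, precisely because $M$ is built from $\min\{h(1),1\}$; this is also why the $q^-$ versus $q^+$ case distinction you worry about is already absorbed into the definition of $h$. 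The proposed detour through $J(u)<d$, Lemma \ref{d properties}, and the embedding constant is unnecessary and, as written, does not constitute a proof. Replacing that paragraph with the one-line computation above completes the argument and reproduces the paper's proof.
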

\begin{proof}
Assume that $u(t)$ is a weak solution of (\ref{1.1}) with $J(u_0)<d$ and $I(u_0)<0$, $T$ is the maximal existence time. By Proposition \ref{proposition},  we can know that $u(x,t)\in V_\delta$, namely, $I(u)<0$ for $0<t<T$. Coming back to Lemma \ref{Relations between I and U} $\mathrm{(ii)}$, take $\delta=1$, we obtain
\begin{equation*}
\int_Q g \left( {\left| {D_s^Au\left( {x,y} \right)} \right|} \right)\left| {D_s^Au\left( {x,y} \right)} \right|{\rm{d}}\mu  > \min\left\{\left(\frac{\left(q^-\right)^\frac{p+1}{q^-}}{C_\ast^{p+1}}\right)^\frac{q^-}{p+1-q^-},\left(\frac{\left(q^-\right)^\frac{p+1}{q^-}}{C_\ast^{p+1}}\right)^\frac{q^+}{p+1-q^+}\right\}.
\end{equation*}
From Lemma \ref{Depth d}, we have
\begin{equation*}\label{3.30}
M < \frac{{p + 1 - {q^ + }}}{{{q^ + }\left( {p + 1} \right)}}\int_Q g \left( {\left| {D_s^Au\left( {x,y} \right)} \right|} \right)\left| {D_s^Au\left( {x,y} \right)} \right|{\rm{d}}\mu .
\end{equation*}
\end{proof}
\textbf{Proof of Theorem \ref{theorem3}.}

(i) We shall prove $u(t)$ blowing up in finite time with $u_0\in V$.

Arguing by contradiction, assume that the solution is global in time. Taking into account $u_0\in V$ and Proposition \ref{proposition}, we obtain $u\in V_\delta$ for $t\in [0, +\infty)$. We define a functional as follows
\begin{equation*}\label{3.31}
F\left(t\right):=\int_0^t\left\|u\right\|_{s,2,0}^2\mathrm d\tau+\left(T_0-t\right)\left\|u_0\right\|_{s,2,0}^2,\;t\in\left[0,T_0\right],
\end{equation*}
where $0<T_0<+\infty$. Obviously, for any $t\in [0,T_0]$ we get $F(t)>0$. Applying the continuity of $F(t)$ with respect to $t$, we infer that there exists a constant $\theta>0$, for $t\in [0,T_0]$ such that $F(t)\geq\theta$. Then
\begin{equation}\label{3.32}
F'\left(t\right)=\left\|u\right\|_{s,2,0}^2-\left\|u_0\right\|_{s,2,0}^2=2\Re\left[\int_0^t\int_\Omega u\overline{u_\tau}\mathrm dx\mathrm d\tau+\int_0^t\int_QD_su\overline{D_su_\tau}\mathrm dx\mathrm d\tau\right],
\end{equation}
and (\ref{3.3.1}) gives
\begin{equation}\label{3.33}
F''\left(t\right)=2\Re\left[\int_\Omega u\overline{u_\tau}\mathrm dx+\int_QD_su\overline{D_su_\tau}\mathrm d\mu\right]=-2I\left(u\right).
\end{equation}
By (\ref{3.32}) and Cauchy-Schwarz inequality,  we reach
\begin{equation}\label{3.34}
\begin{split}
&\left(F'\left(t\right)\right)^2=4\left\{\Re\left[\int_0^t\int_\Omega u\overline{u_\tau}\operatorname dx\mathrm d\tau+\int_0^t\int_QD_su\overline{D_su_\tau}\operatorname dx\mathrm d\tau\right]\right\}^2\\
&\;\;\;\;\;\;\;\;\;\;\;\;\;=4\left\{\left\{\Re\left[\int_0^t\int_\Omega u\overline{u_\tau}\mathrm dx\mathrm d\tau\right]\right\}^2+\left\{\Re\left[\int_0^t\int_QD_su\overline{D_su_\tau}\mathrm dx\mathrm d\tau\right]\right\}^2\right.\\
&\;\;\;\;\;\;\;\;\;\;\;\;\;\;\;\;\;\left.+2\left\{\Re\left[\int_0^t\int_\Omega u\overline{u_\tau}\mathrm dx\mathrm d\tau\right]\right\}\left\{\Re\left[\int_0^t\int_QD_su\overline{D_su_\tau}\mathrm dx\mathrm d\tau\right]\right\}\right\}\\
&\;\;\;\;\;\;\;\;\;\;\;\;\;\;\leq4\left[\int_0^t\left\|u\right\|_2^2\mathrm d\tau\int_0^t\left\|u_\tau\right\|_2^2\mathrm d\tau+\int_0^t\left[u\right]_{s,2}^2\mathrm d\tau\int_0^t\left[u_\tau\right]_{s,2}^2\mathrm d\tau\right.\\
&\;\;\;\;\;\;\;\;\;\;\;\;\;\;\;\;\;\;\left.+2\left(\int_0^t\left\|u\right\|_2^2\mathrm d\tau\right)^\frac12\left(\int_0^t\left\|u_\tau\right\|_2^2\mathrm d\tau\right)^\frac12\left(\int_0^t\left[u\right]_{s,2}^2\right)^\frac12\left(\int_0^t\left[u_\tau\right]_{s,2}^2\mathrm d\tau\right)^\frac12\right]\\
&\;\;\;\;\;\;\;\;\;\;\;\;\;\;\leq4\left[\int_0^t\left\|u\right\|_2^2\mathrm d\tau\int_0^t\left\|u_\tau\right\|_2^2\mathrm d\tau+\int_0^t\left[u\right]_{s,2}^2\mathrm d\tau\int_0^t\left[u_\tau\right]_{s,2}^2\mathrm d\tau\right.\\
&\;\;\;\;\;\;\;\;\;\;\;\;\;\;\;\;\;\;\left.+\int_0^t\left\|u\right\|_2^2\mathrm d\tau\int_0^t\left[u_\tau\right]_{s,2}^2\mathrm d\tau+\int_0^t\left[u\right]_{s,2}^2\mathrm d\tau\int_0^t\left\|u_\tau\right\|_2^2\mathrm d\tau\right]\\
&\;\;\;\;\;\;\;\;\;\;\;\;\;\;\leq4\left(\int_0^t\left\|u\right\|_2^2\mathrm d\tau+\int_0^t\left[u\right]_{s,2}^2\mathrm d\tau\right)\left(\int_0^t\left\|u_\tau\right\|_2^2\mathrm d\tau+\int_0^t\left[u_\tau\right]_{s,2}^2\mathrm d\tau\right)\\
&\;\;\;\;\;\;\;\;\;\;\;\;\;\;\leq4\int_0^t\left\|u\right\|_{s,2,0}^2\mathrm d\tau\int_0^t\left\|u_\tau\right\|_{s,2,0}^2\mathrm d\tau\leq4F\left(t\right)\int_0^t\left\|u_\tau\right\|_{s,2,0}^2\mathrm d\tau.
\end{split}
\end{equation}
Considering (\ref{3.33}) and (\ref{3.34}),  it is easy to see that
\begin{equation*}\label{3.35}
\begin{split}
&F''\left(t\right)F\left(t\right)-\frac{p+3}4\left(F'\left(t\right)\right)^2\geq F\left(t\right)\left(F''\left(t\right)-\left(p+3\right)\int_0^t\left\|u_\tau\right\|_{s,2,0}^2\mathrm d\tau\right)\\
&\;\;\;\;\;\;\;\;\;\;\;\;\;\;\;\;\;\;\;\;\;\;\;\;\;\;\;\;\;\;\;\;\;\;\;\;\;\;\;\;\;\;\;=F\left(t\right)\left(-2I\left(u\right)-\left(p+3\right)\int_0^t\left\|u_\tau\right\|_{s,2,0}^2\mathrm d\tau\right).
\end{split}
\end{equation*}
Let
\begin{equation*}\label{3.36}
\xi\left(t\right)=-2I\left(u\right)-\left(p+3\right)\int_0^t\left\|u_\tau\right\|_{s,2,0}^2\mathrm d\tau.
\end{equation*}
Using (\ref{2.1.4}),  it is easy to see that
\begin{equation}\label{3.37}
\begin{split}
& J\left(u\right)\geq\frac1{q^+}\int_Qg\left(\left|D_s^Au\left(x,y\right)\right|\right)\left|D_s^Au\left(x,y\right)\right|\mathrm d\mu-\frac1{p+1}\int_\Omega\left|u\right|^{p+1}\mathrm dx\\
&\;\;\;\;\;\;\;\geq\left(\frac1{q^+}-\frac1{p+1}\right)\int_Qg\left(\left|D_s^Au\left(x,y\right)\right|\right)\left|D_s^Au\left(x,y\right)\right|\mathrm d\mu+\frac1{p+1}I\left(u\right).
\end{split}
\end{equation}
Hence, from (\ref{Weak solution equation2}) and (\ref{3.37}), we can have
\begin{equation*}\label{3.38}
\begin{split}
&\xi\left(t\right)\geq2\left(\frac{p+1}{q^+}-1\right)\int_Qg\left(\left|D_s^Au\left(x,y\right)\right|\right)\left|D_s^Au\left(x,y\right)\right|\mathrm d\mu\\
&\;\;\;\;\;\;\;\;\;\;\;-2\left(p+1\right)J\left(u_0\right)+\left(p-1\right)\int_0^t\left\|u_\tau\right\|_{s,2,0}^2\mathrm d\tau.
\end{split}
\end{equation*}

Now, we shall discuss the following two cases.

Case 1: $0<J(u_0)<M$. By Lemma \ref{3.1}, we find
\begin{equation}\label{3.39}
\xi\left(t\right)\geq2\left(p+1\right)M-2\left(p+1\right)J\left(u_0\right)+\left(p-1\right)\int_0^t\left\|u_\tau\right\|_{s,2,0}^2\mathrm d\tau=\rho>0.
\end{equation}
Then
\begin{equation*}\label{3.40}
F''\left(t\right)F\left(t\right)-\frac{p+3}4\left(F'\left(t\right)\right)^2\geq\theta\rho>0,\;t\in\left[0,T_0\right],
\end{equation*}
which yields
\begin{equation*}\label{3.41}
\left(F^{-\vartheta}\left(t\right)\right)''=-\frac\vartheta{F^{\vartheta+2}\left(t\right)}\left(F''\left(t\right)F\left(t\right)-\left(\vartheta+1\right)\left(F'\left(t\right)\right)^2\right)<0,\;\vartheta=\frac{p-1}4.
\end{equation*}
Similar to the proof of \cite[Theorem 4.3]{Payne}, there exists a $T>0$ such that
\begin{equation*}\label{3.42}
\lim_{t\rightarrow T}F^{-\vartheta}\left(t\right)=0,
\end{equation*}
and
\begin{equation*}\label{3.43}
\lim_{t\rightarrow T}F\left(t\right)=+\infty,
\end{equation*}
which is a contradiction with $T=+\infty$.

Case 2: $J(u_0)\leq0$. It is easy to get (\ref{3.39}) directly. The rest is similar to the proof of Case 1.

(ii) By the continuity of $J(u)$ and $I(u)$ with respect to $t$, there exists a sufficiently small $t_0>0$ such that $J(u(t_0))>0$ and $I(u(t_0))<0$ for $J(u_0)=M>0$ and $I(u_0)<0$. From (\ref{3.33}),  for $0<t\leq t_0$, we can  see that $u_t\neq0$ for $0<t\leq t_0$.  Hence, we
obtain
\begin{equation*}\label{4.9}
J(u(t_0))=M-\int_0^{t_0}\left\|u_\tau\right\|_{s,2,0}^2\mathrm d\tau=d_0<M.
\end{equation*}
Taking $t=t_0$ as the initial time and recalling Proposition \ref{proposition}, we get $u(x,t)\in V$ for $t>t_0$. The rest is proved similar to (i).

The proof of Theorem \ref{theorem3} is complete.

\section{Convergence relations}\label{some convergence relations}
In this section, we prove the convergence relationship between the global solution of (\ref{1.1}) and the corresponding ground state solution.

The following lemma will play an important role in the proof of Theorem \ref{theorem4}.
\begin{lemma}[\cite{Pablo}]\label{Inequality for N-functions}
Let $G$ is an $N$-function that satisfies (\ref{2.1.4}), (\ref{2.1.5}), and $\left(H_1\right)$, then there exists a constant $C>0$, such that for all $a,b\in R^N$, we have
\begin{equation*}
  \left\langle\frac{g\left(\left|a\right|\right)}{\left|a\right|}a-\frac{g\left(\left|b\right|\right)}{\left|b\right|}b,a-b\right\rangle\geq CG\left(\left|a-b\right|\right).
\end{equation*}
\end{lemma}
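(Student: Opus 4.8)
The plan is to reduce the vector inequality to a one-variable inequality by exploiting homogeneity and rotational symmetry, and then to treat it as a comparison between the ``monotonicity defect'' of the map $a\mapsto \frac{g(|a|)}{|a|}a$ and the $N$-function $G(|a-b|)$. First I would record the elementary fact that the map $\Psi(a):=\frac{g(|a|)}{|a|}a$ is precisely the gradient of the radial convex function $a\mapsto G(|a|)$ on $\mathbb R^N$; indeed $\nabla\big(G(|a|)\big)=g(|a|)\frac{a}{|a|}=\Psi(a)$. Hence $\langle \Psi(a)-\Psi(b),a-b\rangle$ is the standard Bregman-type monotonicity quantity for the convex potential $G(|\cdot|)$, and assumption $(H_3)$ (convexity of $t\mapsto G(\sqrt t)$) together with $(\ref{2.1.5})$ — which controls $tg'(t)/g(t)$ and hence makes $G(|\cdot|)$ uniformly convex in the appropriate sense — is exactly what is needed to bound it below.

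The key steps, in order, would be: (1) reduce to $b\neq 0$ and $a\neq b$ (the degenerate cases are trivial or follow by continuity). (2) Use the rotational invariance of both sides under $O(N)$ to assume $b=|b|e_1$ and $a$ lies in the plane spanned by $e_1,e_2$, reducing to $N=2$; then use the positive $1$-homogeneity structure in $(a,b)\mapsto$ (both sides scale: replacing $(a,b)$ by $(\lambda a,\lambda b)$ multiplies the left side by a factor governed by $(\ref{2.1.6})$ and the right side by a factor governed by $(\ref{2.1.7})$), so that after normalizing we may assume, say, $|b|=1$. (3) Write $a-b = \int_0^1 \frac{d}{dt}\big(b+t(a-b)\big)\,dt$ is not quite it; rather, use the integral representation
\begin{equation*}
\langle \Psi(a)-\Psi(b),a-b\rangle=\int_0^1 \big\langle D\Psi\big(b+t(a-b)\big)(a-b),\,a-b\big\rangle\,dt,
\end{equation*}
compute $D\Psi(\xi)=\frac{g(|\xi|)}{|\xi|}\mathrm{Id}+\Big(g'(|\xi|)-\frac{g(|\xi|)}{|\xi|}\Big)\frac{\xi\otimes\xi}{|\xi|^2}$, and note its eigenvalues are $g'(|\xi|)$ (eigenvector $\xi$) and $\frac{g(|\xi|)}{|\xi|}$ (orthogonal complement), both positive and, by $(\ref{2.1.5})$, comparable: $\min\{g'(|\xi|),\frac{g(|\xi|)}{|\xi|}\}\ge c\,\frac{g(|\xi|)}{|\xi|}$ for a fixed $c=c(q^-,q^+)>0$. (4) Therefore the integrand is $\ge c\,\frac{g(|\xi_t|)}{|\xi_t|}|a-b|^2$ with $\xi_t=b+t(a-b)$, and it remains to show $\int_0^1 \frac{g(|\xi_t|)}{|\xi_t|}\,dt\,|a-b|^2 \ge C\,G(|a-b|)$. (5) For this last scalar inequality, one uses that $|\xi_t|\ge |a-b|\cdot|t-t_0|$ where $t_0$ is the (possibly exterior) point on the line closest projection parameter, combined with $(\ref{2.1.6})$–$(\ref{2.1.7})$ to compare $\frac{g(r)}{r}r^2 = r\,g(r)\sim G(r)$; splitting the integral according to whether $|a-b|$ is large or small and using the two-sided bounds $(\ref{2.1.7})$ gives the claimed lower bound $C\,G(|a-b|)$.

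The main obstacle I expect is step (5) — the passage from the pointwise lower bound on $D\Psi$ to the global bound $C\,G(|a-b|)$ — because it genuinely requires the interplay of $(H_3)$ with $(\ref{2.1.5})$: the eigenvalue estimate alone gives $\langle\Psi(a)-\Psi(b),a-b\rangle \gtrsim \int_0^1 g(|\xi_t|)|\xi_t|^{-1}dt\,|a-b|^2$, and one must carefully check that this integral, which can be small when the segment $[b,a]$ passes near the origin, is still bounded below by $G(|a-b|)$ up to a constant; the worst case is $b$ and $a$ nearly antipodal with $|b|\approx|a|$, and here convexity of $t\mapsto G(\sqrt t)$ (equivalently a lower bound on $g'$ forcing $G$ to be at least quadratic near $0$ in the relevant sense) is what saves the estimate. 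Since the statement is quoted from \cite{Pablo}, I would in fact invoke that reference for the delicate scalar estimate and only sketch steps (1)–(4), which are the structural reductions; but I would present the eigenvalue computation in step (3) explicitly since it is the conceptual heart and makes the role of hypotheses $(\ref{2.1.4})$ and $(\ref{2.1.5})$ transparent.
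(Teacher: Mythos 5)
The paper does not prove this lemma at all: it is quoted verbatim from \cite{Pablo} and used as a black box in the proof of Theorem \ref{theorem4}. So there is no in-paper argument to compare against; what follows is an assessment of your sketch on its own merits.

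The conceptual core of your plan — steps (3)--(4), writing $\Psi(a)=\nabla\big(G(|\cdot|)\big)(a)$, using the integral representation $\langle\Psi(a)-\Psi(b),a-b\rangle=\int_0^1\langle D\Psi(\xi_t)(a-b),a-b\rangle\,dt$ with $\xi_t=b+t(a-b)$, and reading off the eigenvalues $g'(|\xi|)$ and $g(|\xi|)/|\xi|$ of $D\Psi(\xi)$ so that (\ref{2.1.5}) gives the pointwise bound $\langle D\Psi(\xi)v,v\rangle\geq\min\{1,q^--1\}\frac{g(|\xi|)}{|\xi|}|v|^2$ — is correct and is the standard route. Two remarks on the rest. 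First, the normalization in step (2) does not work for a general $N$-function: replacing $(a,b)$ by $(\lambda a,\lambda b)$ scales the left side by a factor controlled only between $\lambda^{q^-}$ and $\lambda^{q^+}$ via (\ref{2.1.6}) and the right side likewise via (\ref{2.1.7}), and these windows do not match, so "assume $|b|=1$" costs an unbounded factor unless $q^-=q^+$. Fortunately this reduction is unnecessary and should simply be dropped. Second, the step you flag as the main obstacle, (5), is in fact the easy part once one observes that $\left(H_3\right)$ is \emph{equivalent} to $r\mapsto g(r)/r$ being nondecreasing (differentiate $G(\sqrt t)$). Since $|\xi_1-\xi_0|=|a-b|$, at least one endpoint of the segment lies at distance $\geq|a-b|/2$ from the origin, hence $|\xi_t|\geq|a-b|/4$ on a subinterval of length $1/4$; monotonicity of $g(r)/r$ then gives
\begin{equation*}
\int_0^1\frac{g(|\xi_t|)}{|\xi_t|}\,dt\,|a-b|^2\;\geq\;\frac14\,\frac{g(|a-b|/4)}{|a-b|/4}\,|a-b|^2\;=\;4\,g\!\left(\tfrac{|a-b|}4\right)\tfrac{|a-b|}4\;\geq\;4q^-\,G\!\left(\tfrac{|a-b|}4\right)\;\geq\;4^{1-q^+}q^-\,G(|a-b|),
\end{equation*}
using (\ref{2.1.4}) and (\ref{2.1.7}). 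No case-splitting on the size of $|a-b|$ is needed. One genuine point you raise implicitly and should make explicit: the lemma as stated lists only (\ref{2.1.4}), (\ref{2.1.5}) and $\left(H_1\right)$, but $\left(H_3\right)$ is indispensable — for $G(t)=t^p$ with $1<p<2$ the inequality fails (take $|a-b|\ll|a|$, where the left side is of order $|a|^{p-2}|a-b|^2\ll|a-b|^p$) — so the hypothesis $\left(H_3\right)$, a standing assumption of the paper, must be understood as part of the lemma.
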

\textbf{Proof of Theorem \ref{theorem4}.}
Let $u=u(t)$ be a global solution of (\ref{1.1}). From (\ref{Weak solution equation2}) there holds $J\left(u\right)\leq J\left(u_0\right)$ for $t\in\lbrack0,+\infty)$.

(i) We assert that if $t\in\lbrack0,+\infty)$, then $J\left(u\right)\geq0$.

Assuming that the assertion is not tenable, then there exists a $t_0>0$ such that $J\left(u\left(t_0\right)\right)<0$. In fact, according to the definitions of $I$ and $J$, we have
\begin{equation*}
\begin{split}
&J\left(u\right)\geq\frac1{q^+}\int_Qg\left(\left|D_s^Au\left(x,y\right)\right|\right)\left|D_s^Au\left(x,y\right)\right|\mathrm d\mu-\frac1{p+1}\int_\Omega\left|u\right|^{p+1}\mathrm dx\\
&\;\;\;\;\;\;\;=\frac1{q^+}I\left(u\right)+\left(\frac1{q^+}-\frac1{p+1}\right)\int_\Omega\left|u\right|^{p+1}\mathrm dx,
\end{split}
\end{equation*}
which shows that $I\left(u\left(t_0\right)\right)<0$. Thus $u$ will blow up in finite time. This contradicts with the assumption that $u$ is global solution. Then
\begin{equation*}
0\leq J(u)\leq J(u_0),\;t\in\lbrack0,+\infty).
\end{equation*}

From (\ref{Weak solution equation2}), we deduce
\begin{equation*}
\frac d{dt}J\left(u\right)=-\left\|u_t\right\|_{s,2,0}^2,
\end{equation*}
which  implies $J\left(u\right)$ is decreasing with respect to $t$, that is,
\begin{equation*}
\underset{t\rightarrow+\infty}{\mathrm{lim}}J\left(u\left(t\right)\right)=C,
\end{equation*}
where $C\in\lbrack0,\;J\left(u_0\right)\rbrack$. Letting $t\rightarrow+\infty$ in (\ref{Weak solution equation2}), we obtain
\begin{equation}\label{8.12}
\int_0^{+\infty}\left(\left\|u_t\right\|_2^2+\left[u_t\right]_{s,2}^2\right)\mathrm dt=J\left(u_0\right)-C\leq J\left(u_0\right).
\end{equation}

From (\ref{8.12}), there exists an increasing sequence $\left\{t_k\right\}_{k=1}^\infty$ with $t_k\rightarrow+\infty$ as $k\rightarrow\infty$ such that
\begin{equation}\label{8.13}
\underset{k\rightarrow\infty}{\mathrm{lim}}\left\|u_t\left(t_k\right)\right\|_2^2+\left[u_t\left(t_k\right)\right]_{s,2}^2=0.
\end{equation}

On the other hand, from (\ref{Weak solution equation}), for any $\varphi\in W_{A,0}^{s,G}\left(\Omega,\mathbb{C}\right)$, there holds
\begin{equation*}
\begin{split}
&\left\langle J'\left(u\right),\varphi\right\rangle=\frac d{d\tau}J\left(u+\tau\varphi\right)\\
&=\Re\left[\int_Q\frac{g\left(\left|D_s^Au\left(x,y\right)\right|\right)}{\left|D_s^Au\left(x,y\right)\right|}D_s^Au\left(x,y\right)\overline{D_s^A\varphi\left(x,y\right)}\mathrm d\mu-\int_\Omega\left|u\right|^{p-1}u\overline\varphi\operatorname dx\right]\\
&=\Re\left[-\int_\Omega u_{\mathrm t}\overline\varphi\mathrm dx-\int_QD_su_t\overline{D_s\varphi}\mathrm d\mu\right].\\
\end{split}
\end{equation*}

Applying the Sobolev imbedding inequality and the $\mathrm H\ddot{\mathrm o}\mathrm{lder}$ inequality, taking into account (\ref{8.13}), we obtain that as $k\rightarrow\infty$,
\begin{equation}\label{8.14}
\begin{split}
&{\left\|J'\left(u\left(t_k\right)\right)\right\|}_{-s,G^\ast,0}^A=\sup_{\left\|\varphi\right\|_{s,G,0}^A\leq1}\left|\left\langle J'\left(u\left(t_k\right)\right),\varphi\right\rangle\right|\\
&\;\;\;\;\;\;\;\;\;\;\;\;\;\;\;\;\;\;\;\;\;\;\;\;\;\;\;\leq\sup_{\left\|\varphi\right\|_{s,G,0}^A\leq1}\left|\left\langle-u_t\left(t_k\right)-\left(-\Delta\right)^su_t\left(t_k\right),\varphi\right\rangle\right|\\
&\;\;\;\;\;\;\;\;\;\;\;\;\;\;\;\;\;\;\;\;\;\;\;\;\;\;\;\leq\sup_{\left\|\varphi\right\|_{s,G,0}^A\leq1}\left[{\left\|u_t\left(t_k\right)\right\|}_2{\left\|\varphi\right\|}_2+{\left[u_t\left(t_k\right)\right]}_{s,2}{\left[\varphi\right]}_{s,2}\right]\\
&\;\;\;\;\;\;\;\;\;\;\;\;\;\;\;\;\;\;\;\;\;\;\;\;\;\;\;\leq C\left[{\left\|u_t\left(t_k\right)\right\|}_2+{\left[u_t\left(t_k\right)\right]}_{s,2}\right]\rightarrow0.
\end{split}
\end{equation}
Combining (\ref{2.22}) and (\ref{8.14}), we can get
\begin{equation*}
\begin{split}
&\frac1{p+1}\left|I\left(u\left(t_k\right)\right)\right|=\frac1{p+1}\left|\left\langle J'\left(u\left(t_k\right)\right),u\left(t_k\right)\right\rangle\right|\\
&\;\;\;\;\;\;\;\;\;\;\;\;\;\;\;\;\;\;\;\;\;\;\;\;\leq C{\left\|J'\left(u\left(t_k\right)\right)\right\|}_{-s,G^\ast,0}^A{\left\|u\left(t_k\right)\right\|}_{s,G,0}^A\leq C{\left\|u\left(t_k\right)\right\|}_{s,G,0}^A.
\end{split}
\end{equation*}
Applying the above inequality and considering (\ref{Weak solution equation2}),  we find that
\begin{equation*}
\begin{split}
&J\left(u_0\right)+C\left\|u\left(t_k\right)\right\|_{s,G,0}^A\geq J\left(u\left(t_k\right)\right)-\frac{I\left(u\left(t_k\right)\right)}{p+1}\\
&\;\;\;\;\;\;\;\;\;\;\;\;\;\;\;\;\;\;\;\;\;\;\;\;\;\;\;\;\;\;\;\;\;\;\;\geq\frac{p+1-q^+}{q^+\left(p+1\right)}\int_Qg\left(\left|D_s^Au\left(t_k\right)\right|\right)\left|D_s^Au\left(t_k\right)\right|\mathrm d\mu,
\end{split}
\end{equation*}
which implies
\begin{equation}\label{8.15}
\frac{p+1-q^+}{q^+\left(p+1\right)}\int_Qg\left(\left|D_s^Au\left(t_k\right)\right|\right)\left|D_s^A\left(t_k\right)\right|\mathrm d\mu-C\left\|u\left(t_k\right)\right\|_{s,G,0}^A-J\left(u_0\right)\leq0.
\end{equation}

(ii) We assert that
\begin{equation}\label{8.21}
\left\|u\left(t_k\right)\right\|_{s,G,0}^A\leq C,\;k=1,2,\dots.
\end{equation}

Assuming that the assertion is not tenable,  then there is a sufficiently large $\int_Qg\left(\left|D_s^Au\left(t_k\right)\right|\right)\left|D_s^A\left(t_k\right)\right|\mathrm d\mu$ such that
\begin{equation*}
\frac{p+1-q^+}{q^+\left(p+1\right)}\int_Qg\left(\left|D_s^Au\left(t_k\right)\right|\right)\left|D_s^A\left(t_k\right)\right|\mathrm d\mu-C\left\|u\left(t_k\right)\right\|_{s,G,0}^A-J\left(u_0\right)>0,
\end{equation*}
which contradicts with (\ref{8.15}). Then $\int_Qg\left(\left|D_s^Au\left(t_k\right)\right|\right)\left|D_s^A\left(t_k\right)\right|\mathrm d\mu$ is bounded. By (\ref{2.1.4}), $\rho_{s,G}^A\left(u\right)$ is bounded. Hence, from Lemma \ref{inequality1}, (\ref{8.21}) has been proven.

Consequently, there exist $u^\ast\in W_{A,0}^{s,G}\left(\Omega\right)$ and an increasing subsequence of $\left\{t_k\right\}_{k=1}^\infty$, still denoted by $\left\{t_k\right\}_{k=1}^\infty$, for $k\rightarrow\infty$ such that
\begin{equation*}
\left\{\begin{split}
&u\left(t_k\right)\rightharpoonup u^\ast\;\mathrm{weakly}\;\mathrm{in}\;W_{A,0}^{s,G}\left(\Omega\right),\\
&u\left(t_k\right)\rightarrow u^\ast\;\mathrm{strongly}\;\mathrm{in}\;L^{p+1}\left(\Omega\right).
\end{split}\right.
\end{equation*}

(iii) We prove that $u_k\rightarrow u^\ast\;\mathrm{strongly}\;\mathrm{in}\;W_{A,0}^{s,G}\left(\Omega\right)$.

It is easy to know that
\begin{equation}\label{8.31}
\begin{split}
&\left\langle J'\left(u\left(t_k\right)\right),u\left(t_k\right)-u^\ast\right\rangle=\Re\left[\int_Q\frac{g\left(\left|D_s^Au\left(t_k\right)\right|\right)}{\left|D_s^Au\left(t_k\right)\right|}D_s^Au\left(t_k\right)\overline{D_s^A\left(u\left(t_k\right)-u^\ast\right)}\mathrm d\mu\right.\\
&\left.\;\;\;\;\;\;\;\;\;\;\;\;\;\;\;\;\;\;\;\;\;\;\;\;\;\;\;\;\;\;\;\;\;\;\;\;\;\;\;\;\;-\int_\Omega\left|u\left(t_k\right)\right|^{p-1}u\left(t_k\right)\overline{\left(u\left(t_k\right)-u^\ast\right)}\mathrm dx\right],\\
&\left\langle J'\left(u^\ast\right),u\left(t_k\right)-u^\ast\right\rangle=\Re\left[\int_Q\frac{g\left(\left|D_s^Au^\ast\right|\right)}{\left|D_s^Au^\ast\right|}D_s^Au^\ast\overline{D_s^A\left(u\left(t_k\right)-u^\ast\right)}\mathrm d\mu\right.\\
&\left.\;\;\;\;\;\;\;\;\;\;\;\;\;\;\;\;\;\;\;\;\;\;\;\;\;\;\;\;\;\;\;\;\;\;\;\;\;\;-\int_\Omega\left|u^\ast\right|^{p-1}u^\ast\overline{\left(u\left(t_k\right)-u^\ast\right)}\mathrm dx\right].
\end{split}
\end{equation}

Considering (\ref{8.14}) and (\ref{8.21}), we deduce that as $k\rightarrow\infty$,
\begin{equation}\label{8.32}
\begin{split}
&\left|\left\langle J'\left(u\left(t_k\right)\right),u\left(t_k\right)-u^\ast\right\rangle\right|\leq{\left\|J'\left(u\left(t_k\right)\right)\right\|}_{-s,G^\ast,0}^A\left({\left\|u\left(t_k\right)\right\|}_{s,G,0}^A+{\left\|u^\ast\right\|}_{s,G,0}^A\right)\\
&\;\;\;\;\;\;\;\;\;\;\;\;\;\;\;\;\;\;\;\;\;\;\;\;\;\;\;\;\;\;\;\;\;\;\;\;\;\;\leq\left(C+{\left\|u^\ast\right\|}_{s,G,0}^A\right){\left\|J'\left(u\left(t_k\right)\right)\right\|}_{-s,G^\ast,0}^A\rightarrow0.
\end{split}
\end{equation}
Due to $u_k\rightharpoonup u^\ast\;\mathrm{weakly}\;\mathrm{in}\;W_{A,0}^{s,G}\left(\Omega\right)$,  we get that as $k\rightarrow\infty$,
\begin{equation}\label{8.33}
\left\langle J'\left(u^\ast\right),u\left(t_k\right)-u^\ast\right\rangle\rightarrow 0.
\end{equation}
By (\ref{8.31})-(\ref{8.33}), we conclude that as $k\rightarrow\infty$,
\begin{equation}\label{8.34}
\begin{split}
&\left\langle J'\left(u\left(t_k\right)\right)-J'\left(u^\ast\right),u\left(t_k\right)-u^\ast\right\rangle\\
&=\Re\left[\int_Q\left(\frac{g\left(\left|D_s^Au\left(t_k\right)\right|\right)}{\left|D_s^Au\left(t_k\right)\right|}D_s^Au\left(t_k\right)-\frac{g\left(\left|D_s^Au^\ast\right|\right)}{\left|D_s^Au^\ast\right|}D_s^Au^\ast\right)\overline{D_s^A\left(u\left(t_k\right)-u^\ast\right)}\mathrm d\mu\right.\\
&\;\;\;\;\left.-\int_\Omega\left(\left|u\left(t_k\right)\right|^{p-1}u\left(t_k\right)-\left|u^\ast\right|^{p-1}u^\ast\right)\overline{\left(u\left(t_k\right)-u^\ast\right)}\operatorname dx\right]\rightarrow0.
\end{split}
\end{equation}
From the Sobolev imbedding inequality, the $\mathrm H\ddot{\mathrm o}\mathrm{lder}$ inequality and (\ref{8.21}), we have that as $k\rightarrow\infty$,
\begin{equation}\label{8.35}
\begin{split}
&\left|\int_\Omega\left(\left|u\left(t_k\right)\right|^{p-1}u\left(t_k\right)-\left|u^\ast\right|^{p-1}u^\ast\right)\overline{\left(u\left(t_k\right)-u^\ast\right)}\mathrm dx\right|\\
&\leq\left(\left\|u\left(t_k\right)\right\|_{p+1}^p+\left\|u^\ast\right\|_{p+1}^p\right){\left\|u\left(t_k\right)-u^\ast\right\|}_{p+1}\\
&\leq\left(C\left(\left\|u\left(t_k\right)\right\|_{s,G,0}^A\right)^p+\left\|u^\ast\right\|_{p+1}^p\right){\left\|u\left(t_k\right)-u^\ast\right\|}_{p+1}\\
&\leq\left(C+\left\|u^\ast\right\|_{p+1}^p\right){\left\|u\left(t_k\right)-u^\ast\right\|}_{p+1}\rightarrow0.
\end{split}
\end{equation}
By (\ref{8.34}) and (\ref{8.35}), we deduce that as $k\rightarrow\infty$,
\begin{equation*}
\Re\left[\int_Q\left(\frac{g\left(\left|D_s^Au\left(t_k\right)\right|\right)}{\left|D_s^Au\left(t_k\right)\right|}D_s^Au\left(t_k\right)-\frac{g\left(\left|D_s^Au^\ast\right|\right)}{\left|D_s^Au^\ast\right|}D_s^Au^\ast\right)\overline{D_s^A\left(u\left(t_k\right)-u^\ast\right)}\mathrm d\mu\right]\rightarrow0
\end{equation*}

From Lemma \ref{Inequality for N-functions}, we get
\begin{equation*}
\begin{split}
&\Re\left[\int_Q\left(\frac{g\left(\left|D_s^Au\left(t_k\right)\right|\right)}{\left|D_s^Au\left(t_k\right)\right|}D_s^Au\left(t_k\right)-\int_Q\frac{g\left(\left|D_s^Au^\ast\right|\right)}{\left|D_s^Au^\ast\right|}D_s^Au^\ast\right)\overline{D_s^A\left(u\left(t_k\right)-u^\ast\right)}\mathrm d\mu\right]\\
&\geq C\rho_G\left(D_s^A\left(u\left(t_k\right)-u^\ast\right)\right).
\end{split}
\end{equation*}

Hence, $\underset{k\rightarrow\infty}{\mathrm{lim}}\left[u\left(t_k\right)\right]_{s,G}^A=\left[u^\ast\right]_{s,G}^A$ and then by \cite[Proposition 3.32]{Brezis}, we have $u\left(t_k\right)\rightarrow u^\ast$ strongly in $W_{A,0}^{s,G}\left(\Omega\right).$

Therefore, together with (\ref{8.14}), $J'\left(u^\ast\right)=\underset{k\rightarrow\infty}{\mathrm{lim}}J'\left(u\left(t_k\right)\right)=0$, i.e., $u^\ast\in\Phi$.

The proof of Theorem \ref{theorem4} is complete.

\section{Conclusions}\label{Conclusions}
 In this paper, we study the existence and uniqueness of local solutions and global solutions. Also, we investigate the asymptotic behavior and blowup phenomenon of global solutions. Furthermore, we provide the convergence relationship between the ground state solutions and the global solutions.

Compared to previous researches on fractional $p$-Laplacian parabolic equations and pseudo-parabolic equations (e.g. \cite{Cheng1,Liao}), we replace the \emph{fractional $p$-Laplace operator} $\left(-\Delta_p\right)^s$ with the magnetic fraction $g$-Laplacian operator $\left(-\Delta_g^A\right)^s$ for the first time in pseudo-parabolic equations, and give a detailed introduction to the magnetic space.

The imbedding theorems play an important role in this paper. It is worth noting that $\left\|u\right\|_{s,G,0}^A$ is equivalent to $\left[u\right]_{s,G}^{A}$. The spatial region considered in this paper is $\Omega$, while the spatial region considered in \cite{Pablo} is $\mathbb{R}^N$, and the equivalence of $\left\|u\right\|_{s,G,0}^A$ and $\left[u\right]_{s,G}^{A}$ is not given in \cite{Pablo}. In this paper, we follow the steps of \cite{Bonder2}, which proved the Poincar$\acute{\mathrm{e}}$ inequality in $\Omega$, to conclude the equivalence of $\left\|u\right\|_{s,G,0}^A$ and $\left[u\right]_{s,G}^{A}$. In addition, due to the complexity of the definition of magnetic spaces,  various estimates are difficult. This paper overcomes these difficulties well.

In order to make this paper widely applicable, we define $u\left(x\right)$ in the complex field, provide the definition of weak solutions in the complex field and overcome various proof difficulties brought by the complex field.

Further, the pseudo-parabolic equations in \cite{Cheng,Cheng1,Cheng2,Wang} all contain the $\bbint_\Omega\left|u\right|^{q-1}$ $u\mathrm{d}x$ term. However, the value of $D_s^A1$ in this paper depends on $A\left(\frac{x+y}2\right)$ and is not necessarily equal to $0$, the Conservation Law in \cite[Lemma 2]{Wang} may not hold. Therefore, if the $\bbint_\Omega\left|u\right|^{q-1}u\mathrm{d}x$ term is added to the equation (\ref{1.1}), how to properly handle the computational difficulties caused by $\bbint_\Omega\left|u\right|^{q-1}u\mathrm{d}x$ is a major problem, and will be a direction for future works.



\section*{Statements and Declarations}

{\bf Availability of data and material:} Not applicable.

{\bf Competing interests:} The authors declare that they have no competing interests.

{\bf Funding:} This research was supported by the National Natural Science Foundation of China (No. 12071491).

{\bf Author's contributions:} The authors contributed equally to the writing of this paper. All authors read and approved the final manuscript.

{\bf Acknowledgments:} The first author was supported by the China Scholarship Council (No. 202306380199) and would like to thank Professor Peter C. Gibson and York University for her one-year visiting study at York University.




\bibliographystyle{bmc-mathphys}

\end{document}